\documentclass[11pt]{amsart}
\usepackage{amsfonts,amssymb,amsmath,amsthm}
\usepackage{url}
\usepackage{graphicx} 
\usepackage[all]{xy}

\usepackage{enumerate}
\usepackage{diagbox}
\usepackage{shuffle}

\usepackage{amsmath,amsfonts,amsthm,url,color,amssymb}
\usepackage{graphicx}
\usepackage{algpseudocode, algorithm}
\usepackage{hyperref}
\usepackage{todonotes}

\usepackage[norefs,nocites]{refcheck}

\newcommand{\Z}{\mathbb Z}

\newcommand{\gi}{$(G,*)$}

\newtheorem{theorem}{Theorem}[section]
\newtheorem{lemma}[theorem]{Lemma}

\newtheorem{corollary}[theorem]{Corollary}
\newtheorem{proposition}[theorem]{Proposition}
\theoremstyle{definition}
\newtheorem{definition}[theorem]{Definition}

\theoremstyle{remark}
\usepackage{amssymb}
\newtheorem{remark}[theorem]{Remark}
\usepackage{enumerate}

\author{Rafael Bezerra dos Santos}
\address{Departamento de Matem\'{a}tica, Universidade Federal de Minas Gerais, UFMG, Belo Horizonte, MG, 30270-901, Brazil.}
\curraddr{}
\email{rafaelsantos23@mat.ufmg.br}
\thanks{The first author was partially supported by CNPq (Brazil), grant 312058/2025-0, and by FAPEMIG (Brazil), grant RED-00133-21.}
\thanks{}

\author{Lucas Reis}
\address{Departamento de Matem\'{a}tica, Universidade Federal de Minas Gerais, UFMG, Belo Horizonte, MG, 30270-901, Brazil.}
\curraddr{}
\email{lucasreismat@mat.ufmg.br}
\thanks{The second author was supported by CNPq (Brazil), grants 310583/2025-0 and 420721/2025-8, by FAPESP (Brazil), grant 25/21886-4 and by FAPEMIG (Brazil), grant APQ-02427-26. Corresponding author.}

\keywords{superinvolution, polynomial identity, central polynomial, cocharacter, growth}
\subjclass[2010]{16R50, 16W50, 16R10}
\begin{document}

\title[Identities, central polynomials and cocharacters of $(M_2(F), (G,*))$]{Polynomial identities, central polynomials and cocharacters of $M_2(F)$ with $G$-graded involution} 

\begin{abstract} 
Let $F$ be a field of characteristic zero, $G$ be a finite abelian group and $M_2(F)$ be the algebra of $2\times 2$ matrices over $F$. In this paper, we consider all $G$-graded involutions on $M_2(F)$ and determine the generators of the $T_{(G,*)}$-ideal of identities and of the $T_{(G,*)}$-space of central polynomials of $M_2(F)$. Moreover, we explicitly compute the $\langle n\rangle$-cocharacter and the $n$-th
\gi-codimensions for most of the gradings.  For the case of gradings by the Klein group, the  \gi-central codimensions and proper central \gi-codimensions are also explicitly computed.
\end{abstract}

\maketitle

\section{Introduction}
Polynomial identities provide a natural way to study associative algebras through the relations satisfied by all their elements. Let $F$ be a field of characteristic zero, let $A$ be an associative algebra over $F$ and let $F\langle X\rangle$ be the free associative algebra on a countable set $X={x_1,x_2,\ldots}$ of noncommuting variables. A polynomial $f(x_1,\ldots,x_n)\in F\langle X\rangle$ is a polynomial identity of $A$ if
$$
f(a_1,\ldots,a_n)=0
$$
for all $a_1,\ldots,a_n\in A$. The set $Id(A)$ of all polynomial identities of $A$ is a $T$-ideal of $F\langle X\rangle$, and an algebra satisfying a nontrivial polynomial identity is called a PI-algebra. Even for matrix algebras, the problem of describing this $T$-ideal is difficult: for $M_n(F)$, no set of generators for $Id(M_n(F))$ is known when $n\geq 3$.

Besides the identities themselves, one may study quantitative and representation theoretic invariants associated with them. If $P_n$ denotes the space of multilinear polynomials of degree $n$, Regev \cite{Regev} introduced the codimension sequence
$$
c_n(A)=\dim_F\frac{P_n}{P_n\cap Id(A)},\qquad n\geq 1,
$$
and proved that this sequence is exponentially bounded whenever $A$ is a PI-algebra. Since $S_n$ acts on $P_n$ by permuting the variables and $P_n\cap Id(A)$ is invariant under this action, the quotient $P_n/(P_n\cap Id(A))$ is an $S_n$-module. Its character, denoted by $\chi_n(A)$, is the $n$-th cocharacter of $A$.

A related problem concerns central polynomials. A polynomial $f\in F\langle X\rangle$ with no constant term is central for $A$ if all its evaluations on $A$ belong to the center of $A$. We denote by $Id^z(A)$ the set of central polynomials of $A$. Clearly,
$$
Id(A)\subseteq Id^z(A),
$$
and, if $f\in Id^z(A)$, then $[f,x]\in Id(A)$.

The algebra $M_2(F)$ is one of the few matrix algebras for which these objects are well understood in the ordinary setting. Drensky \cite{Drensky1} determined generators for $Id(M_2(F))$. Formanek et al. \cite{Formanek} obtained the decomposition of its $n$-th cocharacter, while Okhitin \cite{Okt} described a generating set for $Id^z(M_2(F))$.

These questions admit natural refinements when the algebra carries an additional structure. For a $G$-graded algebra
$$
A=\bigoplus_{g\in G}A^{(g)},
$$
one considers polynomial identities whose evaluations respect the homogeneous components. Bahturin and Drensky \cite{BD1} studied the $G$-gradings on $M_2(F)$ and determined generators for the corresponding $T_G$-ideals of graded identities. For $G=\mathbb{Z}_2$, Brandão and Koshlukov \cite{Plamen2} described the $T_2$-space of central polynomials of $M_2(F)$, while Di Vicenzo \cite{Onofrio} obtained an explicit decomposition of its $\mathbb{Z}_2$-cocharacter. More recently, in~\cite{Diogo} the authors explored the polynomial identities for $M_2(F)$ for all possible gradings in the case where $F$ is a finite field.

A parallel theory arises when the algebra is endowed with an involution. Recall that an involution $*$ is an antiautomorphism of order at most $2$. Polynomial identities, central polynomials, codimensions and cocharacters can then be defined by requiring evaluations to respect the symmetric and skew elements. For $M_2(F)$ endowed with the transpose or symplectic involution, Levchenko \cite{Lev} determined generators for the corresponding $T^*$-ideals of identities. Drensky and Giambruno \cite{Giam1} described the sequence of $*$-cocharacters, and Brandão and Koshlukov \cite{Plamen2} determined generators for the space of central $*$-polynomials. Analogous problems have been considered for $\mathbb{Z}_2$-graded algebras \cite{Plamen3} and, more recently, for $*$-superalgebras \cite{Ana}. In~\cite{BR26}, we studied $M_2(F)$ with its $\mathbb{Z}_2$-grading and transpose superinvolution, which is the unique admissible superinvolution in that setting.

The aim of the present paper is to treat simultaneously the grading and the involution. More precisely, we consider $M_2(F)$ as a \gi-algebra, that is, $M_2(F)$ is endowed with a $G$-grading and a graded involution. We determine generators for its $T_{(G,*)}$-ideal of identities and for its $T_{(G,*)}$-space of central polynomials. We also study the associated codimensions and cocharacters for all possible gradings. Moreover, for gradings by the Klein group, we explicitly obtain formulas for the central \gi-codimensions and proper central \gi-codimensions.

We conclude the introduction by describing the organization of the paper. In Section 2 we recall the basic definitions and results concerning $G$-graded algebras with involution. In Section 3 we introduce the free associative $(G,*)$-algebra, the $\langle n\rangle$-cocharacters and the auxiliary results used throughout the paper. Section 4 is devoted to elementary gradings of $M_2(F)$, while gradings by the Klein group are treated in Section 5.

\section{$G$-graded algebras with involution}

Let $F$ be a field of characteristic zero, $A$ be an associative algebra over $F$ and $G$ be a group. We say that $A$ is a $G$-graded algebra if $A$ can be written as a direct sum of vector spaces $A=\displaystyle \bigoplus_{g\in G}A^{(g)}$ such that $A^{(g)}A^{(h)}\subseteq A^{(gh)},$
 for all $g,h\in G$. The subspaces $A^{(g)}$ are called homogeneous components of degree $g$ of $A$ and an element $a\in A$ is homogeneous of degree $g$ if $a\in A^{(g)}$.
 
 Given a $G$-graded algebra $A$, the set $supp(G)=\{g\in G: A^{(g)}\neq \{0\}\}$ is called the support of the grading.

 An involution $*$ on an algebra $A$ is an antiautomorphism of order at most 2, i.e. a linear map $*:A\to A$ such that $(ab)^*=b^*a^*$ and $(a^*)^*=a,$ for all $a,b\in A.$ Notice that the identity map is an involution on an algebra $A$ if and only if $A$ is a commutative algebra. An algebra $A$ endowed with an involution $*$ is called a $*$-algebra.

 An involution $*$ on a $G$-graded algebra $A$ is a $G$-graded involution (or simply a graded involution) if $(A^{(g)})^*=A^{(g)}$, for all $g\in G$. In this case, we say that the $G$-graded algebra $A$ is a $(G,*)$-algebra. 

 If $A$ is a \gi-algebra, since char$(F)=0$, each homogeneous component $A^{(g)}$ can be written as $A^{(g)}=(A^{(g)})^+\oplus (A^{(g)})^-$, where $(A^{(g)})^+=\{a\in A^{(g)}:a^*=a\}$ denotes the set of symmetric elements of $A^{(g)}$ and $(A^{(g)})^-=\{a\in A^{(g)}:a^*=-a\}$ denotes the set of skew elements of $A^{(g)}$. As a consequence, a \gi-algebra can be written as $A=\displaystyle\bigoplus_{g\in G}((A^{(g)})^+\oplus(A^{(g)})^-).$

 Let $A$ and $B$ be $G$-graded algebras endowed with graded involutions $\star_1$ and $\star_2$, respectively. We say that $A$ and $B$ are isomorphic, as \gi-algebras, if there exists an isomorphism of algebras $\phi:A\to B$ such that $\phi(A^{(g)})=B^{(g)},$ for all $g\in G,$ and $\phi(a^{\star_1})=\phi(a)^{\star_2}$, for all $a\in A$.

 In this paper, we are interested in studying the graded involutions on the algebra $M_2(F)$, the algebra of $2\times 2$ matrices over $F$. Recall that $M_2(F)=span_F\{e_{11}, e_{22}, e_{12}, e_{21}\}$, where $e_{ij}$ denotes the usual unity matrices.

 Bahturin and Giambruno (\cite[Proposition 4.3]{BG1}) proved that if $M_2(F)$ is a $(G,*)$-algebra, then $supp(G)$ is a commutative subset of $G$. For this reason, from now on, we assume that $G$ is an abelian group.

 We first recall the gradings on $M_2(F)$. A $G$-grading on $M_2(F)$ is called elementary if there exists a pair $(g_1,g_2)\in G^2$ such that $e_{ij}\in M_2(F)^{(g_i^{-1}g_j)}.$

 We can give a structure of $K$-graded algebra on $M_2(F)$, where $K=\langle a,b:a^2=b^2=1, ab=ba \rangle=\{1,a,b,ab\}\cong \mathbb{Z}_2\times \mathbb{Z}_2$ is the Klein group as follows: $M_2(F)^{(1)}=span_F\{e_{11}+e_{22}\}, M_2(F)^{(a)}=span_F\{e_{12}+e_{21}\}, M_2(F)^{(b)}=span_F\{e_{11}-e_{22}\}$ and $M_2(F)^{(ab)}=span_F\{e_{12}-e_{21}\}.$ We call this grading the $(-1)$-grading. 
 
 Bahturin and Zaicev (\cite[Theorem 5.1]{BZ1}) proved that if $F$ is an algebraically closed field, then any $G$-grading on $M_2(F)$, up to isomorphism of $G$-graded algebras, is the elementary grading or the $(-1)$-grading. The algebra $M_2(F)$ admits the following four involutions: 

 \begin{itemize}
     \item[1)] $\begin{pmatrix} a& b\\ c&d\end{pmatrix}^{\gamma_1}=\begin{pmatrix} a& c\\ b&d\end{pmatrix}$, the transpose involution;
     \item[2)] $\begin{pmatrix} a& b\\ c&d\end{pmatrix}^{\gamma_2}=\begin{pmatrix} d& -b\\ -c&a\end{pmatrix}$, the symplectic involution;
     \item[3)] $\begin{pmatrix} a& b\\ c&d\end{pmatrix}^{\gamma_3}=\begin{pmatrix} d& b\\ c&a\end{pmatrix}$, the reflection involution;
     \item[4)] $\begin{pmatrix} a& b\\ c&d\end{pmatrix}^{\gamma_4}=\begin{pmatrix} a& -c\\ -b&d\end{pmatrix}$.
 \end{itemize}

In the next two theorems, we present the graded involutions on the algebra $M_2(F)$ (see \cite[Lemmas 1, 2 and 3]{BZ2}).

\begin{theorem}\label{thm:elementary}
    Let $G$ be an abelian group and $M_2(F)$ be equipped with a nontrivial elementary $G$-grading induced by the pair $(g_1,g_2)\in G^2$ and endowed with a graded involution $*$. If $g_1^2=g_2^2,$ then $*=\gamma_1, \gamma_2$ or $\gamma_3.$ Otherwise, $*=\gamma_2$ or $\gamma_3$.
\end{theorem}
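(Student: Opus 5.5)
The plan is to classify graded involutions directly from the action of $*$ on the matrix units. I read the statement up to isomorphism of $(G,*)$-algebras, with $F$ containing the square roots needed for a rescaling, for instance $F$ algebraically closed as in the Bahturin–Zaicev setting.

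\textbf{Setup.} Put $g=g_1^{-1}g_2\neq 1$. The homogeneous components of the elementary grading are:
\begin{itemize}
\item[(i)] $M_2(F)^{(1)}=D=span_F\{e_{11},e_{22}\}$;
\item[(ii)] $e_{12}\in M_2(F)^{(g)}$;
\item[(iii)] $e_{21}\in M_2(F)^{(g^{-1})}$.
\end{itemize}
Since $G$ is abelian, $g_1^2=g_2^2$ holds exactly when $g^2=1$, that is, when $g=g^{-1}$. So if $g^2\neq 1$ the spaces $Fe_{12}$ and $Fe_{21}$ are distinct components, and if $g^2=1$ they together form the component $span_F\{e_{12},e_{21}\}$.

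\textbf{Step 1: $*$ on the diagonal.} A graded involution preserves $D$. Since $D\cong F\times F$ is commutative, $*|_D$ is an automorphism of $F\times F$ of order at most $2$. Hence either $*$ fixes $e_{11}$ and $e_{22}$, or $*$ swaps them.

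\textbf{Step 2: $*$ on $e_{12}$.} Write $e_{12}=e_{11}e_{12}e_{22}$, so that $e_{12}^*=e_{22}^*\,e_{12}^*\,e_{11}^*$.
\begin{itemize}
\item If $*$ fixes $e_{11}$ and $e_{22}$, then $e_{12}^*\in e_{22}M_2(F)e_{11}=Fe_{21}$. When $g^2\neq 1$ this contradicts $(M_2(F)^{(g)})^*=M_2(F)^{(g)}$, so only the swap is possible in that case.
\item If $*$ swaps $e_{11}$ and $e_{22}$, then $e_{12}^*\in Fe_{12}$ and $e_{21}^*\in Fe_{21}$, whatever $g$ is.
\end{itemize}

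\textbf{Step 3: the swap case.} Write $e_{12}^*=\lambda e_{12}$ and $e_{21}^*=\mu e_{21}$. Applying $*$ to $e_{11}=e_{12}e_{21}$ gives
$$e_{22}=e_{21}^*e_{12}^*=\lambda\mu\, e_{22},$$
so $\lambda\mu=1$. Since $*$ has order $2$, $\lambda^2=1$. Thus $\lambda=1$ gives $\gamma_3$ and $\lambda=-1$ gives $\gamma_2$, by linearity.

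\textbf{Step 4: the fixing case.} This case occurs only when $g^2=1$. Write $e_{12}^*=\lambda e_{21}$ with $\lambda\neq 0$. Involutivity forces $e_{21}^*=\lambda^{-1}e_{12}$, and the relations $e_{12}e_{21}=e_{11}$ and $e_{21}e_{12}=e_{22}$ are then automatically compatible. Conjugation by $d=\mathrm{diag}(1,c)$ preserves every homogeneous component, so it is a graded automorphism. It rescales $e_{12}\mapsto c^{-1}e_{12}$ and $e_{21}\mapsto c\,e_{21}$, and therefore transports $\lambda$ to $c^{\pm 2}\lambda$. Choosing $c$ with $c^2=\lambda^{\mp1}$ makes $\lambda=1$, which is exactly $\gamma_1$. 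In particular $\gamma_4$, which is the case $\lambda=-1$, is isomorphic to $\gamma_1$ as a $(G,*)$-algebra.

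\textbf{Step 5: conclusion.} Steps 1–4 give both halves of the statement: $\gamma_1,\gamma_2,\gamma_3$ when $g_1^2=g_2^2$, and only $\gamma_2,\gamma_3$ otherwise. One should also check the converse, that $\gamma_2$ and $\gamma_3$ preserve every component (so they are graded for every $g$) and that $\gamma_1$ is graded exactly when $g=g^{-1}$. This is immediate from the formulas.

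\textbf{Main obstacle.} The only delicate point is Step 4: the conclusion $*=\gamma_1$ holds only up to graded isomorphism, and it requires the square root $\sqrt{\lambda}$ to exist in $F$. Over a non-closed field I would expect to have to keep $\gamma_4$, and more generally the forms with other $\lambda$, as separate cases.
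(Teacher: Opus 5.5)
Your argument is correct. The paper does not prove this statement; it quotes it from \cite[Lemmas 1, 2 and 3]{BZ2}, so there is no internal proof to compare against.

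Your idempotent argument is a complete, elementary replacement for that citation. The involution restricts to an automorphism of $Fe_{11}+Fe_{22}$, so it fixes or swaps $e_{11},e_{22}$. Then $e_{12}^*=e_{22}^*e_{12}^*e_{11}^*$ lands in $Fe_{21}$ or $Fe_{12}$. What this route buys is precision about hypotheses that the printed statement lacks:
\begin{itemize}
\item In the swap case you get the literal equality $*=\gamma_2$ or $*=\gamma_3$, with no assumption on $F$.
\item In the fixing case, which you correctly show occurs only when $g^2=1$, you get the family $e_{12}^*=\lambda e_{21}$, $e_{21}^*=\lambda^{-1}e_{12}$.
\end{itemize}
This family is isomorphic to $\gamma_1$ \emph{exactly} when $\lambda$ is a square. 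The nonzero symmetric elements of degree $g$ are $\alpha(e_{12}+\lambda e_{21})$, whose square is $\alpha^2\lambda I$. A graded $*$-isomorphism preserves symmetric elements of degree $g$ and fixes $I$, so the square class $\lambda(F^\times)^2$ is an invariant.

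Your caveat about non-closed fields is therefore justified. Over $\mathbb{R}$, for example, $\gamma_4$ (the case $\lambda=-1$) is a graded involution of the elementary grading with $g^2=1$ that is isomorphic to none of $\gamma_1,\gamma_2,\gamma_3$. It differs from $\gamma_1$ by the invariant above, and from $\gamma_2,\gamma_3$ because its degree-$1$ component has no nonzero skew elements.

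The paper states the theorem for an arbitrary field of characteristic zero and with ``$*=$''. It must be read as you read it: up to isomorphism of $(G,*)$-algebras, over a field in which every nonzero element is a square (e.g.\ an algebraically closed one).
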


\begin{theorem}\label{-1}
    Let $M_2(F)$ be equipped with the $(-1)$-grading and endowed with a graded involution $*$. Then $*=\gamma_1, \gamma_2, \gamma_3$ or $\gamma_4.$
\end{theorem}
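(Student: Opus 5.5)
The plan is to classify all involutions on $M_2(F)$ that preserve the four one-dimensional homogeneous components of the $(-1)$-grading, and then identify them, up to the appropriate notion of isomorphism, with $\gamma_1,\dots,\gamma_4$.

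\textbf{Step 1: every graded involution acts diagonally on the basis.} Put
\[
u_1=e_{11}+e_{22},\quad u_a=e_{12}+e_{21},\quad u_b=e_{11}-e_{22},\quad u_{ab}=e_{12}-e_{21}.
\]
Each component $M_2(F)^{(g)}$ is spanned by the single element $u_g$. A graded involution satisfies $(M_2(F)^{(g)})^*=M_2(F)^{(g)}$ and $*^2=\mathrm{id}$, so $u_g^*=\epsilon_g u_g$ with $\epsilon_g\in\{\pm1\}$. Since $*$ is an antiautomorphism fixing the identity, $\epsilon_1=1$.

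\textbf{Step 2: reduce to a sign condition.} The products satisfy $u_au_b=-u_bu_a$, and $u_au_b=-u_{ab}$ (up to the sign convention chosen; each $u_gu_h$ is $\pm u_{gh}$). Applying $*$ to $u_au_b=-u_{ab}$ gives
\[
u_b^*u_a^*=-u_{ab}^*, \quad\text{that is,}\quad \epsilon_a\epsilon_b\,u_bu_a=-\epsilon_{ab}u_{ab}.
\]
Since $u_bu_a=-u_au_b=u_{ab}$, this becomes $\epsilon_a\epsilon_b=-\epsilon_{ab}$. Conversely, any sign choice $(\epsilon_a,\epsilon_b,\epsilon_{ab})$ satisfying this relation defines a linear map that reverses products on the basis $\{u_g\}$. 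Because $u_g^2\in F\cdot u_1$, this is enough to verify that the map is an involution. This leaves exactly four sign patterns, obtained by choosing $\epsilon_a,\epsilon_b$ freely.

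\textbf{Step 3: match the four patterns with $\gamma_1,\dots,\gamma_4$.} Each $\gamma_i$ sends every $u_g$ to $\pm u_g$, so each $\gamma_i$ is graded. I will compute the signs:
\begin{itemize}
\item $\gamma_1$ (transpose) fixes $u_a$ and $u_b$ and negates $u_{ab}$;
\item $\gamma_2$ (symplectic) negates $u_a$, $u_b$ and $u_{ab}$;
\item $\gamma_3$ (reflection) fixes $u_a$, negates $u_b$ and fixes $u_{ab}$;
\item $\gamma_4$ negates $u_a$, fixes $u_b$ and fixes $u_{ab}$.
\end{itemize}
These are four distinct sign patterns, so every graded involution equals one of $\gamma_1,\dots,\gamma_4$, which proves the statement.

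The main obstacle is the sign bookkeeping in the relation $u_au_b=\pm u_{ab}$ and in the check that $\epsilon_a\epsilon_b=-\epsilon_{ab}$ holds for each $\gamma_i$. The clean equality (rather than mere isomorphism) comes from the fact that the grading fixes the basis up to scalars, so I expect no change of basis to be needed.
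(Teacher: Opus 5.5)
Your proof is correct, but it takes a different route from the paper. The paper gives no argument of its own for this statement; it quotes it, together with Theorem~\ref{thm:elementary}, from \cite[Lemmas 1, 2 and 3]{BZ2}.

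The usual route in that literature writes an arbitrary involution of $M_2(F)$ as $X\mapsto\Phi^{-1}X^t\Phi$, with $\Phi$ symmetric or skew-symmetric, and then determines which $\Phi$ are compatible with the grading. For the $(-1)$-grading the transpose is itself graded, so this forces $\Phi$ to be homogeneous, and the choices $\Phi=u_1,u_{ab},u_a,u_b$ give exactly $\gamma_1,\gamma_2,\gamma_3,\gamma_4$. That approach handles elementary and non-elementary gradings in one framework, but it relies on the Skolem--Noether description of involutions.

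Your argument instead uses the special feature that every component of the $(-1)$-grading is one-dimensional, so a graded involution must act by signs on $u_1,u_a,u_b,u_{ab}$. The relations $u_au_b=-u_{ab}$ and $u_bu_a=u_{ab}$ (both correct) cut the eight sign patterns down to the four with $\epsilon_{ab}=-\epsilon_a\epsilon_b$. Your sign table for the $\gamma_i$ is right and agrees with the paper's table of symmetric and skew components. Since the four admissible patterns are realized by the four $\gamma_i$, linearity gives literal equality rather than equality up to isomorphism. This is entirely elementary and self-contained.

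One small point: the ``converse'' in Step 2 is not justified as written. Reversing the product of $u_a$ and $u_b$ does not by itself give reversal for the pairs $(u_a,u_{ab})$ and $(u_b,u_{ab})$. It does hold, since $u_au_{ab}=-u_b=-u_{ab}u_a$ and $u_bu_{ab}=u_a=-u_{ab}u_b$. However, you never need this converse, because Step 3 already exhibits each admissible pattern as one of the known involutions. The hedge ``up to the sign convention chosen'' can also simply be dropped.
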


\section{The free associative \gi-algebra and the $\langle n\rangle$-cocharacter}

Let $G$ be a finite abelian group. For all $g \in G,$ consider $(X^{(g)})^* = \{x_{i,g}, x^*_{i,g} ~| ~ i \ge 1\}$ a countable set of variables and define $X =\bigcup\limits_{g\in G}(X^{(g)})^*.$
Let $F \langle X| G, * \rangle$ be the free associative \gi-algebra generated by $X$ over $F,$ whose elements are called $(G,*)$-polynomials. Consider $Y=\underset{g\in G}{\bigcup }Y^{(g) }$ and $Z=\underset{g\in G}{\bigcup }Z^{(g) }$, where $Y^{(g)} = \{y_{i,g}=x_{i,g}+x^*_{i,g}: i \ge 1\}$ is the set of
homogeneous symmetric variables of degree $g$ and $Z^{(g)} = \{z_{i,g}=x_{i,g}-x^*_{i,g} : i \ge 1\}$ is the set of
homogeneous skew variables of degree $g.$ 
Then, $\mathcal{F} := F\langle X ~|~ G, *\rangle = F\langle Y \cup Z\rangle.$ 
For any $g\in G$, define
$$\mathcal{F}_g = \mbox{span}_F \{w_{i_1,g_{j_1}}
\cdots w_{i_t,g_{j_t}}
: g_{j_1}\cdots g_{j_t} = g,~ w_i \in \{y_i, z_i\}\}$$
the space of elements that have homogeneous
degree $g$ and observe that $\mathcal{F} =\bigoplus\limits_{g\in G}\mathcal{F}^{(g)}$ is a $(G, *)$-algebra.

\begin{definition} A $(G,*)$-polynomial $$f=f(y_{1,1},\ldots,y_{i_1,1}, z_{1,1},\ldots,z_{j_1,1}, \ldots, y_{1,g_t},\ldots,y_{i_t,g_t}, z_{1,g_t},\ldots,z_{j_t,g_t}) \in \mathcal{F}$$ is a $(G,*)$-identity of a $(G,*)$-algebra $A,$ and we write $f \equiv 0$ on $A$, if $$f(a^+_{1,1},\ldots,a^+_{i_1,1}, a^-_{1,1},\ldots,a^-_{j_1,1}, \ldots, a^+_{1,g_t},\ldots,a^+_{i_t,g_t}, a^-_{1,g_t},\ldots,a^-_{j_t,g_t})=0$$ for all admissible evaluations, i.e., $a_{r, g}^+\in (A^{(g)})^+$ and $a_{r, g}^-\in (A^{(g)})^-$.
\end{definition}

Let $Id^{(G,*)}(A) \subseteq \mathcal{F}$   be the set of all $(G,*)$-identities of $A.$
Notice that $Id^{(G,*)}(A)$ is an ideal invariant
under all endomorphisms of $\mathcal{F}$ that preserve the grading and commute with the involution. This is the $T_{(G,*)}$-ideal of $A.$ 


If $f_1,\ldots, f_n\in \mathcal{F}$, we denote by $\langle f_1,\ldots, f_n \rangle_{T_{(G,*)}}$ the $T_{(G,*)}$-ideal generated by the polynomials $f_1,\ldots, f_n$.

Since $F$ is a field of characteristic zero, $Id^{(G,*)}(A)$ is determined by 
multilinear $(G,*)$-polynomials. Thus, we consider
$$P_n^{(G,*)}=\mbox{span}_F\{w_{\sigma(1)}w_{\sigma(2)}\cdots w_{\sigma(n)}: \sigma \in S_n,~ w_i \in \{y_{i,g}, z_{i,g}\}, ~1 \le i \le n, ~g \in G\}$$ 
the space of multilinear $(G,*)$-polynomials of degree $n.$ 

\begin{definition}
    For $n\ge 1,$ the $n$-th $(G,*)$-codimension of a $(G,*)$-algebra $A$ is defined as $$c_n^{(G,*)}(A):=\dim_F\frac{P_n^{(G,*)}}{P_n^{(G,*)}\cap Id^{(G,*)}(A)}. $$ 
\end{definition}

Observe that $c_n(A)\leq c_n^{(G,*)}(A)\leq 2^n|G|^nc_n(A).$ Thus, by \cite{Regev}, if $A$ is a $(G,\ast)$-algebra satisfying a nontrivial ordinary polynomial identity, then its sequence of $(G, *)$-codimensions is exponentially bounded. 
 For readers interested in studying the asymptotic behavior of such a sequence, we recommend the references \cite{Mara, OSV, Lorena}. 

 Next, we will deal with central polynomials. Given a \gi-algebra $A$, a \gi-polynomial $f\in \mathcal{F}$ is a central \gi-polynomial for $A$ if it has no constant term and any admissible evaluation on $f$ at elements of $A$ belongs to the center of $A$. It is clear that a \gi-polynomial identity of $A$ is a central \gi-polynomial of $A$. The set $$Id^{(G,*),z}(A)=\{f\in \mathcal{F}:\mbox{ $f$ is a central $(G,*)$-polynomial for $A$}\}$$ is not, in general, a $T_{(G,*)}$-ideal of $\mathcal{F}$. However, it is a $T_{(G,*)}$-subspace of $\mathcal{F}$, i.e. a subspace invariant under all graded endomorphisms of $\mathcal{F}$ commuting with the involution $*$. 

We may also define two other sequences attached to a \gi-algebra.

\begin{definition}\label{codimcentral}
    For $n\ge 1,$ the $n$-th central $(G,*)$-codimension of a $(G,*)$-algebra $A$ is defined as $$c_n^{(G,*),z}(A):=\dim_F\frac{P_n^{(G,*)}}{P_n^{(G,*)}\cap Id^{(G,*),z}(A)}. $$ 
\end{definition}

\begin{definition}\label{codimproper}
    For $n\ge 1,$ the $n$-th proper central $(G,*)$-codimension of a $(G,*)$-algebra $A$ is defined as $$c_n^{(G,*),\delta}(A):=\dim_F\frac{P_n^{(G,*)}\cap Id^{(G,*),z}(A)}{P_n^{(G,*)}\cap Id^{(G,*)}(A)}. $$ 
\end{definition}

It is clear that, for all $n\geq 1, c_n^{(G,*)}(A)=c_n^{(G,*),z}(A)+c_n^{(G,*),\delta}(A).$ As a consequence, if a \gi-algebra $A$ satisfies a nontrivial ordinary identity, then the sequences $\{c_n^{(G,*),z}(A)\}_{n\geq 1}$ and $\{c_n^{(G,*),\delta}(A)\}_{n\geq 1}$ are exponentially bounded.

 If $f_1,\ldots, f_n\in \mathcal{F}$, we denote by $\langle f_1,\ldots, f_n \rangle^{T_{(G,*)}}$ the $T_{(G,*)}$-space generated by the polynomials $f_1,\ldots, f_n$.


     \begin{remark}\label{rem:center}
         For $n\geq 1$, the center of $M_n(F)$ is isomorphic to $F$. More precisely, $Z(M_n(F))=\{\alpha I_n:\alpha \in F\},$ where $I_n$ denotes the $n\times n$ identity matrix.
     \end{remark}

From now on, we assume that $G=\{g_1=1,g_2,\ldots,g_k\}$ is a finite abelian group of order $k$ and $A$ is a $(G, *)$-algebra. For an integer $n \in \mathbb{N},$ we write $n=n_1+n_2+\cdots+n_{2k},$ where each $n_i$ is a non-negative integer, for $1\le i \le 2k$ and denote by $\langle n \rangle =(n_1, n_2, \ldots, n_{2k})$ a composition of $n$ into $2k$ parts. A multipartition $\langle \lambda \rangle=(\lambda_1, \lambda_2, \ldots, \lambda_{2k})\vdash \langle n \rangle$ means that $\lambda_i\vdash n_i$ for $1 \le i\le 2k$. We write $\langle \lambda \rangle \vdash n$ if this holds for some composition $\langle n \rangle$ of $n. $ 

Let $P_{\langle n \rangle}$ be the space of multilinear $(G,*)$-polynomials  where the first $n_1$ variables are symmetric in homogeneous degree $1,$ the next $n_2$ variables are skew of homogeneous degree $1,$ and so on so that the penultimate $n_{2k-1}$ variables are symmetric of homogeneous degree $g_k$ and the last $n_{2k}$ variables are skew of homogeneous degree $g_k.$ 

Note that there are
$\displaystyle\binom{n}{\langle n \rangle}:=\displaystyle\binom{n}{ n_1, \ldots, n_{2k} }$ subspaces isomorphic to $P_{\langle n \rangle}$ in $P_n^{(G,*)}$. In fact, we have
\begin{equation} \label{pn-}
P_n^{(G,*)} \cong \displaystyle \bigoplus_{\langle n \rangle } \displaystyle\binom{n}{\langle n \rangle} P_{\langle n \rangle} .
\end{equation}

We consider the quotient spaces $P_{\langle n \rangle}(A) = \dfrac{P_{\langle n \rangle}}{P_{\langle n \rangle}\cap Id^{(G,*)}(A)}, P_{\langle n \rangle}^z(A) = \dfrac{P_{\langle n \rangle}}{P_{\langle n \rangle}\cap Id^{(G,*),z}(A)} \mbox{ and } P_{\langle n \rangle}^{\delta}(A) = \dfrac{P_{\langle n \rangle}\cap Id^{(G,*),z}(A)}{P_{\langle n \rangle}\cap Id^{(G,*)}(A)}$ and define $c_{\langle n \rangle}(A)=\dim_F( P_{\langle n \rangle}(A))$ as the $\langle n \rangle $-codimension of $A$, $c_{\langle n \rangle}^{z}(A)=\dim_F(P_{\langle n \rangle}^z(A))$ as the central $\langle n\rangle$-codimension of $A$ and $c_{\langle n \rangle}^{\delta}(A)=\dim_F(P_{\langle n \rangle}^{\delta}(A))$ as the proper central $\langle n\rangle$-codimension of $A$.

By (\ref{pn-}), the relation between the $n$-th $(G,*)$-codimension of $A$ and its $\langle n \rangle$-codimension is given by
\begin{equation} \label{293-}
		c_n^{(G,*)}(A)= \underset{\langle n \rangle }{\sum} \displaystyle\binom{n}{\langle n \rangle } c_{\langle n \rangle}(A). 
	\end{equation}

\begin{remark}\label{rmk:codimcentral}
    In case of central and proper central $\langle n\rangle$-codimensions, we have that $c_n^{(G,*),z}(A)= \displaystyle\underset{\langle n \rangle }{\sum} \displaystyle\binom{n}{\langle n \rangle } c_{\langle n \rangle}^z(A)$ and $c_n^{(G,*),\delta}(A)= \displaystyle\underset{\langle n \rangle }{\sum} \displaystyle\binom{n}{\langle n \rangle } c_{\langle n \rangle}^{\delta}(A)$.
\end{remark}

There is a natural left action of the group $S_{\langle n \rangle}:= S_{n_1}\times\cdots \times S_{n_{2k}}$ on $P_{\langle n \rangle},$ where each $S_{n_i}$ permutes
the corresponding variables associated with $n_i,$ $1\le i \le 2k.$ Since $P_{\langle n \rangle} \cap Id^{(G,*)}(A)$ is invariant under this
action, $P_{\langle n \rangle}(A)$ inherits a structure of $S_{\langle n \rangle}$-module. It is known that the irreducible $S_{\langle n \rangle}$-characters are
outer tensor products of irreducible $S_{n_i}$-characters which are in one-to-one correspondence with partitions
$\lambda_i \vdash n_i.$ Hence, we consider $\chi_{\langle \lambda \rangle}=\chi_{\lambda_1} \otimes\cdots \otimes \chi_{\lambda_{2k}}$ the irreducible $S_{\langle n \rangle}$-character associated to a multipartition
$\langle \lambda \rangle := (\lambda_1, \ldots, \lambda_{2k}) \vdash \langle n \rangle,$ where $\chi_{\lambda_i}$ is the irreducible $S_{n_i}$-character associated to $\lambda_i.$ Moreover, its degree
is given by $d_{\langle \lambda \rangle}=d_{\lambda_1} \cdots d_{\lambda_{2k}},$ where $d_{\lambda_i}$ is the degree of $\chi_{\lambda_i}$. By complete reducibility we may consider

\begin{equation}\label{cocharacter}\chi_{\langle n \rangle}(A)=\sum\limits_{\langle \lambda \rangle \vdash \langle n \rangle} m_{\langle \lambda \rangle} \chi_{\langle \lambda \rangle},\end{equation}
the decomposition of the $\langle n \rangle$-character of the space $P_{\langle n \rangle}(A)$ into irreducible characters, called $\langle n \rangle$-cocharacter of $A,$
where $m_{\langle \lambda \rangle}$ is the multiplicity of $\chi_{\langle \lambda \rangle}$. 


By (\ref{cocharacter}) we notice that
 \begin{equation} \label{cmultin-}
     c_{\langle n \rangle }(A)= \chi_{\langle n \rangle}(A)(1)=\sum\limits_{\langle \lambda \rangle \vdash \langle n \rangle} m_{\langle \lambda \rangle} d_{\langle \lambda \rangle}.
 \end{equation}

 To obtain more precise information about the multiplicities $m_{\langle \lambda \rangle}$ that appear in the decomposition of the $\langle n \rangle$-cocharacter $\chi_{\langle n \rangle}(A)$ described in (\ref{cocharacter}), we use the representation theory of the general linear group $GL_m$ in terms of $(G, *)$-algebras. The
details can be found in [\cite[Section 12.4]{livro} and in \cite{OSV}.

For $m \ge 1,$ define $X^m=
\bigcup\limits_{g\in G}(Y^{(g)})^m\cup (Z^{(g)})^m$, where for $g\in G$, we consider
$$(Y^{(g)})^m=\{y_{1,g}, \ldots, y_{m,g}\} \;\;\mbox{and}\;\;(Z^{(g)})^m=\{z_{1,g}, \ldots, z_{m,g}\}.$$ 

Denote by $F_m:=F\langle X^m|G,*\rangle$ the free associative $(G,*)$-algebra generated by $X^m$ over $F.$ Define $F_m^n$ the subspace of homogeneous polynomials in $F_m$ with degree $n \ge m$ and notice that the group $GL_m^{2k}:= GL_m\times \cdots \times GL_m,$ the direct product of $2k$-copies of $GL_m,$ acts diagonally on $F_m^n.$ Since $F_m^n\cap Id^{(G,*)}(A)$ is invariant under this action, we have that the space

 $$F_m^n(A)=\frac{F_m^n}{F_m^n\cap Id^{(G,*)}(A)}$$ has a structure of $GL_m^{2k}$-module. Hence, we can consider $\psi_n^{(G,*)}(A)$ its $GL_m^{2k}$-character, called $n$-th $GL_m^{2k}$-cocharacter of $A.$ It is known (see \cite{livro})  that there exists a one-to-one correspondence between irreducible $GL_m^{2k}$-modules and multipartitions $\lambda=(\lambda_1, \ldots, \lambda_{2k})\vdash \langle n \rangle,$ where $\lambda_i$ is a  partition of $n_i$ with at most $m$ parts, for $1 \le i \le 2k.$ Since char$(F)=0,$ by complete reducibility, we may write 
 
 \begin{equation} \label{eq2coca-}
\psi_n^{(G,*)}(A)=  \displaystyle \sum_{\langle n\rangle  } \underset{\scriptsize{\begin{array}{c}
{\langle \lambda\rangle}\vdash {\langle n\rangle} \\
			h( \lambda )\leq m
	\end{array}}}{\sum} \widetilde{m}_{{\lambda}}\psi_{{\lambda}},
\end{equation}  
where $\psi_{\langle \lambda \rangle}$ is the irreducible $GL_m^{2k}$-character associated to
the multipartition $\langle \lambda \rangle$ and $h(\langle \lambda \rangle)$ is the maximum value of the heights $h(\lambda_i),$ $1 \le i \le 2k,$ of the Young diagrams corresponding to the partitions $\lambda_i\vdash n_i.$ 

\begin{theorem}\label{multip-}
If $\chi_{\langle n \rangle}(A)$ and $\psi_n^{(G, *)}(A)$ are the $\langle n \rangle$-cocharacter and the $GL_m^{2k}$-cocharacter of $A$ as given in (\ref{cocharacter}) and (\ref{eq2coca-}), respectively, then $m_{\langle \lambda \rangle}=\tilde{m}_{\langle \lambda \rangle}$ for all multipartitions $\langle \lambda\rangle\vdash \langle n \rangle$ such that $h(\langle \lambda \rangle)\le m.$
	
\end{theorem}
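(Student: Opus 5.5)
The plan is to reduce the statement to the classical correspondence between $S_n$-modules and $GL_m$-modules, applied separately to each of the $2k$ blocks of variables, as in \cite[Section 12.4]{livro} and \cite{OSV}. Fix a composition $\langle n\rangle=(n_1,\dots,n_{2k})$ with $n_i\le m$ where needed. First, $F_m^n$ decomposes as a direct sum, over all compositions $\langle n\rangle$, of the subspaces $F_m^{\langle n\rangle}$. Here $F_m^{\langle n\rangle}$ consists of polynomials of total degree $n_1$ in the symmetric variables of degree $1$, $n_2$ in the skew variables of degree $1$, and so on. Each $F_m^{\langle n\rangle}$ is $GL_m^{2k}$-invariant, because the $i$-th copy of $GL_m$ acts linearly only on the $m$ variables of the $i$-th type.

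Since $Id^{(G,*)}(A)$ is a $T_{(G,*)}$-ideal and char$F=0$, it is generated by multihomogeneous polynomials. Hence $F_m^n\cap Id^{(G,*)}(A)$ splits along this decomposition, so $F_m^n(A)=\bigoplus_{\langle n\rangle}F_m^{\langle n\rangle}(A)$, which matches the outer sum in (\ref{eq2coca-}). It therefore suffices to compare $F_m^{\langle n\rangle}(A)$ with $P_{\langle n\rangle}(A)$.

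For this comparison I would use the standard highest weight vector argument. An irreducible $GL_m^{2k}$-submodule of type $\langle\lambda\rangle$ is generated by a highest weight vector $f_{\langle\lambda\rangle}$. Up to scalars, this vector is obtained from a product of standard polynomials, one for each column of each Young diagram $\lambda_i$, acted on by the group algebra of $S_{\langle n\rangle}$. Concretely, the highest weight vectors of type $\langle\lambda\rangle$ in $F_m^{\langle n\rangle}$ correspond bijectively to elements $\sum_{\langle T\rangle}\alpha_{\langle T\rangle}\, e_{T_{\lambda_1}}\cdots e_{T_{\lambda_{2k}}}$ acting on a multilinear polynomial. This is done via multilinearization and the identification of variables along rows, using the $2k$ independent copies of the classical correspondence.

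Multilinearization and identification of variables are inverse to each other up to nonzero scalars in characteristic zero, and both preserve $T_{(G,*)}$-ideal membership. From this I would deduce that the space of highest weight vectors of weight $\langle\lambda\rangle$ in $F_m^{\langle n\rangle}(A)$ is isomorphic to $e_{T_{\lambda_1}}\cdots e_{T_{\lambda_{2k}}}P_{\langle n\rangle}(A)$ for a fixed multitableau. Its dimension is $m_{\langle\lambda\rangle}$ by the theory of $S_{n_1}\times\cdots\times S_{n_{2k}}$-modules: the irreducible characters are outer tensor products, and the idempotents $e_{T_{\lambda_1}}\cdots e_{T_{\lambda_{2k}}}$ are products of commuting idempotents. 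On the other hand, its dimension is $\tilde m_{\langle\lambda\rangle}$, since each irreducible $GL_m^{2k}$-module has a one-dimensional space of highest weight vectors. The condition $h(\langle\lambda\rangle)\le m$ is exactly what is needed for the Young symmetrizer to produce a nonzero vector in only $m$ variables per block.

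I expect the main obstacle to be justifying the multi-block version of the correspondence, i.e. that $GL_m^{2k}$-irreducibles are outer tensor products and that the highest weight vectors factor block by block. I would handle this by observing that $F_m^{\langle n\rangle}$ is, as a $GL_m^{2k}$-module, a quotient-compatible tensor-type construction to which \cite[Section 12.4]{livro} applies verbatim, and I would essentially cite \cite{OSV}.
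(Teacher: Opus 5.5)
Your proposal is correct and takes essentially the same route as the paper. The paper gives no proof of its own and simply defers to \cite[Section 12.4]{livro} and \cite{OSV}, where the standard argument matches the multiplicities essentially as you sketch: $e_{T_{\lambda_1}}\cdots e_{T_{\lambda_{2k}}}P_{\langle n\rangle}(A)$ is identified with the space of highest weight vectors of weight $\langle\lambda\rangle$ in $F_m^{\langle n\rangle}(A)$, via identification of row variables and multilinearization, one block at a time.

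Two small points should be tightened. First, the splitting of $F_m^n\cap Id^{(G,*)}(A)$ along compositions follows from its invariance under the scalar matrices in each copy of $GL_m$ (equivalently, from the closure of $T_{(G,*)}$-ideals under multihomogeneous components over an infinite field). It does not follow merely from the ideal being generated by multihomogeneous polynomials. Second, identification followed by multilinearization returns a nonzero multiple of a multilinear polynomial only when that polynomial is symmetric in the variables of each row. This is why the row symmetrizer must be the outer factor of each $e_{T_{\lambda_i}}$.
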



By [\cite{livro}, Theorem 12.4.12], each irreducible $GL^{2k}_m$-module is generated by a non-zero polynomial $f_{\langle \lambda \rangle}$  called the
highest weight vector associated to the multipartition $\langle \lambda \rangle$ and it is given by $$f_{{\langle \lambda \rangle}} =\prod\limits_{j=1}^{(\lambda_1)_1}St_{{h_j}(\lambda_1)}(y_{1,1}, \ldots, y_{{h_j}(\lambda_1),1})  \cdots \prod\limits_{j=1}^{(\lambda_{2k})_1}St_{{h_j}(\lambda_{2k})}(z_{1,g_k}, \ldots, z_{{h_j}(\lambda_{2k}),g_k})$$

	

\noindent where $St_r(x_1, \ldots , x_r) = \sum\limits_{\sigma \in S_r}\mbox{sgn}(\sigma)x_{\sigma(1)}\cdots x_{\sigma(r)}$ is the standard polynomial of degree $r$ and $h_j(\lambda_{i})$  represents the height of the $j$-th column of the Young diagram $T_{\lambda_{i}}$ associated to the partition $\lambda_i\vdash n_i.$ It is known that
every polynomial $f_{\langle \lambda \rangle}$ is linearly generated by the polynomials $f_{T_{\langle \lambda \rangle}}$ as we will see below.

For a multipartition $\langle \lambda \rangle =
(\lambda_1, \ldots, \lambda_{2k}) \vdash \langle n \rangle$ we consider the multitableau $T_{\langle \lambda \rangle} = (T_{\lambda_1} , ~\ldots ,~ T_{\lambda_{2k}} )$ formed by $2k$ Young tableaux, which
is filled by placing the numbers from $1$ to $n$ in ascending order from top to bottom. We define the standard
multitableau to be the one such that the integers $1, ~\ldots ,~ n$ in this order, fill in from top to bottom, column by
column, the tableau $T_{\lambda_{1}}$ to the tableau $T_{\lambda_{2k}} .$

Consider $\sigma \in S_n$ the only permutation that changes the standard
multitableau to the multitableau $T_{\langle \lambda \rangle}.$ The highest weight vector $f_{T_{\langle \lambda \rangle}}$ corresponding to the multitableau
$T_{\langle \lambda \rangle}$ is defined as
$f_{T_{\langle \lambda \rangle}}:= f_{{\langle \lambda \rangle}}\sigma^{-1},$  
where the right action of $S_n$ on $F^n_m(A)$ is defined by exchanging the places of the variables in each monomial.   

The next result relates the highest weight vectors  to the multiplicities in Theorem \ref{multip-}. 

\begin{theorem}\label{multiplicity-}
	The multiplicity $\tilde{m}_{\langle \lambda \rangle}$ in (\ref{eq2coca-}) is non-zero if, and only if, there exists a multitableau $T_{\langle \lambda \rangle},$ such that $f_{T_{\langle \lambda \rangle}} \notin Id^{(G,*)}(A).$ 
	Moreover, $\tilde{m}_{\langle \lambda \rangle}$ is equal to the maximum number of highest weight vectors  associated to the multitableaux of type $\langle \lambda \rangle$ that are linearly independent in $F_{m}^n(A).$ 
	
\end{theorem}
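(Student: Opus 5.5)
The plan is to derive the statement from the classical theory of highest weight vectors for $GL_m$, applied factor by factor to $GL_m^{2k}$, as in \cite[Section 12.4]{livro}.

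\textbf{Step 1: multiplicity as a dimension.} Since char$(F)=0$, every polynomial $GL_m^{2k}$-module $M$ is completely reducible. Each irreducible summand with character $\psi_{\langle\lambda\rangle}$ is the outer tensor product $W_{\lambda_1}\otimes\cdots\otimes W_{\lambda_{2k}}$ of irreducible $GL_m$-modules. Let $U$ be the product of the groups of upper unitriangular matrices in the $2k$ factors. A vector of $M$ is a highest weight vector of weight $\langle\lambda\rangle$ if it is fixed by $U$ and the diagonal torus acts on it by the character $\langle\lambda\rangle$. Each $W_{\lambda_i}$ has a one-dimensional space of highest weight vectors, so the same holds for the outer tensor product. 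Hence
$$\tilde m_{\langle\lambda\rangle}=\dim_F\{v\in M : v \mbox{ is a highest weight vector of weight } \langle\lambda\rangle\}.$$
I would apply this with $M=F_m^n(A)$.

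\textbf{Step 2: highest weight vectors of the quotient come from the free algebra.} The projection $\pi:F_m^n\to F_m^n(A)$ is a surjective $GL_m^{2k}$-homomorphism. By complete reducibility, $F_m^n$ splits as $(F_m^n\cap Id^{(G,*)}(A))\oplus C$ for some submodule $C$, and $\pi$ restricts to an isomorphism $C\cong F_m^n(A)$. Therefore every highest weight vector of weight $\langle\lambda\rangle$ in $F_m^n(A)$ is the image under $\pi$ of a highest weight vector of the same weight in $F_m^n$. Conversely, $\pi$ sends such vectors of $F_m^n$ to highest weight vectors or to $0$.

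\textbf{Step 3 (main obstacle): spanning set in the free algebra.} One must show that the highest weight vectors of weight $\langle\lambda\rangle$ in $F_m^n$ are exactly the linear combinations of the polynomials $f_{T_{\langle\lambda\rangle}}$. Each $f_{T_{\langle\lambda\rangle}}$ is a highest weight vector: it has the right multidegree, and it is a product of standard polynomials in the variables of each of the $2k$ types, so it is killed by the elementary substitutions $x_{i}\mapsto x_i+x_j$ with $j<i$ inside a single type. For the reverse inclusion I would use Schur--Weyl duality. The weight-$\langle\lambda\rangle$ space of $F_m^n$ is spanned by monomials, and the variables of distinct types $y_{\cdot,g}$ and $z_{\cdot,g}$ are permuted independently. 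Multilinearizing then identifies the weight space with a space of type $P_{\langle n\rangle}$, carrying the right action of $S_n$ on positions. Under this identification the $U$-invariants of weight $\langle\lambda\rangle$ become the image of the multilinear space under the column antisymmetrizers combined with the symmetrization that identifies variables along rows. That image is spanned by $f_{\langle\lambda\rangle}\sigma^{-1}$ for $\sigma\in S_n$, and these are precisely the $f_{T_{\langle\lambda\rangle}}$. This is the ordinary argument for a single $GL_m$ (see \cite[Theorem 12.4.12]{livro}), applied to each of the $2k$ blocks simultaneously; it needs no graded-involution specific input beyond the fact that distinct types of variables are independent.

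\textbf{Step 4: conclusion.} Combining Steps 1--3, the space of highest weight vectors of weight $\langle\lambda\rangle$ in $F_m^n(A)$ is spanned by the images $\pi(f_{T_{\langle\lambda\rangle}})$. Its dimension $\tilde m_{\langle\lambda\rangle}$ is therefore the maximal number of such images that are linearly independent modulo $Id^{(G,*)}(A)$. In particular, $\tilde m_{\langle\lambda\rangle}\neq 0$ if and only if some $f_{T_{\langle\lambda\rangle}}\notin Id^{(G,*)}(A)$.
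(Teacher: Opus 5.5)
Your argument is correct and is the standard proof behind this result, which the paper does not prove but simply quotes from \cite[Section 12.4]{livro} and \cite{OSV}. You identify $\tilde m_{\langle\lambda\rangle}$ with the dimension of the space of highest weight vectors of weight $\langle\lambda\rangle$, show that in $F_m^n$ this space is spanned by the $f_{T_{\langle\lambda\rangle}}=f_{\langle\lambda\rangle}\sigma^{-1}$ (Theorem 12.4.12 applied blockwise, using that the right $S_n$-action on positions commutes with substitutions), and pass to $F_m^n(A)$ by complete reducibility; the only slip is wording: $f_{\langle\lambda\rangle}$ is \emph{fixed} by the substitutions $x_i\mapsto x_i+x_j$, $j<i$, equivalently annihilated by the corresponding derivations, not killed by the substitutions themselves.
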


\begin{remark}\label{altura}
    Let $A$ be a \gi-algebra and write $A=\displaystyle\bigoplus_{g\in G} (A^{(g)})^+\oplus (A^{(g)})^-$, with $\dim_F((A^{(g_i)})^{\epsilon})=d_i^{\epsilon}, g_i\in G=\{g_1=1,g_2,\ldots,g_k\},\epsilon\in\{+,-\}$. For a multipartition $\langle \lambda \rangle =
(\lambda_1, \ldots, \lambda_{2k}) \vdash \langle n \rangle$, consider the multitableau $T_{\langle \lambda \rangle} = (T_{\lambda_1} , ~\ldots ,~ T_{\lambda_{2k}} )$ and let $f_{T_{\langle \lambda \rangle}}$ be the highest weight vector associated to $T_{\langle \lambda \rangle}$. If, for some $1\le i\le k$,  $h_1(\lambda_{2i-1})>d_i^+$ or $h_1(\lambda_{2i})>d_i^-$, then $f_{T_{\langle \lambda \rangle}} \in Id^{(G,*)}(A).$ As a consequence, $\tilde{m}_{\langle \lambda \rangle}= 0$ in (\ref{eq2coca-}).
\end{remark}

	 In the next sections, we determine the sets $Id^{(G,*)}(A)$ and $Id^{(G,*),z}(A)$, the explicit decomposition of $\chi_{\langle n\rangle}(A)$ and we obtain asymptotics for certain codimensions, when $A$ is the algebra $M_2(F)$ endowed with a $G$-graded involution $*$.

We end this section with some relevant auxiliary lemmas. In the next results, $F$ is a field of arbitrary characteristic.

\begin{lemma}\label{lem:vanish} Let $F$ be an infinite field and let $f$ be a polynomial over $F$ in $n$ commuting variables. If $f$ vanishes entirely on $F^n$, then $f$ equals the zero polynomial. \end{lemma}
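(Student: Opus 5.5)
I would argue by induction on the number $n$ of variables; this is the standard route.

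For $n=1$, a nonzero polynomial $f\in F[x_1]$ of degree $d$ has at most $d$ roots in $F$. Since $F$ is infinite, $f$ cannot vanish on all of $F$ unless $f=0$. The bound on roots holds because $F[x_1]$ is a Euclidean domain: each root $a$ gives a factor $x_1-a$, and $F$ has no zero divisors.

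For the inductive step, suppose the claim holds for $n-1$ variables, and let $f\in F[x_1,\ldots,x_n]$ vanish on $F^n$. Write
$$f=\sum_{i=0}^{d} f_i(x_1,\ldots,x_{n-1})\,x_n^i,$$
with each $f_i\in F[x_1,\ldots,x_{n-1}]$. Fix any point $(a_1,\ldots,a_{n-1})\in F^{n-1}$. The one-variable polynomial
$$\sum_{i=0}^{d} f_i(a_1,\ldots,a_{n-1})\,x_n^i\in F[x_n]$$
vanishes at every element of $F$. By the case $n=1$ it is the zero polynomial, so $f_i(a_1,\ldots,a_{n-1})=0$ for every $i$. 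Since the point was arbitrary, each $f_i$ vanishes on all of $F^{n-1}$. The induction hypothesis then gives $f_i=0$ for every $i$, and hence $f=0$.

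There is no real obstacle here. The only point needing care is that the argument uses commutativity of the variables and the infiniteness of $F$, but no assumption on the characteristic. This matches the paper's remark that these auxiliary lemmas hold over fields of arbitrary characteristic.
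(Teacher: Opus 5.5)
Your proof is correct. The base case uses the root bound for nonzero univariate polynomials over a field, and the inductive step specializes $x_1,\ldots,x_{n-1}$ and reads off the coefficients $f_i$; both steps are sound and use only that $F$ is infinite, with no assumption on the characteristic. The paper states this lemma without proof as a standard fact, and your induction on $n$ is the usual argument one would supply for it.
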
 

Given $\mathcal{F}=F \langle X|G,* \rangle$, we denote by $\mathcal{F}^C=F[X|G,*]$ the free associative and commutative \gi-algebra on $X$ over $F$. 

\begin{definition} Let $f\in \mathcal{F}$. We denote by $f^C\in \mathcal{F}^C$ the polynomial induced by $f$ through the relations $x_ix_j=x_jx_i$ for each $x_k\in\{y_{k,g},z_{k,g}\}$ with $g\in G$. \end{definition}

For instance, if $$f(y_{1,g_1}, z_{1,g_2})=y_{1,g_1}z_{1,g_2}+z_{1,g_2}y_{1,g_1}\in \mathcal{F},$$ then $f^C(y_{1,g_1}, z_{1,g_2})=2y_{1,g_1}z_{1,g_2}\in \mathcal{F}^C$. The polynomial $f^C$ can be zero even if $f\in \mathcal F$ is nonzero: $f(y_{1, g_1}, z_{1, g_2}, z_{1, g_3})=y_{1,g_1}z_{1,g_2}z_{1, g_3}-z_{1,g_2}z_{1, g_3}y_{1,g_1}\in \mathcal F$ satisfies $f(y_{1, g_1}, z_{1, g_2}, z_{1, g_3})^C=0\in \mathcal F^C$. The following lemma, crucial in the proofs of our main results, provides a sufficient condition under which the vanishing of $f^C$ implies the vanishing of $f$.

\begin{lemma}\label{lem:com} Let $F$ be a field and let $f\in \mathcal{F}$ be a polynomial not admitting two distinct monomials $M_1, M_2$ with $(M_1)^C=(M_2)^C$. If $f^C\in \mathcal{F}^C$ is the zero polynomial, the same holds for $f$. In particular, if $F$ is infinite, $f$ contains $n$ distinct variables and $f^C$ vanishes entirely on $F^n$, then $f$ is the zero polynomial. \end{lemma}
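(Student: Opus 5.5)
The plan is to compare coefficients monomial by monomial, using the hypothesis to make the passage $f\mapsto f^C$ injective on the monomials of $f$; the second assertion will then follow from Lemma~\ref{lem:vanish}.

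First, write $f=\sum_{i=1}^{r}\alpha_i M_i$, where the $M_i$ are pairwise distinct monomials in the variables $y_{k,g},z_{k,g}$ of $\mathcal{F}$ and $\alpha_i\in F$. The map $\mathcal{F}\to\mathcal{F}^C$, $h\mapsto h^C$, is the linear map sending each noncommutative monomial to its commutative image. Hence $f^C=\sum_{i=1}^r\alpha_i M_i^C$.

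By hypothesis, $M_i^C\neq M_j^C$ whenever $i\neq j$. Commutative monomials form a basis of $\mathcal{F}^C$, so the $M_i^C$ are distinct basis elements and no cancellation can occur in this sum. Therefore, if $f^C=0$, every $\alpha_i=0$, and so $f=0$.

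For the ``in particular'' part, suppose $F$ is infinite and $f$ involves $n$ distinct variables. We regard $f^C$ as an ordinary commutative polynomial in these $n$ variables. The variables $y_{k,g}$ and $z_{k,g}$ are all independent commuting indeterminates in $\mathcal{F}^C$, so evaluation on $F^n$ is meaningful.

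If $f^C$ vanishes on all of $F^n$, Lemma~\ref{lem:vanish} gives $f^C=0$ as a polynomial. The first part then gives $f=0$.

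The only point needing care is the claim that distinct commutative monomials are linearly independent in $\mathcal{F}^C$. This holds because $\mathcal{F}^C$ is, by definition, the polynomial ring on the set $X$ (equivalently on $Y\cup Z$), and the monomials of a polynomial ring form a basis of it. Beyond this, the argument is routine.
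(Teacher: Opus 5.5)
Your proof is correct and takes essentially the same approach as the paper. The hypothesis makes $M\mapsto M^C$ injective on the monomials of $f$, so $f^C=0$ forces every coefficient of $f$ to vanish, and the second assertion then follows from Lemma~\ref{lem:vanish}; your remark that distinct commutative monomials form a basis of $\mathcal{F}^C$ just spells out the step the paper leaves implicit.
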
 

\begin{proof} By hypothesis, distinct monomials in $f$ induce distinct monomials in $\mathcal F^C$, so $f^C=0$ implies $f=0$. The second statement follows directly from the first one and Lemma~\ref{lem:vanish}. \end{proof}

The next lemma is fundamental for determining the $T_{(G,*)}$-ideal of $M_2(F)$ when endowed with the involutions $\gamma_i$.

\begin{lemma}\label{lem:id-equiv}    Let $F$ be a field of characteristic different from $2$, let $G$ be a finite abelian group and let $A$ be an algebra. Suppose that $(A_1,G,*)$ and $(A_2,G,\star)$ are two $G$-graded involutions on $A$ such that
\[
\{(A_1^{(g)})^+,(A_1^{(g)})^-\}=\{(A_2^{(g)})^+,(A_2^{(g)})^-\}
\]
for every $g\in G$. Define $\varphi:F\langle X\mid G,*\rangle\to F\langle X\mid G,\star\rangle$ by
\[
\bigl(\varphi(y_{i,g}),\varphi(z_{i,g})\bigr)=
\begin{cases}
(\bar y_{i,g},\bar z_{i,g}),&\text{if }(A_1^{(g)})^+=(A_2^{(g)})^+,\\
(\bar z_{i,g},\bar y_{i,g}),&\text{otherwise},
\end{cases}
\]
and extend it multiplicatively and linearly. Then $\varphi$ is a bijection satisfying the following. For items (b) and (d), assume in addition that the map $g\mapsto\varepsilon_g\in\mathbb Z_2$, defined by $\varepsilon_g=0$ when $(A_1^{(g)})^+=(A_2^{(g)})^+$ and $\varepsilon_g=1$ otherwise, is a group homomorphism.

\begin{enumerate}[(a)]
   
    \item $\varphi(Id^{(G,*)}(A_1))= Id^{(G,\star)}(A_2)$;
    \item if $Id^{(G,*)}(A_1)=\langle f_1,\ldots,f_k\rangle_{T_{(G,*)}}$, then
\[
Id^{(G,\star)}(A_2)=\langle\varphi(f_1),\ldots,\varphi(f_k)\rangle_{T_{(G,\star)}};
\]
\item $\varphi(Id^{(G,*),z}(A_1))= Id^{(G,\star),z}(A_2)$;
\item if $Id^{(G,*),z}(A_1)=\langle h_1, \ldots, h_k\rangle^{T_{(G, *)}}$, then $$Id^{(G,\star),z}(A_2)=\langle \varphi(h_1), \ldots, \varphi(h_k)\rangle^{T_{(G, \star)}}.$$

\end{enumerate}

\end{lemma}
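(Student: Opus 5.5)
The whole argument rests on the fact that an admissible evaluation for $(A_1,G,*)$ is the same thing as an admissible evaluation for $(A_2,G,\star)$ after interchanging the roles of $y$ and $z$ in the degrees $g$ with $\varepsilon_g=1$. First, $\varphi$ is a bijection: it is the algebra isomorphism of free algebras induced by a bijection between the free generating sets $Y\cup Z$ and $\bar Y\cup\bar Z$. It preserves homogeneous degrees, since $y_{i,g}$ and $z_{i,g}$ are sent to variables of the same degree $g$. Its inverse is the map of the same form going the other way.

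For (a), take $f=f(y_{i,g},z_{i,g})$ and an admissible $\star$-evaluation $\bar y_{i,g}\mapsto b_{i,g}\in (A_2^{(g)})^+$, $\bar z_{i,g}\mapsto c_{i,g}\in (A_2^{(g)})^-$. Evaluating $\varphi(f)$ at this point is literally the same as evaluating $f$ at
\[
y_{i,g}\mapsto b_{i,g},\quad z_{i,g}\mapsto c_{i,g}\quad (\varepsilon_g=0), \qquad y_{i,g}\mapsto c_{i,g},\quad z_{i,g}\mapsto b_{i,g}\quad (\varepsilon_g=1).
\]
By the hypothesis $\{(A_1^{(g)})^+,(A_1^{(g)})^-\}=\{(A_2^{(g)})^+,(A_2^{(g)})^-\}$, this is an admissible $*$-evaluation. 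Conversely, every admissible $*$-evaluation arises this way. Hence the value sets of $f$ on $A_1$ and of $\varphi(f)$ on $A_2$ coincide. Taking the value set equal to $\{0\}$ gives (a). Taking it contained in the center of $A$ (the same algebra in both cases) gives (c); here we also note that $\varphi$ preserves the absence of a constant term. Applying the same reasoning to $\varphi^{-1}$ gives equality rather than just inclusion.

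For (b) and (d), the key step, and the only real obstacle, is to show that $\varphi$ conjugates the two semigroups of admissible substitutions: if $\psi$ is a graded endomorphism of $\mathcal F$ commuting with $*$, then $\varphi\psi\varphi^{-1}$ is a graded endomorphism commuting with $\star$. It is clearly graded. What remains is to check that it maps each $\bar y_{i,g}$ to a $\star$-symmetric element and each $\bar z_{i,g}$ to a $\star$-skew element.

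Here is where the homomorphism hypothesis on $\varepsilon$ enters. Take a monomial $w$ in $\mathcal F$ of degree $g$. Its $*$-parity (that is, whether $w^*=\pm$ reversed $w$) is determined by its number of $z$-variables. Under $\varphi$ the letters in degrees $h$ with $\varepsilon_h=1$ swap type. So I would prove that $\varphi$ sends $*$-symmetric (respectively skew) homogeneous elements of degree $g$ to $\star$-symmetric (respectively skew) ones when $\varepsilon_g=0$, and swaps these roles when $\varepsilon_g=1$. Write the monomial as $w=x_1\cdots x_t$ with $\deg x_j=h_j$ and $\prod h_j=g$. Then $\varepsilon_g=\sum_j\varepsilon_{h_j}$, and the number of letters that change type has the parity of $\varepsilon_g$. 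Apply this to the symmetrization $w+w^*$ and the skew-symmetrization $w-w^*$. Since $\varphi$ commutes with reversal of monomials, we get $\varphi(u^*)=(-1)^{\varepsilon_g}\varphi(u)^\star$ for $u$ homogeneous of degree $g$, which is exactly the needed statement.

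With this in hand, $\varphi(\psi(f))=(\varphi\psi\varphi^{-1})(\varphi(f))$. Hence $\varphi$ maps the $T_{(G,*)}$-ideal (respectively $T_{(G,*)}$-space) generated by the $f_i$ (respectively $h_i$) into the one generated by the $\varphi(f_i)$ (respectively $\varphi(h_i)$). Since $\varphi$ is multiplicative, products with arbitrary elements are also carried over in the ideal case. The reverse inclusion follows by the symmetric argument with $\varphi^{-1}$. Combined with (a) and (c), this gives (b) and (d).
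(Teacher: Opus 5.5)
Your proposal is correct and follows essentially the same route as the paper. Part (a) pulls back admissible evaluations through $\varphi$, and parts (b) and (d) rest on the same key identity $\varphi(u^*)=(-1)^{\varepsilon_g}\varphi(u)^\star$, which lets conjugation by $\varphi$ carry $(G,*)$-endomorphisms to $(G,\star)$-endomorphisms. The one small difference is in (c): you compare the value sets of $f$ and $\varphi(f)$ directly, whereas the paper goes through the identities $[f,P]$ and item (a); your version is slightly more direct.
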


    \begin{proof}
For each \(g\in G\), set

$$
\varepsilon_g=
\begin{cases}
0,&\text{if }(A_1^{(g)})^+=(A_2^{(g)})^+,\\
1,&\text{otherwise}.
\end{cases}
$$

By hypothesis, if \(\varepsilon_g=1\), then $(A_1^{(g)})^+=(A_2^{(g)})^-$ and $(A_1^{(g)})^-=(A_2^{(g)})^+.
$

Thus \(\varphi\) is determined on the generators by

$$
\varphi(y_{i,g})=
\begin{cases}
\bar y_{i,g},&\varepsilon_g=0,\\
\bar z_{i,g},&\varepsilon_g=1,
\end{cases}
\qquad
\varphi(z_{i,g})=
\begin{cases}
\bar z_{i,g},&\varepsilon_g=0,\\
\bar y_{i,g},&\varepsilon_g=1.
\end{cases}
$$

Define $
\psi:F\langle X\mid G,\star\rangle\to
F\langle X\mid G,*\rangle
$ on the generators by

$$
\psi(\bar y_{i,g})=
\begin{cases}
y_{i,g},&\varepsilon_g=0,\\
z_{i,g},&\varepsilon_g=1,
\end{cases}
\qquad
\psi(\bar z_{i,g})=
\begin{cases}
z_{i,g},&\varepsilon_g=0,\\
y_{i,g},&\varepsilon_g=1,
\end{cases}
$$

and extend it multiplicatively and linearly. Then $
\psi\circ\varphi=\operatorname{id}_{F\langle X\mid G,*\rangle}$ and $
\varphi\circ\psi=\operatorname{id}_{F\langle X\mid G,\star\rangle}.
$ Hence \(\varphi\) is a bijection. We proceed to the proof of items (a)-(d).

\begin{enumerate}[(a)]
    \item Let \(f\in Id^{(G,*)}(A_1)\) and let $
\eta:F\langle X\mid G,\star\rangle\to A
$ be an arbitrary admissible \((G,\star)\)-evaluation. Define $
\widetilde{\eta}
=
\eta\circ\varphi:
F\langle X\mid G,*\rangle\to A.
$ We claim that \(\widetilde{\eta}\) is an admissible \((G,*)\)-evaluation. Indeed, fix \(g\in G\). If \(\varepsilon_g=0\), then
$
\widetilde{\eta}(y_{i,g})
=
\eta(\bar y_{i,g})
\in (A_2^{(g)})^+
=
(A_1^{(g)})^+,
$
and
$\widetilde{\eta}(z_{i,g})
=\eta(\bar z_{i,g})\in (A_2^{(g)})^-=
(A_1^{(g)})^-$. If $\varepsilon_g=1$, then
$\widetilde{\eta}(y_{i,g})=\eta(\bar z_{i,g})\in (A_2^{(g)})^-=
(A_1^{(g)})^+,$
and
$
\widetilde{\eta}(z_{i,g})
=
\eta(\bar y_{i,g})
\in (A_2^{(g)})^+
=
(A_1^{(g)})^-.
$
Therefore \(\widetilde{\eta}\) is admissible. Since \(f\in Id^{(G,*)}(A_1)\), we obtain $$
0=\widetilde{\eta}(f)
=(\eta\circ\varphi)(f)
=\eta(\varphi(f)).
$$
As \(\eta\) is arbitrary, we have $\varphi(f)\in Id^{(G,\star)}(A_2)
$ and then
$\varphi\bigl(Id^{(G,*)}(A_1)\bigr)
\subseteq
Id^{(G,\star)}(A_2)
$. The other inclusion follows similarly. 


\item 
Suppose now that $
Id^{(G,*)}(A_1)
=
\langle f_1,\ldots,f_k\rangle_{T_{(G,*)}}
$. Observe that \(\varphi\) is an isomorphism of the corresponding free graded algebras which sends variables of each prescribed symmetry and homogeneous degree to variables of the corresponding admissible type for \((G,\star)\). From hypothesis, the map $\varepsilon:G\to\mathbb Z_2$ is a group homomorphism, hence for every homogeneous polynomial $f$ of degree $g$ we have

$$
\varphi(f^*)=(-1)^{\varepsilon_g}\varphi(f)^\star.
$$
Indeed, for a monomial $M=w_{1,g_1}\cdots w_{r,g_r}$ of degree $g=g_1\cdots g_r$,
\[
\begin{aligned}
\varphi(M^*)
=(-1)^{\varepsilon_{g_1}+\cdots+\varepsilon_{g_r}}\varphi(M)^\star\
=(-1)^{\varepsilon_g}\varphi(M)^\star,
\end{aligned}
\]
where the last equality follows precisely from the fact that $\varepsilon$ is a group homomorphism.

Thus conjugation by $\varphi$ gives a bijection between the $(G,*)$-endomorphisms of $F\langle X\mid G,*\rangle$ and the $(G,\star)$-endomorphisms of $F\langle X\mid G,\star\rangle$. Indeed, if $\alpha$ is a $(G,*)$-endomorphism, then
$$
\beta=\varphi\circ\alpha\circ\varphi^{-1}:F\langle X| G,\star\rangle\to 
F\langle X| G,\star\rangle$$
preserves the grading and, for every homogeneous $f\in F\langle X\mid G, \star\rangle$ of degree $g$,
$$
\begin{aligned}
\beta(f^\star)
&=(-1)^{\varepsilon_g}\varphi\bigl(\alpha(\varphi^{-1}(f))^*\bigr)\\
&=(-1)^{2\varepsilon_g}\varphi\bigl(\alpha(\varphi^{-1}(f))\bigr)^\star\\
&=\beta(f)^\star.
\end{aligned}
$$
The converse follows by applying the same argument to $\varphi^{-1}$. Consequently,
$$
\varphi\left(
\langle f_1,\ldots,f_k\rangle_{T_{(G,*)}}
\right)
=
\langle
\varphi(f_1),\ldots,\varphi(f_k)
\rangle_{T_{(G,\star)}}.
$$
From the equality proved in item (a), we conclude that
$$
Id^{(G,\star)}(A_2)
=
\langle
\varphi(f_1),\ldots,\varphi(f_k)
\rangle_{T_{(G,\star)}}.
$$

\item Fix $f\in Id^{(G,*),z}(A_1)$. In particular, for every $P\in F\langle X\mid G,*\rangle$, the polynomial $[f, P]$ is an identity for $(A_1, G, *)$. From item (a), $\varphi([f, P])$ is an identity for $(A_2, G, \star)$. It is clear that $\varphi([f, P])=[\varphi(f), \varphi(P)]$. Since $\varphi$ is a bijection, $\varphi(P)$ represents an arbitrary element of $F\langle X\mid G,\star\rangle$. Thus $\varphi(f)\in Id^{(G,\star),z}(A_2)$. The other inclusion follows in the same way.

\item The proof follows similarly to the one in item (b). 
\end{enumerate}



\end{proof}

The following result is a direct application of Lemma~\ref{lem:id-equiv}.

\begin{corollary}\label{cor:equal}
    Let $F$ be a field of characteristic zero and let $(A_1, G, *)$ and $(A_2, G, \star)$ be as in Lemma~\ref{lem:id-equiv}. Then for every integer $n\ge 1$ we have $c_n^{(G, *)}(A_1)=c_n^{(G, \star)}(A_2), c_n^{(G, *), z}(A_1)=c_n^{(G, \star), z}(A_2)$ and $c_n^{(G, *), \delta}(A_1)=c_n^{(G, \star), \delta}(A_2)$. 
\end{corollary}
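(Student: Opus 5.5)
The plan is to realize $\varphi$ as a linear isomorphism from $P_n^{(G,*)}$ onto $P_n^{(G,\star)}$ and to check that it carries each relevant subspace onto its counterpart. Item (a) of Lemma~\ref{lem:id-equiv} gives the identities and item (c) gives the central polynomials. The three codimension equalities then follow by comparing dimensions. Only items (a) and (c) are used, so the extra homomorphism hypothesis needed for (b) and (d) is not required.

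First, $\varphi$ sends each variable $y_{i,g}$ or $z_{i,g}$ to a single variable $\bar z_{i,g}$ or $\bar y_{i,g}$. The homogeneous degree $g$ and the index $i$ are unchanged. Hence $\varphi$ maps multilinear monomials of degree $n$ bijectively onto multilinear monomials of degree $n$, so it restricts to a linear isomorphism $P_n^{(G,*)}\to P_n^{(G,\star)}$. Its inverse is $\psi$, which by the same argument maps $P_n^{(G,\star)}$ into $P_n^{(G,*)}$.

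Next I would show that
\[
\varphi\bigl(P_n^{(G,*)}\cap Id^{(G,*)}(A_1)\bigr)=P_n^{(G,\star)}\cap Id^{(G,\star)}(A_2).
\]
This holds because $\varphi$ is a bijection that preserves both the multilinear space and, by Lemma~\ref{lem:id-equiv}(a), the identities. In the same way, Lemma~\ref{lem:id-equiv}(c) gives the analogous equality with $Id^{(G,*),z}(A_1)$ and $Id^{(G,\star),z}(A_2)$.

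The isomorphism $\varphi$ therefore induces isomorphisms of the three quotient spaces:
\[
\frac{P_n^{(G,*)}}{P_n^{(G,*)}\cap Id^{(G,*)}(A_1)},\qquad
\frac{P_n^{(G,*)}}{P_n^{(G,*)}\cap Id^{(G,*),z}(A_1)},\qquad
\frac{P_n^{(G,*)}\cap Id^{(G,*),z}(A_1)}{P_n^{(G,*)}\cap Id^{(G,*)}(A_1)}
\]
onto their $(G,\star)$ counterparts. Equating dimensions yields $c_n^{(G,*)}(A_1)=c_n^{(G,\star)}(A_2)$, $c_n^{(G,*),z}(A_1)=c_n^{(G,\star),z}(A_2)$ and $c_n^{(G,*),\delta}(A_1)=c_n^{(G,\star),\delta}(A_2)$. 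Alternatively, the third equality follows from the first two via $c_n=c_n^{z}+c_n^{\delta}$.

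There is no real obstacle here. The only point needing care is that the inclusions in (a) and (c) are genuine equalities of sets, so intersecting with the multilinear spaces preserves equality.
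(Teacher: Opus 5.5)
Your proposal is correct and is essentially the argument the paper intends when it calls this corollary a direct application of Lemma~\ref{lem:id-equiv}: $\varphi$ restricts to a linear isomorphism $P_n^{(G,*)}\to P_n^{(G,\star)}$, and by items (a) and (c) (using injectivity of $\varphi$) it carries $P_n^{(G,*)}\cap Id^{(G,*)}(A_1)$ and $P_n^{(G,*)}\cap Id^{(G,*),z}(A_1)$ onto their $(G,\star)$ counterparts, so the three quotients have equal dimensions. You are also right that only (a) and (c) are needed, so the homomorphism hypothesis on $g\mapsto\varepsilon_g$ plays no role here.
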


\section{$M_2(F)$ with elementary $G$-graded involution}

Let $G$ be a finite abelian group and consider $M_2(F)$ equipped with a nontrivial elementary $G$-grading induced by the pair ${\bf g}=(g_1,g_2)\in G^2$ and endowed with a graded involution $\gamma_i$ as in Theorem \ref{thm:elementary}. In this case, we write $\mathbb{M}_{{\bf g}}^{\gamma_i}.$ First, we will deal with the case that in ${\bf g}=(g_1,g_2)$, we have $g_1^2=g_2^2$. Since $G$ is an abelian group, this is equivalent to $g_1^{-1}g_2=g_2^{-1}g_1$. Let $g=g_1^{-1}g_2.$ Then $g^2=1$, $M_2(F)^{(1)}=Fe_{11}+Fe_{22}, M_2(F)^{(g)}=Fe_{12}+Fe_{21}$ and $ M_2(F)^{(h)}=\{0\}$ if $h\neq 1,g,$ and this algebra is endowed with the involutions $\gamma_1,\gamma_2$ and $\gamma_3$.

This case was recently treated by Cruz and Vieira in \cite{Ana} in the context of $*$-superalgebras, that is, $\mathbb{Z}_2$-graded algebras with graded involutions. In the next theorem, $x_{1,h}\in \{y_{1,h},z_{1,h}\}$ with $h\neq 1,g.$

\begin{theorem}\cite[Theorems 4.9 and 5.8 and Corollary 6.3]{Ana}
    Let $F$ be an infinite field of characteristic different from $2$ and consider $M_2(F)$ equipped with a nontrivial elementary $G$-grading induced by the pair ${\bf g}=(g_1,g_2)\in G^2,$ with $g_1^2=g_2^2$ and endowed with the involutions $\gamma_1,\gamma_2$ and $\gamma_3.$ Then:
    \begin{enumerate}
        \item $Id^{(G,*)}(\mathbb{M}_{{\bf g}}^{\gamma_1})=\langle z_{1,1}, x_{1,h}\rangle_{T_{(G,*)}};$
        \item $Id^{(G,*)}(\mathbb{M}_{{\bf g}}^{\gamma_2})=\langle y_{1,g},[y_{1,1},y_{2,1}], [y_{1,1},z_{1,1}], [z_{1,1},z_{2,1}], x_{1,h}\rangle_{T_{(G,*)}};$
        \item $Id^{(G,*)}(\mathbb{M}_{{\bf g}}^{\gamma_3})=\langle z_{1,g},[y_{1,1},y_{2,1}], [y_{1,1},z_{1,1}], [z_{1,1},z_{2,1}], x_{1,h}\rangle_{T_{(G,*)}};$
    \end{enumerate}
\end{theorem}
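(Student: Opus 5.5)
The inclusions $\supseteq$ in (1)--(3) will be checked directly from the symmetric/skew decompositions. The reverse inclusions will be proved the standard way: reduce multilinear polynomials modulo the candidate $T_{(G,*)}$-ideal $I$ to a spanning set of normal forms, then show these normal forms are linearly independent modulo $Id^{(G,*)}$ using generic elements together with Lemma~\ref{lem:com}. Throughout, $M_2(F)^{(h)}=0$ for $h\neq 1,g$, so $x_{1,h}\in Id$ in every case, and every monomial involving such a variable lies in $I$. We may therefore restrict to multilinear polynomials in variables of degrees $1$ and $g$.

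The first step is to record the decompositions. For $\gamma_1$ we have $(M^{(1)})^+=Fe_{11}+Fe_{22}$, $(M^{(1)})^-=0$, $(M^{(g)})^+=F(e_{12}+e_{21})$ and $(M^{(g)})^-=F(e_{12}-e_{21})$, so $z_{1,1}\in Id$. For $\gamma_2$ we have $(M^{(1)})^+=FI$, $(M^{(1)})^-=F(e_{11}-e_{22})$ and $(M^{(g)})^-=M^{(g)}$; hence $y_{1,g}\in Id$, and since $M^{(1)}$ is commutative all degree-$1$ commutators vanish. For $\gamma_3$ the situation is the same with $M^{(g)}=(M^{(g)})^+$, so $z_{1,g}\in Id$.

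The second step is the reduction. In case (1), the $T_{(G,*)}$-ideal generated by $z_{1,1}$ contains $f-f^*$ for every $f$ of degree $1$. In particular it contains $[y_{i,1},y_{j,1}]$ and $w_{i,g}w_{j,g}-(w_{i,g}w_{j,g})^*$, which yields the relations $y_{i,g}y_{j,g}\equiv y_{j,g}y_{i,g}$, $z_{i,g}z_{j,g}\equiv z_{j,g}z_{i,g}$ and $y_{i,g}z_{j,g}\equiv -z_{j,g}y_{i,g}$. It also gives that degree-$1$ words in odd variables are symmetric, which lets them be reordered. Using these relations, I will show that every multilinear word reduces, modulo $I$, to a signed ordered word. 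The degree-$1$ variables are placed according to the parity of the number of degree-$g$ variables preceding them (which tells whether they act through $e_{11}$ or $e_{22}$), and the degree-$g$ variables are sorted with the $y$'s and $z$'s in increasing index. Cases (2) and (3) are simpler. There $y_{1,g}$ (respectively $z_{1,g}$) kills one type of odd variable, and degree-$1$ variables commute modulo $I$. Moreover, for $\gamma_2$ a product of two skew odd elements lies in $M^{(1)}$, which is commutative modulo $I$. This lets the odd variables be sorted up to a controlled sign, while the degree-$1$ symmetric variables, being central modulo identities of the form $[y,\cdot]$, can be collected in front.

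The third step is independence. I will substitute generic elements: $y_{i,1}\mapsto a_ie_{11}+b_ie_{22}$ (or $a_iI$, respectively $b_i(e_{11}-e_{22})$, in cases (2) and (3)), and $y_{i,g},z_{i,g}\mapsto c_i(e_{12}\pm e_{21})$ with commuting indeterminates. Each normal form then evaluates to a matrix whose entries are distinct monomials in the $a_i,b_i,c_i$. Normal forms were chosen so that distinct ones have distinct commutative images, so Lemma~\ref{lem:com} applies: a linear combination vanishing on all evaluations must be zero.

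The main obstacle is the reduction in case (1), since $I$ is generated only by $z_{1,1}$. Deriving enough commutation relations among odd variables, and showing that the remaining words collapse to the parity-indexed normal forms with exactly the number of independent evaluations, is the delicate combinatorial part. My plan is to induct on the number of odd variables: pair consecutive odd variables into degree-$1$ blocks and use symmetry of those blocks modulo $I$.
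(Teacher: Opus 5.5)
Your overall strategy (reduce to normal forms modulo the candidate ideal, then separate the normal forms by generic evaluation) is the right one. Case (1) is sound in outline. Cases (2) and (3), which you call simpler, contain a genuine error. Note that the paper does not prove this theorem; it quotes it from Cruz and Vieira. The closest in-paper argument is Theorem~\ref{thm:elem-g} with Remark~\ref{rem:sum}.

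\textbf{Case (1).} For $\gamma_1$ both odd components are one-dimensional. Your relations therefore do let you sort all degree-$g$ variables. The normal forms are indexed by the split $(E,O)$ of the degree-$1$ variables according to the parity of the number of odd variables preceding them. One correction: Lemma~\ref{lem:com} cannot be what separates these normal forms. In a fixed multidegree they all have the same commutative image, so its hypothesis fails. What works is that the $(1,\ast)$-entry of the normal form indexed by $(E,O)$ is $\pm\prod_j c_j\prod_{i\in E}a_i\prod_{i\in O}b_i$. These are distinct monomials for distinct $(E,O)$, and you then conclude with Lemma~\ref{lem:vanish}.

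\textbf{Why cases (2) and (3) fail.} Here the odd component is two-dimensional: $(M^{(g)})^-=Fe_{12}+Fe_{21}$ for $\gamma_2$, and $(M^{(g)})^+=Fe_{12}+Fe_{21}$ for $\gamma_3$. So the odd variables cannot be sorted up to sign. Substituting $z_{i,g}\mapsto c_ie_{12}+d_ie_{21}$ gives
\[ z_{1,g}z_{2,g}\mapsto c_1d_2e_{11}+d_1c_2e_{22},\qquad z_{2,g}z_{1,g}\mapsto c_2d_1e_{11}+d_2c_1e_{22}. \]
These are linearly independent, so no relation $z_{1,g}z_{2,g}\equiv\pm z_{2,g}z_{1,g}$ holds even modulo the identities. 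Commutativity of degree-$1$ products only lets you move pairs of consecutive odd variables as blocks. Such moves preserve the parity of each odd variable's position. Your spanning set is therefore too small. Your substitution $z_{i,g}\mapsto c_i(e_{12}-e_{21})$ is also not generic for $\gamma_2,\gamma_3$: it kills $[z_{1,g},z_{2,g}]$, which is not an identity.

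\textbf{The correct normal forms.} They are
\[ y_{s_1,1}\cdots y_{s_j,1}\,z_{t_1,1}\cdots z_{t_k,1}\,u_1u_2\cdots u_m, \]
where the odd variables in odd positions are in increasing order and those in even positions are in increasing order. For fixed variables this gives $\binom{m}{\lfloor m/2\rfloor}$ normal forms, not one. This is the pattern of Remark~\ref{rem:sum}, with ``degree $g$ versus $g^{-1}$'' replaced by ``odd versus even position''.

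\textbf{Relations you still need.} Two relations are missing from your plan.
\begin{enumerate}
\item $u_1u_2u_3\equiv u_3u_2u_1$ for consecutive odd variables. The difference is symmetric (for $\gamma_2$), respectively skew (for $\gamma_3$), of degree $g$, so it follows from $y_{1,g}$, respectively $z_{1,g}$. Together with block commutation, this sorts each parity class.
\item Anticommutation $z_{1,1}\circ u\in I$ for odd $u$, again a consequence of $y_{1,g}$, respectively $z_{1,g}$. You need it to bring the skew degree-$1$ variables to the front. Only the symmetric degree-$1$ variables are central; $z_{i,1}\mapsto b_i(e_{11}-e_{22})$ is not.
\end{enumerate}

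\textbf{Independence in cases (2) and (3).} Use the generic substitution $u_i\mapsto c_ie_{12}+d_ie_{21}$. The $(1,\ast)$-entry of a normal form is then $a_Sb_T\prod_{\text{odd positions}}c\prod_{\text{even positions}}d$. These monomials are distinct for distinct normal forms, and Lemma~\ref{lem:vanish} finishes the argument.
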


In the same paper, the authors described the generators of the $T_{(G,*)}$-space of central polynomials of $M_2(F)$.

\begin{theorem}\cite[Theorems 4.11 and 5.11 and Corollary 6.4]{Ana} Let $F$ be an infinite field of characteristic different from 2 and consider $M_2(F)$ equipped with a nontrivial elementary $G$-grading induced by the pair ${\bf g}=(g_1,g_2)\in G^2,$ with $g_1^2=g_2^2$ and endowed with the involutions $\gamma_1,\gamma_2$ and $\gamma_3.$ Then:
\begin{enumerate}
    \item $Id^{(G,*),z}(\mathbb{M}_{{\bf g}}^{\gamma_1})=\langle y_{1,g}y_{2,g}, z_{1,g}z_{2,g}, Id^{(G,*)}(\mathbb{M}_{{\bf g}}^{\gamma_1})\rangle_{T_{(G,*)}};$
    \item $Id^{(G,*),z}(\mathbb{M}_{{\bf g}}^{\gamma_2})=\langle y_{1,1}, Id^{(G,*)}(\mathbb{M}_{{\bf g}}^{\gamma_2})\rangle_{T_{(G,*)}};$
    \item $Id^{(G,*),z}(\mathbb{M}_{{\bf g}}^{\gamma_3})=\langle y_{1,1}, Id^{(G,*)}(\mathbb{M}_{{\bf g}}^{\gamma_3})\rangle_{T_{(G,*)}};$
\end{enumerate}
    
\end{theorem}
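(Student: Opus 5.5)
The plan is to prove both inclusions for each $\gamma_i$ directly, with $\mathbb{M}_{{\bf g}}^{\gamma_i}$ described componentwise. Here $M_2(F)^{(1)}=Fe_{11}+Fe_{22}$ and $M_2(F)^{(g)}=Fe_{12}+Fe_{21}$, with $g^2=1$.

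For $\gamma_1$ the decomposition is
\[
(M^{(1)})^+=Fe_{11}+Fe_{22},\quad (M^{(1)})^-=0,\quad (M^{(g)})^{\pm}=F(e_{12}\pm e_{21}).
\]
For $\gamma_2$ it is
\[
(M^{(1)})^+=FI_2,\quad (M^{(1)})^-=F(e_{11}-e_{22}),\quad (M^{(g)})^+=0,\quad (M^{(g)})^-=M^{(g)}.
\]
For $\gamma_3$ it is the same as for $\gamma_2$, except that $M^{(g)}$ is entirely symmetric.

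\textbf{The inclusion $\supseteq$.} This is a direct check, using Remark~\ref{rem:center}.
\begin{itemize}
\item For $\gamma_1$: $(e_{12}\pm e_{21})^2=\pm I_2$, so $y_{1,g}y_{2,g}$ and $z_{1,g}z_{2,g}$ evaluate to scalars.
\item For $\gamma_2$ and $\gamma_3$: $y_{1,1}$ evaluates into $FI_2$.
\end{itemize}

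\textbf{The inclusion $\subseteq$.} Since $F$ is infinite, the $T_{(G,*)}$-space is spanned by multihomogeneous elements, and it is enough to treat multihomogeneous $f$. Let $V$ be the $T_{(G,*)}$-space generated by the listed polynomials together with the $T_{(G,*)}$-ideal of identities from the previous theorem. Take $f$ central and of $G$-degree $h$.
\begin{itemize}
\item If $h\neq 1$, every evaluation lies in $M^{(h)}$, and $M^{(h)}\cap FI_2=0$. So $f$ is an identity and lies in $V$.
\item So assume $h=1$. I reduce $f$ modulo $V$ to a normal form and show that a central normal form must vanish.
\end{itemize}

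\textbf{Normal form for $\gamma_2$ and $\gamma_3$.} In these cases $f$ involves only $z_{i,1}$, the variables $y_{i,1}$, and the $g$-variables (skew for $\gamma_2$, symmetric for $\gamma_3$).
\begin{enumerate}
\item Every monomial containing some $y_{i,1}$ can be replaced, modulo the ideal of identities, by one with that $y_{i,1}$ moved to the front, using $[y_{1,1},x]$-type consequences: $y_{1,1}$ is scalar-valued. Then $y_{i,1}\cdot m\equiv y_{i,1}\cdot(\text{central part of } m)$ modulo identities.
\item Here I use that substituting $y_{i,1}\mapsto y_{i,1}m$ is \emph{not} allowed in a $T$-space. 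Instead I argue that $y_{i,1}m - m y_{i,1}$ is an identity and that $y_{i,1}m\in V$ exactly when $m\in V$. This is checked by evaluation, since $y_{i,1}$ takes arbitrary scalar values.
\item This reduces to polynomials in the $z_{i,1}$ and $x_{j,g}$ only.
\item Modulo identities, $z_{i,1}$ commute with each other and anticommute with $g$-variables. Every degree-$1$ monomial is $\pm z_{1,1}\cdots z_{a,1}\,x_{\sigma(1),g}\cdots x_{\sigma(2b),g}$.
\item Evaluate on $z_{i,1}=\alpha_i(e_{11}-e_{22})$ and $x_{j,g}=\beta_je_{12}+\delta_je_{21}$. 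The $(1,1)$ and $(2,2)$ entries are commutative polynomials in the parameters. Centrality says their difference vanishes.
\item Lemma~\ref{lem:com} and Lemma~\ref{lem:vanish} then force the normal form to be zero, once one checks that distinct normal-form monomials have distinct commutative images in the relevant entry.
\end{enumerate}

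\textbf{Normal form for $\gamma_1$.} There are no skew degree-$1$ variables, and the $y_{i,1}$ are diagonal and commute. Here the generators $y_{1,g}y_{2,g}$ and $z_{1,g}z_{2,g}$ say that consecutive pairs of equal symmetry type in $g$ are scalar.
\begin{itemize}
\item I would show that, modulo $V$, every degree-$1$ multilinear polynomial is a combination of forms $y_{1,1}\cdots y_{a,1}\,u$, where $u$ is a product of $g$-variables in which $y_g$'s and $z_g$'s alternate, or $u$ is empty.
\item I would then evaluate generically, $y_{i,1}=\mathrm{diag}(a_i,d_i)$ and $y_{j,g}, z_{j,g}$ scalar multiples of $e_{12}\pm e_{21}$. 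Comparing the two diagonal entries should give independence via Lemma~\ref{lem:com}.
\end{itemize}

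\textbf{Main obstacle.} The step I expect to be hardest is the reduction inside the $T$-space, which is not an ideal, for $\gamma_1$. Expressions like $y_{1,1}\,y_{1,g}y_{2,g}$ are not automatically in $V$; indeed they are not central. So one cannot simply cancel central pairs. I would first handle $f$ with no $y_{i,1}$, where pairs can be collapsed to scalars; this requires identities such as $y_{1,g}y_{2,g}x - xy_{1,g}y_{2,g}\in Id$. I would then treat the general case by showing that the diagonal coefficient polynomial must be symmetric under $a_i\leftrightarrow d_i$. This should force $f$ to be built from central pieces.
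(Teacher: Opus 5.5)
The inclusion $\supseteq$ and the reduction to $G$-degree $1$ are fine. Both normal-form arguments for $\subseteq$ have genuine gaps.

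\textbf{The cases $\gamma_2,\gamma_3$.} Your final step fails. The normal form from steps 1--4 is taken modulo identities only, after stripping the scalar variables $y_{i,1}$. Nothing in it removes elements of $V$, so nonzero central elements survive. For example, $z_{1,1}z_{2,1}$ evaluates to $\alpha_1\alpha_2I_2$. For $\gamma_2$, the normal form $z_{1,g}z_{2,g}+z_{2,g}z_{1,g}$ evaluates to $(\beta_1\delta_2+\beta_2\delta_1)I_2$. In both, the $(1,1)$ and $(2,2)$ entries agree, so ``a central normal form must vanish'' is false. Two further problems:
\begin{enumerate}
\item The normal form is incomplete. $x_{1,g}x_{2,g}x_{3,g}-x_{3,g}x_{2,g}x_{1,g}$ has the wrong symmetry type in degree $g$, hence is an identity. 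So $x_{1,g}x_{2,g}x_{3,g}x_{4,g}$ and $x_{3,g}x_{2,g}x_{1,g}x_{4,g}$ have the same commutative image, and your ``distinct images'' check fails.
\item Step 2 decides membership in $V$ ``by evaluation'', but evaluation only detects centrality.
\end{enumerate}
A normal-form proof would have to show that central normal-form combinations split into explicit symmetric pieces lying in $V$. This is what the paper does for $g^2\neq 1$ via Lemmas~\ref{lem:change-central} and~\ref{lem:U}. Here, though, none of this is needed. Since $(M_2(F)^{(1)})^+=FI_2$, the $T_{(G,*)}$-space generated by $y_{1,1}$ is exactly $(\mathcal F^{(1)})^+$. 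Every admissible evaluation $\eta$ commutes with $*$. So for a central $f$ of $G$-degree $1$ we get $\eta(f-f^*)=\lambda I_2-(\lambda I_2)^*=0$. Hence $f=\tfrac12(f+f^*)+\tfrac12(f-f^*)$ lies in $(\mathcal F^{(1)})^++Id^{(G,*)}(\mathbb M_{\bf g}^{\gamma_i})$.

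\textbf{The case $\gamma_1$.} Your proposed normal form modulo $V$ is false, and this is where the real content lies. Take $y_{i,1}\mapsto\mathrm{diag}(a_i,d_i)$ and $y_{j,g}\mapsto\beta_j(e_{12}+e_{21})$.
\begin{enumerate}
\item The polynomial $y_{1,1}y_{1,g}y_{2,g}$ evaluates to $\beta_1\beta_2\,\mathrm{diag}(a_1,d_1)$. It is not central, hence not in $V$. Yet its multidegree (two $y_{\cdot,g}$, no $z_{\cdot,g}$) admits no alternating $u$.
\item The root problem is that the $y_{i,1}$ cannot be moved to the front: passing a $g$-variable swaps the diagonal entries. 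Thus $y_{1,g}y_{1,1}y_{2,g}$ and $y_{1,1}y_{1,g}y_{2,g}$ have values $\beta_1\beta_2\,\mathrm{diag}(d_1,a_1)$ and $\beta_1\beta_2\,\mathrm{diag}(a_1,d_1)$. They are independent modulo identities, neither is central, and only their sum is.
\end{enumerate}
A correct normal form must record the set $T$ of variables $y_{i,1}$ standing after an odd number of $g$-variables. Let $c_T$ be the corresponding coefficient. Centrality then forces $c_T=\pm c_{T^c}$, with the minus sign exactly when the number of $z_{\cdot,g}$ is odd. One must then show that each such pair lies in $V$. For instance, write $y_S$ for the product of the $y_{i,1}$ with $i\in S$ and $y_S^{\mathrm{rev}}$ for its reverse. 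Substitute for $y_{1,g}$ in the generator $y_{1,g}y_{2,g}$ either of the symmetric degree-$g$ elements $y_S\,y_{1,g}\,y_T+y_T^{\mathrm{rev}}y_{1,g}\,y_S^{\mathrm{rev}}$ or $y_S\,z_{1,g}\,y_T-y_T^{\mathrm{rev}}z_{1,g}\,y_S^{\mathrm{rev}}$. Your last paragraph sees the obstruction, but ``this should force $f$ to be built from central pieces'' is exactly what must be proved. Also, with only $\mathrm{char}\,F\neq 2$ assumed, you cannot pass to multilinear polynomials as you do here; the normal form must handle multihomogeneous ones.
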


In the case where $M_2(F)$ is equipped with the elementary $G$-grading induced by a pair ${\bf g}=(g_1,g_2)\in G^2$, with $g_1^2\neq g_2^2,$ Theorem \ref{thm:elementary} implies that the only graded involutions on $M_2(F)$ are $\gamma_2$ and $\gamma_3.$ In this case, the $G$-grading is given by $M_2(F)^{(1)}=Fe_{11}+Fe_{22}, M_2(F)^{(g)}=Fe_{12}, M_2(F)^{(g^{-1})}=Fe_{21}$ where $g\in G$ satisfies $g^2\ne 1$ and $M_2(F)^{(h)}=\{0\},$ for all $h\neq 1,g,g^{-1}.$

In the next table, we provide the decomposition of $M_2(F)$ into symmetric and skew nonzero homogeneous components, when equipped with an elementary grading and endowed with the involutions $\gamma_2$ and $\gamma_3$.

\begin{table}[ht]\label{tbl:decompositionelem}
\centering
\begin{tabular}{|c|c|c|}
\hline
     {}& $\gamma_2$ & $\gamma_3$  \\ \hline
      $(M_2(F)^{(1)})^+$ & $F(e_{11}+e_{22})$ & $F(e_{11}+e_{22})$ \\ \hline
      $(M_2(F)^{(1)})^-$ & $F(e_{11}-e_{22})$ & $F(e_{11}-e_{22})$ \\ \hline
      $(M_2(F)^{(g)})^+$ & $\{0\}$ & $Fe_{12}$ \\ \hline
      $(M_2(F)^{(g)})^-$ & $Fe_{12}$ & $\{0\}$ \\ \hline
      $(M_2(F)^{(g^{-1})})^+$ & \{0\} & $Fe_{21}$ \\ \hline
      $(M_2(F)^{(g^{-1})})^-$ & $Fe_{21}$ & $\{0\}$ \\ \hline
\end{tabular}\vspace{0.3cm}\caption{$M_2(F)$ with elementary $G$-grading and its graded involutions}
\end{table}
\vspace{-0.5cm}



Looking at Table~\ref{tbl:decompositionelem}, we see that the algebra $\mathbb M^{\gamma_i}_{\bf{g}}$ with $i=2, 3$ have a decomposition closely related to a recently studied algebra. More precisely, in~\cite{BR26} we explored the algebra $M_2(F)$ with the elementary $\Z_2$-grading and transpose superinvolution $$\begin{pmatrix}
			a&b\\c&d
		\end{pmatrix}^{trp}=\begin{pmatrix}
		d&-b\\ c&a
		\end{pmatrix}.$$
The following decomposition is obtained:
\begin{align*}
    (M_2(F)^{(0)})^+=F(e_{11}+e_{22}),&\; (M_2(F)^{(0)})^-=F(e_{11}-e_{22}),  \\ (M_2(F)^{(1)})^+=Fe_{21},&\; (M_2(F)^{(1)})^-=Fe_{12}.
\end{align*}
We consider $ F\langle \overline{X}| *\rangle$, the free associative $*$-algebra with superinvolution  on the variables $\bar{x}_{i, j}$ with $\bar{x}=\bar{y}$ or $\bar{z}$, $i\ge 1$ and $j\in \{0, 1\}$. The variables $\bar{y}_{i, j}$ are symmetric of homogeneous degree $j$, while $\bar{z}_{i, j}$ are skew of homogeneous degree $j$. They denoted the algebra $M_2(F)$ with the elementary $\Z_2$-grading and transpose superinvolution by $\mathbb M$. In particular, when providing the description of the $T_2^*$-ideal $Id^*(\mathbb M)$, they proved the following result (see Definition 4.1, Remark 4.3 and the proof of Theorem 4.2 in~\cite{BR26}).

\begin{lemma}\label{lem:change}
Let $F$ be an infinite field of characteristic $\ne 2$. Let $R\in \mathcal F\langle \overline{X}| *\rangle$ be a multihomogeneous polynomial of degree $n$ such that every monomial appearing in $R$ has one of the following forms:
\begin{enumerate}[(i)]
\item $\alpha\cdot  \bar{y}_{s_1, 0}\cdots \bar{y}_{s_j, 0}\bar{z}_{t_1, 0}\cdots \bar{z}_{t_k, 0}$ with $j+k=n$;
    \item $\alpha\cdot  \bar{y}_{s_1, 0}\cdots \bar{y}_{s_j, 0}\bar{z}_{t_1, 0}\cdots \bar{z}_{t_k, 0}\cdot \bar{y}_{u_1, 1}\bar{z}_{v_1, 1}\cdots \bar{y}_{u_l, 1}\bar{z}_{v_l, 1}$ with $j+k+2l=n$.
\item $\alpha\cdot  \bar{y}_{s_1, 0}\cdots \bar{y}_{s_j, 0}\bar{z}_{t_1, 0}\cdots \bar{z}_{t_k, 0}\cdot \bar{z}_{v_1, 1}\bar{y}_{u_1, 1}\cdots \bar{z}_{v_l, 1}\bar{y}_{u_l, 1}$ with $j+k+2l=n$;
\item $\alpha\cdot  \bar{y}_{s_1, 0}\cdots \bar{y}_{s_j, 0}\bar{z}_{t_1, 0}\cdots \bar{z}_{t_k, 0}\cdot \bar{y}_{u_1, 1}\bar{z}_{v_1, 1}\cdots \bar{y}_{u_l, 1}\bar{z}_{v_l, 1}\bar{y}_{u_{l+1}, 1}$ with $j+k+2l+1=n$;
\item $\alpha\cdot  \bar{y}_{s_1, 0}\cdots \bar{y}_{s_j, 0}\bar{z}_{t_1, 0}\cdots \bar{z}_{t_k, 0}\cdot \bar{z}_{v_1, 1}\bar{y}_{u_1, 1}\cdots \bar{z}_{v_l, 1}\bar{y}_{u_l, 1}\bar{z}_{v_{l+1}, 1}$ with $j+k+2l+1=n$,

\end{enumerate}
where the sequences $s_i, t_i, u_i$ and $v_i$ are non decreasing and in the set $\{1, \ldots, n\}$. If $R\in Id^*(\mathbb M)$, then $R$ equals the zero polynomial.
\end{lemma}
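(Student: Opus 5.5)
The plan is to evaluate $R$ on generic elements of the homogeneous symmetric and skew components of $\mathbb M$. Each monomial then collapses to a scalar multiple of a single matrix unit, or of $e_{11}\pm e_{22}$. After that, the vanishing of $R$ reduces to the vanishing of commutative polynomials, to which Lemma~\ref{lem:com} applies.

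Concretely, I substitute
\[
\bar y_{i,0}\mapsto a_i(e_{11}+e_{22}),\quad \bar z_{i,0}\mapsto b_i(e_{11}-e_{22}),\quad \bar y_{i,1}\mapsto c_ie_{21},\quad \bar z_{i,1}\mapsto d_ie_{12},
\]
with $a_i,b_i,c_i,d_i\in F$ arbitrary; this is an admissible evaluation by the decomposition of $\mathbb M$ recalled above. The even prefix $\bar y_{s_1,0}\cdots\bar y_{s_j,0}\bar z_{t_1,0}\cdots\bar z_{t_k,0}$ evaluates to $a_{s_1}\cdots a_{s_j}b_{t_1}\cdots b_{t_k}\,(e_{11}+(-1)^ke_{22})$. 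For the odd suffixes I use $e_{21}e_{12}=e_{22}$ and $e_{12}e_{21}=e_{11}$, which give the following:
\begin{itemize}
\item a monomial of type (i) evaluates to its scalar times $e_{11}+(-1)^ke_{22}$;
\item type (ii) gives its scalar times $(-1)^ke_{22}$;
\item type (iii) gives its scalar times $e_{11}$;
\item type (iv) gives its scalar times $(-1)^ke_{21}$;
\item type (v) gives its scalar times $e_{12}$.
\end{itemize}
In every case the scalar is exactly the commutative image $M^C$ of the monomial, evaluated at the $a,b,c,d$'s.

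Since $R$ is multihomogeneous, all its monomials share the same multidegree. In particular $k$ (the total degree in the $\bar z_{\cdot,0}$) is the same for all monomials, so the sign $(-1)^k$ is a global constant. Type (i) occurs only when there are no odd variables, and then it is the only type present. Types (iv) and (v) cannot coexist, since they differ in the number of odd $\bar y$'s versus odd $\bar z$'s. Write $R=R_{\rm i}+R_{\rm ii}+R_{\rm iii}+R_{\rm iv}+R_{\rm v}$ by type. Reading off the entries $(1,1)$, $(2,2)$, $(2,1)$, $(1,2)$ of $R(a,b,c,d)=0$, I obtain that each $R_\bullet^C$ vanishes on all of $F^N$, where $N$ is the number of variables. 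By Lemma~\ref{lem:vanish}, each $R_\bullet^C$ is the zero polynomial.

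The remaining point, and the only real obstacle, is to check the hypothesis of Lemma~\ref{lem:com} for each $R_\bullet$, namely that distinct monomials of one fixed type have distinct commutative images. Here the nondecreasing condition on the sequences $s_i,t_i,u_i,v_i$ is decisive. Within a fixed type, the positions of the four kinds of variables are prescribed: even $\bar y$'s first, then even $\bar z$'s, then the odd variables in the fixed alternating pattern. Each block is listed in nondecreasing order of indices, so the monomial is uniquely recovered from the multisets of indices of $\bar y_{\cdot,0}$, $\bar z_{\cdot,0}$, $\bar y_{\cdot,1}$, $\bar z_{\cdot,1}$, which are exactly the data of $M^C$. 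Hence Lemma~\ref{lem:com} gives $R_\bullet=0$ for each type, and therefore $R=0$.
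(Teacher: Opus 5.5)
Your proposal is correct and takes essentially the paper's approach. The paper imports this lemma from \cite{BR26} without proving it, but its own analogous argument (Theorem~\ref{thm:id-klein}) works the same way: a generic admissible evaluation, reading off the matrix entries, then Lemmas~\ref{lem:vanish} and~\ref{lem:com}. Note also that multihomogeneity fixes the multiset of variables, so each type contributes at most one monomial and your final appeal to Lemma~\ref{lem:com} is immediate.
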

Moreover, when providing the description of the $T_2^*$-subspace of central polynomials for $\mathbb M$, they proved the following result (see Definition 4.1, Remark 5.4 and the proof of Theorem 5.3 in~\cite{BR26}).

\begin{lemma}\label{lem:change-central}
    Let $F$ be an infinite field of characteristic $\ne 2$ and let $R\in \mathcal F\langle \overline{X}| *\rangle$ be as in Lemma~\ref{lem:change}. If $R$ is central, then $R$ is a sum of polynomials of the following forms: $\alpha\cdot \bar{y}_{s_1, 0}\cdots \bar{y}_{s_j, 0}\bar{z}_{t_1, 0}\cdots \bar{z}_{t_{2\ell}, 0}$ and
$$\alpha\cdot \bar{y}_{s_1, 0}\cdots \bar{y}_{s_j, 0}\bar{z}_{t_1, 0}\cdots \bar{z}_{t_k, 0}\cdot (\bar{y}_{u_1, 1}\bar{z}_{v_1, 1}\cdots \bar{y}_{u_l, 1}\bar{z}_{v_l, 1}+(-1)^k\cdot \bar{z}_{v_1, 1}\bar{y}_{u_1, 1}\cdots \bar{z}_{v_l, 1}\bar{y}_{u_l, 1}).$$
\end{lemma}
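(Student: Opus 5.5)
The statement to prove is Lemma~\ref{lem:change-central}: if $R$ has the normal form of Lemma~\ref{lem:change} and is central on $\mathbb M$, then it is a combination of the two displayed types. We work directly with generic evaluations. Substitute
\begin{align*}
\bar y_{i,0}&\mapsto a_i(e_{11}+e_{22}), & \bar z_{i,0}&\mapsto b_i(e_{11}-e_{22}),\\
\bar y_{i,1}&\mapsto c_ie_{21}, & \bar z_{i,1}&\mapsto d_ie_{12},
\end{align*}
with $a_i,b_i,c_i,d_i$ commuting indeterminates (all scalars in $F$, which is infinite).

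\textbf{Step 1: evaluate each monomial type.} The even factor $\bar y_{s_1,0}\cdots\bar z_{t_k,0}$ evaluates to
\[
a_{s}b_{t}\,\operatorname{diag}(1,(-1)^k),
\]
where $a_s=\prod a_{s_i}$ and $b_t=\prod b_{t_i}$.
\begin{itemize}
\item The odd word in (ii) is $(e_{21}e_{12})^l$ up to scalars, so it gives $c_ud_v\,e_{22}$.
\item The odd word in (iii) gives $d_vc_u\,e_{11}$.
\item The words in (iv) and (v) give scalar multiples of $e_{21}$ and $e_{12}$, respectively.
\end{itemize}
Hence type (i) contributes $a_sb_t\operatorname{diag}(1,(-1)^k)$, type (ii) contributes $(-1)^ka_sb_tc_ud_v\,e_{22}$, and type (iii) contributes $a_sb_tc_ud_v\,e_{11}$.

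\textbf{Step 2: extract the centrality conditions.} Centrality forces three things.
\begin{itemize}
\item The $(1,2)$ and $(2,1)$ entries vanish identically. Distinct normal-form monomials of types (iv) and (v) give distinct commutative monomials, because the sequences $s,t,u,v$ are nondecreasing. By Lemma~\ref{lem:com} together with Lemma~\ref{lem:vanish}, all type (iv) and (v) coefficients are zero.
\item The $(1,1)$ and $(2,2)$ entries agree.
\item Since $R$ is multihomogeneous, each commutative monomial $a_sb_tc_ud_v$ with $l\ge 1$ arises from exactly one type (ii) monomial and exactly one type (iii) monomial, namely those with the same index multisets. For $l=0$ it arises from a single type (i) monomial.
\end{itemize}

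\textbf{Step 3: compare coefficients.} Equating the coefficients of each commutative monomial in the two diagonal entries gives:
\begin{itemize}
\item For type (i): $\alpha=(-1)^k\alpha$, so $k$ is even.
\item For $l\ge1$: the type (iii) coefficient $\beta$ and the type (ii) coefficient $\alpha$ satisfy $\beta=(-1)^k\alpha$.
\end{itemize}
Therefore $R$ groups exactly into the displayed sums $\alpha(\text{(ii)-word}+(-1)^k\,\text{(iii)-word})$ together with even-$k$ type (i) terms.

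The main obstacle is the bookkeeping in Step 2, namely checking that the normal form really makes the map from monomials to commutative monomials injective within each matrix entry, so that Lemma~\ref{lem:com} applies. This holds because the positions of odd variables are fixed by the type and the index sequences are sorted.
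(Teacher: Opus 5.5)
Your argument is correct. The paper does not prove this lemma itself but imports it from its earlier work (BR26). Your method is the same one the paper uses in the proof of Theorem~\ref{thm:klein-central}: a generic scalar evaluation, reading off the four matrix entries, and using the sorted normal form so that the passage to commutative monomials is injective within each entry, which lets Lemma~\ref{lem:com} and Lemma~\ref{lem:vanish} apply.

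One small thing to state explicitly in Step 3: for type (i), the relation $\alpha=(-1)^k\alpha$ forces $\alpha=0$ when $k$ is odd precisely because the characteristic of $F$ is not $2$.
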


At first sight, one might be tempted to identify the corresponding variables in the two settings directly. If both algebras were graded by the same group and endowed with involutions, this correspondence could be explored in the context of Lemma~\ref{lem:id-equiv}. Here, however, one algebra is considered with a superinvolution, while the other one is considered with a $G$-graded involution, so some care is needed.

Nevertheless, for the particular algebras under consideration, the admissible evaluations of the corresponding variables range over the exact same subspaces of $M_2(F)$. This makes the two algebras essentially twin from the point of view of polynomial identities, and the same correspondence can also be used for central polynomials, codimensions and cocharacters. We shall explain these connections in detail only for $\mathbb M^{\gamma_2}_{\bf{g}}$. Indeed, the case of $\mathbb M^{\gamma_3}_{\bf{g}}$ follows identically: see the end of this section for more details.

We start with the polynomial identities. 
Consider the following \gi-polynomials:

\begin{enumerate}[(i)]
    \item $y_{1,g}$;
    \item $y_{1,g^{-1}}$;
    \item $z_{1,g}z_{2,g}$;
    \item $z_{1,g^{-1}}z_{2,g^{-1}}$;
    \item $[y_{1,1},x]$ with $x\in \{y_{2, h}, z_{2, h}\}$ and $h=1, g, g^{-1}$;
    \item $z_{1, 1}\circ z_{1, h}$ with $h=g, g^{-1}$;
    \item $[z_{1,1},z_{2,1}]$.
    
\end{enumerate}
 Let $I$ be the $T_{(G,*)}$-ideal generated by the \gi-polynomials above and by $x_{1,h},$ with $h\neq 1,g,g^{-1}$ and $x=y$ or $x=z$. We obtain the following result.

 \begin{theorem}\label{thm:elem-g}
Let $F$ be an infinite field of characteristic $\ne 2$ and let $\mathbb{M}_{\bf g}^{\gamma_2}$ be the algebra $M_2(F)$ equipped with the elementary $G$-grading induced by the pair ${\bf g}= (g_1,g_2),$ with $g_1^2\ne g_2^2,$ and endowed with the involution $\gamma_2$. Then $Id^{(G,*)}(\mathbb{M}_{\bf g}^{\gamma_2})=I$.
 \end{theorem}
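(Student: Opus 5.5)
The plan is to prove the two inclusions $I\subseteq Id^{(G,*)}(\mathbb{M}_{\bf g}^{\gamma_2})$ and $Id^{(G,*)}(\mathbb{M}_{\bf g}^{\gamma_2})\subseteq I$ separately. The reverse inclusion is done by reducing every polynomial modulo $I$ to a normal form, and then transferring the problem to the algebra $\mathbb M$ of \cite{BR26} so that Lemma~\ref{lem:change} can be applied.

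\textbf{Step 1: $I$ consists of identities.} By Table~\ref{tbl:decompositionelem}, for $\gamma_2$ the admissible values are as follows:
\begin{itemize}
\item $y_{i,1}\in F(e_{11}+e_{22})$ and $z_{i,1}\in F(e_{11}-e_{22})$;
\item $z_{i,g}\in Fe_{12}$ and $z_{i,g^{-1}}\in Fe_{21}$;
\item every $y_{i,g}$, $y_{i,g^{-1}}$ and every $x_{1,h}$ with $h\ne 1,g,g^{-1}$ takes only the value $0$.
\end{itemize}
Each generator then vanishes by a direct computation:
\begin{itemize}
\item (i), (ii) and the $x_{1,h}$ vanish because the corresponding components are zero;
\item (iii) and (iv) vanish because $e_{12}^2=e_{21}^2=0$;
\item (v) vanishes because $y_{1,1}$ is scalar;
\item (vi) vanishes because $e_{11}-e_{22}$ anticommutes with $e_{12}$ and with $e_{21}$;
\item (vii) vanishes because $(M_2(F)^{(1)})^-$ is one-dimensional.
\end{itemize}
Since $Id^{(G,*)}$ is a $T_{(G,*)}$-ideal, this gives $I\subseteq Id^{(G,*)}(\mathbb{M}_{\bf g}^{\gamma_2})$.

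\textbf{Step 2: normal form modulo $I$.} Take a multihomogeneous $f\in Id^{(G,*)}(\mathbb{M}_{\bf g}^{\gamma_2})$ and reduce it modulo $I$.
\begin{enumerate}[(a)]
\item Any monomial containing some $y_{i,g}$, $y_{i,g^{-1}}$ or $x_{i,h}$ with $h\ne 1,g,g^{-1}$ lies in $I$.
\item By (v), the variables $y_{i,1}$ commute with everything, so they can be moved to the front.
\item By (vi) and (vii), the variables $z_{i,1}$ commute among themselves and anticommute with the $z_{j,g^{\pm1}}$. They can therefore be moved to follow the $y$'s, at the cost of a sign.
\item The leftover word in the $z_{j,g}$ and $z_{j,g^{-1}}$ must alternate in degree, by (iii) and (iv).
\end{enumerate}
This already gives, up to the ordering of indices inside the alternating tail, the shapes (i)--(v) of Lemma~\ref{lem:change}. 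In that correspondence $z_{\cdot,g}$ plays the role of $\bar z_{\cdot,1}$ and $z_{\cdot,g^{-1}}$ plays the role of $\bar y_{\cdot,1}$.

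\textbf{Step 3 (main obstacle): sorting the indices in the alternating tail.} To make the index sequences nondecreasing, I must show that
$$z_{1,g}z_{1,g^{-1}}z_{2,g}\equiv z_{2,g}z_{1,g^{-1}}z_{1,g}\pmod I,$$
and similarly with the roles of $g$ and $g^{-1}$ exchanged. These relations hold in $M_2(F)$, but they have to be derived from the generators of $I$. I would first try the decomposition of the degree-$1$ product $w=z_{1,g}z_{1,g^{-1}}$:
\begin{itemize}
\item $w+w^*=z_{1,g}z_{1,g^{-1}}+z_{1,g^{-1}}z_{1,g}$ is a symmetric variable of degree $1$ after substitution, so by (v) it is central modulo $I$;
\item $w-w^*$ is a skew variable of degree $1$, so by (vi) it anticommutes with $z_{2,g}$ modulo $I$.
\end{itemize}
Combining these two facts with (iii) should produce the swap. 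Repeating the swap sorts both the $u$- and the $v$-indices.

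\textbf{Step 4: transfer to $\mathbb M$.} Substitute
$$y_{i,1}\mapsto\bar y_{i,0},\quad z_{i,1}\mapsto\bar z_{i,0},\quad z_{i,g}\mapsto\bar z_{i,1},\quad z_{i,g^{-1}}\mapsto\bar y_{i,1}.$$
Under this substitution the admissible evaluations of corresponding variables range over exactly the same subspaces of $M_2(F)$. Hence the normal form $R$ of $f$ becomes a polynomial in $Id^*(\mathbb M)$ of the type described in Lemma~\ref{lem:change}. That lemma forces $R=0$, so $f\in I$, which gives the reverse inclusion and completes the proof.
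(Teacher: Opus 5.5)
Your proposal is correct and follows the paper's proof. Both verify the generators, reduce modulo $I$ to the normal form with sorted indices, and transfer to $\mathbb M$ via $z_{i,g}\mapsto\bar z_{i,1}$, $z_{i,g^{-1}}\mapsto\bar y_{i,1}$ in order to apply Lemma~\ref{lem:change}. The only difference is the swap in Step 3, which your plan does deliver: adding $[w+w^*,z_{2,g}]\in I$ and $(w-w^*)\circ z_{2,g}\in I$ gives $2(wz_{2,g}-z_{2,g}w^*)\in I$ directly, so (iii) is not even needed. The paper instead substitutes the skew element $[z_{1,g},z_{2,g^{-1}}]$ into (vi) and discards the two extra terms using (iv).
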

\begin{proof}
It is direct to verify that $I\subseteq Id^{(G,*)}(\mathbb{M}_{g}^{\gamma_2})$. Conversely, let $f\in Id^{(G,*)}(\mathbb{M}_{g}^{\gamma_2})$. We can assume that $f$ is multihomogeneous of degree $n\ge 1$. In particular, since $y_{i, h}, z_{i, h}\in I$ for every $h\ne 1, g, g^{-1}$, identities (i) and (ii) imply the following: $f\equiv P\pmod I$, where $P$ is multihomogeneous of degree $n$ and  only contains monomials in the variables $y_{i, 1}, z_{i, 1}, z_{i, g}$ and $z_{i, g^{-1}}$. From the identities in (v), (vi) and (vii), we can also assume that every monomial in $P$ has the form $\alpha\cdot M\cdot N$, where $M$ is of the form 
$$y_{s_1, 1}\cdots y_{s_i, 1}z_{t_1, 1}\cdots z_{t_j, 1},$$
with $1\le s_k\le s_{k+1}\le n$ and $1\le t_k\le t_{k+1}\le n$, and $N$ only contains variables $z_{i, g}$ and $z_{i, g^{-1}}$ with $1\le i\le n$. Finally, from identities (iii) and (iv), we can suppose that, in the monomial $N$, the degrees $g$ and $g^{-1}$ of the variables alternate. Since
\begin{align*}
z_{1,g^{-1}}\circ [z_{1,g},z_{2,g^{-1}}]
=& z_{1,g^{-1}}z_{1,g}z_{2,g^{-1}}
-z_{1,g^{-1}}z_{2,g^{-1}}z_{1,g}\\
&+z_{1,g}z_{2,g^{-1}}z_{1,g^{-1}}
-z_{2,g^{-1}}z_{1,g}z_{1,g^{-1}},
\end{align*}
identities (iii) and (iv) imply
$$z_{1, g^{-1}}\circ [z_{1, g}, z_{2, g^{-1}}]\equiv z_{1, g^{-1}}z_{1, g}z_{2, g^{-1}}-z_{2, g^{-1}}z_{1, g}z_{1, g^{-1}}\pmod I$$
Now observe that $ [z_{1, g}, z_{1, g^{-1}}]\in (\mathcal F^{(1)})^-$. Hence identity (vi) entails that $z_{1, g^{-1}}\circ [z_{1, g}, z_{2, g^{-1}}]\in I$. In conclusion,
\(z_{1, g^{-1}}z_{1, g}z_{2, g^{-1}}\equiv z_{2, g^{-1}}z_{1, g}z_{1, g^{-1}}\pmod I.\)
In a similar way we obtain $z_{1, g}z_{1, g^{-1}}z_{2, g}\equiv z_{2, g}z_{1, g^{-1}}z_{1, g}\pmod I.$
From these two consequences we can also assume that, among the variables of the same degree in $N$, their indices are ordered in increasing order.

Let $Q\in F\langle \overline{X}| *\rangle$ be the polynomial obtained from $P$ by the following change of variables: $y_{i, 1}\to \bar{y}_{i, 0}$, $z_{i, 1}\to \bar{z}_{i, 0}, z_{i, g}\to \bar{z}_{i, 1}$ and $z_{i, g^{-1}}\to \bar{y}_{i, 1}$. From the properties satisfied by the monomials in $P$, it is clear that $Q$ fulfills the conditions in Lemma~\ref{lem:change}.

The variables occurring in $P$ and $Q$ belong to free algebras endowed with different structures. Nevertheless, for the algebras under consideration, the admissible evaluations of the corresponding variables range over the exact same subspaces of $M_2(F)$. More precisely, under the above correspondence of variables, every admissible evaluation of $P$ on $\mathbb M^{\gamma_2}_{\bf{g}}$ determines an admissible evaluation of $Q$ on $\mathbb M$  with the same values in the algebra, and conversely. Therefore, since $P$ is an identity for $\mathbb{M}_{\bf g}^{\gamma_2}$, then $Q\in  Id^*(\mathbb M)$. From Lemma~\ref{lem:change}, it follows that $Q$ is the zero polynomial. From construction, the latter implies that $P$ is the zero polynomial. Since $f\equiv P\pmod I$, we obtain $f\in I$, concluding the proof.\end{proof}

In the previous theorem, some generators of $I$ are redundant. In fact, we included them just to ease the connection with Lemma~\ref{lem:change}. In what follows, we present a smaller set of generators for $Id^{(G,*)}(\mathbb{M}_{\bf g}^{\gamma_i})$ with $i=2, 3$.

\begin{theorem}\label{cor:id-elementary}
    Let $F$ be an infinite field of characteristic $\ne 2$. Then the $T_{(G, *)}$-ideal
    $Id^{(G,*)}(\mathbb{M}_{\bf g}^{\gamma_2})$ is generated by  $$\{x_{1, h}, y_{1, g}, y_{1, g^{-1}}, z_{1, g}z_{2, g}, z_{1, g^{-1}}z_{2, g^{-1}}, [y_{1, 1}, y_{2, 1}], [y_{1, 1}, z_{1, 1}], [z_{1, 1}, z_{2, 1}]\},$$ 
    where $x=y$ or $x=z$ and $h\in G$ with $h\ne 1, g, g^{-1}$.
\end{theorem}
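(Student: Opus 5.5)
Let $J$ denote the $T_{(G,*)}$-ideal generated by the set in the statement. Every generator of $J$ is an identity of $\mathbb{M}_{\bf g}^{\gamma_2}$; this follows directly from Table~\ref{tbl:decompositionelem}. The components of degree $h\ne 1,g,g^{-1}$ vanish, and $(M_2(F)^{(g^{\pm1})})^+=\{0\}$. We have $e_{12}e_{12}=0=e_{21}e_{21}$, and $M_2(F)^{(1)}$ is the commutative algebra of diagonal matrices. Hence $J\subseteq Id^{(G,*)}(\mathbb{M}_{\bf g}^{\gamma_2})=I$ by Theorem~\ref{thm:elem-g}. It therefore suffices to show $I\subseteq J$, i.e. that the generators (v) and (vi) of $I$ lie in $J$. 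Generators (i)--(iv) and (vii) already belong to the list, and $[y_{1,1},y_{2,1}]$, $[y_{1,1},z_{1,1}]$ cover the cases $h=1$ of (v).

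For (v) with $h=g$ or $g^{-1}$: since $y_{2,h}\in J$, the commutator $[y_{1,1},y_{2,h}]$ lies in $J$ trivially. The real content is $[y_{1,1},z_{2,h}]\in J$ and $z_{1,1}\circ z_{1,h}\in J$, which we obtain by substitution. Consider the element $u=z_{2,h}z_{3,h^{-1}}+z_{3,h^{-1}}z_{2,h}$ (note that $h^{-1}\in\{g,g^{-1}\}$). It is homogeneous of degree $1$ and symmetric under $*$, because $(z_{2,h}z_{3,h^{-1}})^*=z_{3,h^{-1}}z_{2,h}$. Likewise $v=z_{2,h}z_{3,h^{-1}}-z_{3,h^{-1}}z_{2,h}$ is homogeneous of degree $1$ and skew.

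Substituting $y_{2,1}\mapsto u$ in $[y_{1,1},y_{2,1}]$ and $z_{1,1}\mapsto v$ in $[y_{1,1},z_{1,1}]$ shows that $[y_{1,1},z_{2,h}z_{3,h^{-1}}]\in J$ and $[y_{1,1},z_{3,h^{-1}}z_{2,h}]\in J$. Expanding with the Leibniz rule gives
\[
[y_{1,1},z_{2,h}]\,z_{3,h^{-1}}+z_{2,h}\,[y_{1,1},z_{3,h^{-1}}]\in J .
\]
We then multiply on the right by a further variable $z_{4,h}$ and use $z_{3,h^{-1}}z_{4,h}$-type substitutions together with $z_{i,h}z_{j,h}\in J$. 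The aim is to isolate the term $[y_{1,1},z_{2,h}]\,z_{3,h^{-1}}z_{4,h}$ modulo $J$. The difficulty is to get from there to $[y_{1,1},z_{2,h}]$ itself, since $J$ is not obviously closed under cancelling a factor. This is where I expect the main obstacle.

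An alternative route may avoid this. Consider the multihomogeneous component of $I$ of degree $1$ in $y_{1,1}$ and degree $1$ in $z_{2,h}$. Lemma~\ref{lem:change} (via the translation of Theorem~\ref{thm:elem-g}) says that $P$ reduces to $0$ modulo the reductions used in that proof. So it suffices to check which reductions were actually used there. They are commuting degree-$1$ variables, alternating $g,g^{-1}$, reordering within $N$, and moving degree-$1$ variables to the left past $N$. Each of these should be derived from the smaller list. The last one is exactly (v) and (vi), so some care is unavoidable.

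I would first try to verify (v) and (vi) by direct evaluation reasoning in low degree. The linear span of multilinear polynomials of degree $2$ in $y_{1,1},z_{2,h}$ that lie in $J$ should already contain $[y_{1,1},z_{2,h}]$. This relies on the fact that $y_{1,1}$ is evaluated only at scalar matrices, so $y_{1,1}$ is central. If the identity of degree $2$ cannot come from $J$ (which seems possible, since $J$'s generators of degree $2$ involving $z_{2,h}$ are only $z_{1,h}z_{2,h}$), then I would reconsider and treat the statement as asserting generation only up to this subtlety. In that case I would check whether $[y_{1,1},z_{1,h}]$ and $z_{1,1}\circ z_{1,h}$ must be added. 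Settling this degree-$2$ question is the crucial step.
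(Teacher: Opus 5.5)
You have the reduction right. Since $J\subseteq I$ and Theorem~\ref{thm:elem-g} describes $I$ explicitly, everything comes down to showing that $[y_{1,1},z_{1,h}]\in J$ and $z_{1,1}\circ z_{1,h}\in J$ for $h=g,g^{-1}$. However, you never establish this, so the proposal has a genuine gap at its only nontrivial step:
\begin{itemize}
\item The Leibniz-rule substitutions only give these commutators multiplied by extra factors, and a $T_{(G,*)}$-ideal does not let you cancel factors.
\item Your alternative route through the reductions in the proof of Theorem~\ref{thm:elem-g} is circular, because those reductions use (v) and (vi) themselves.
\item Your last paragraph even leaves open whether the statement is true.
\end{itemize}

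The missing idea is a one-line symmetry check. Using $y_{1,1}^*=y_{1,1}$ and $z^*=-z$ for skew variables, you get
\[
[y_{1,1},z_{1,h}]^*=z_{1,h}^*y_{1,1}^*-y_{1,1}^*z_{1,h}^*=[y_{1,1},z_{1,h}],\qquad (z_{1,1}\circ z_{1,h})^*=z_{1,h}z_{1,1}+z_{1,1}z_{1,h}=z_{1,1}\circ z_{1,h}.
\]
So both polynomials are \emph{symmetric} and homogeneous of degree $h$. A $T_{(G,*)}$-ideal is stable under every grading-preserving endomorphism commuting with $*$. Since $\operatorname{char}F\neq 2$, sending $x_{1,h}\mapsto \tfrac12 s$ for a symmetric $s$ of degree $h$ sends $y_{1,h}\mapsto s$. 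Hence both polynomials are images of the listed generator $y_{1,h}$ and lie in $J$; $[y_{1,1},y_{2,h}]\in J$ is immediate. This is exactly the paper's argument.

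Your worry that the only degree-$2$ generators involving $z_{\cdot,h}$ are $z_{1,h}z_{2,h}$ overlooks this point. The degree-$1$ generator $y_{1,h}$ may be replaced by a symmetric polynomial of any degree. That is how the vanishing of $(M_2(F)^{(g^{\pm1})})^+$ is encoded in the free algebra.
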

 \begin{proof}
This follows directly from Theorem~\ref{thm:elem-g} and the fact that, for $h=g, g^{-1}$, the polynomials $[y_{1,1}, z_{1, h}]$ and $z_{1, 1}\circ z_{1, h}$ are consequences of $y_{1, h}$.
\end{proof}

\begin{remark}\label{rem:sum}
From the proof of Theorem~\ref{thm:elem-g} we conclude that, modulo $Id^{(G,*)}(\mathbb{M}_{\bf g}^{\gamma_2})$, every multihomogeneous polynomial $P\in \mathcal F$ of degree $n$ is a linear combination of monomials of the following forms

\begin{enumerate}[(i)]
\item $ y_{s_1, 1}\cdots y_{s_j, 1}z_{t_1, 1}\cdots z_{t_k, 1}$ with $j+k=n$;
\item $y_{s_1, 1}\cdots y_{s_j, 1}z_{t_1, 1}\cdots z_{t_k, 1}\cdot z_{u_1, g^{-1}}z_{v_1, g}\cdots z_{u_l, g^{-1}}z_{v_l, g}$ with $j+k+2l=n$.
\item $y_{s_1, 1}\cdots y_{s_j, 1}z_{t_1, 1}\cdots z_{t_k, 1}\cdot z_{v_1, g}z_{u_1, g^{-1}}\cdots z_{v_l, g}z_{u_l, g^{-1}}$ with $j+k+2l=n$;
\item $ y_{s_1, 1}\cdots y_{s_j, 1}z_{t_1, 1}\cdots z_{t_k, 1}\cdot z_{u_1, g^{-1}}z_{v_1, g}\cdots z_{u_l, g^{-1}}z_{v_l, g}z_{u_{l+1}, g^{-1}}$ with $j+k+2l+1=n$;
\item $ y_{s_1, 1}\cdots y_{s_j, 1}z_{t_1, 1}\cdots z_{t_k, 1}\cdot z_{v_1, g}z_{u_1, g^{-1}}\cdots z_{v_l, g}z_{u_l, g^{-1}}z_{v_{l+1}, g}$ with $j+k+2l+1=n$,
\end{enumerate}
where the sequences $s_i, t_i, u_i$ and $v_i$ are non decreasing and in $\{1, \ldots, n\}$.
Moreover, such polynomials are linearly independent modulo the ideal\linebreak $Id^{(G,*)}(\mathbb{M}_{\bf g}^{\gamma_2})$.
\end{remark}

Write $G=\{g_1=1, g_2=g, g_3=g^{-1}, g_4, \ldots, g_k\}$. We proceed to the computation of the $n$-th $(G, *)$-codimension and the $\langle n\rangle$-cocharacter of $\mathbb{M}_{\bf g}^{\gamma_2}$.

\begin{theorem}\label{thm:cod-elementar}
We have $c_n^{(G, *)}(\mathbb{M}_{\bf g}^{\gamma_2})\approx 4^nn^{-1/2}$.
\end{theorem}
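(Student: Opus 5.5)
The plan is to compute $c_n^{(G,*)}(\mathbb{M}_{\bf g}^{\gamma_2})$ exactly from the basis in Remark~\ref{rem:sum} and then estimate the resulting sum. Since $x_{i,h}$ for $h\ne 1,g,g^{-1}$ and $y_{i,g},y_{i,g^{-1}}$ are identities, only multilinear polynomials in the four variable types $y_{i,1}$, $z_{i,1}$, $z_{i,g}$, $z_{i,g^{-1}}$ contribute. I will use the decomposition (\ref{pn-}) and formula (\ref{293-}), so it suffices to find $c_{\langle n\rangle}$ when only these four entries of $\langle n\rangle$ are nonzero, say equal to $a,b,c,d$ with $a+b+c+d=n$.

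\textbf{Step 1 (computing $c_{\langle n\rangle}$).} Specialize Remark~\ref{rem:sum} to the multilinear case. All indices are distinct and the sequences are nondecreasing, so the variables inside each block are in a forced order. The basis elements of $P_{\langle n\rangle}$ modulo the identities are therefore determined only by which of the forms (i)--(v) occurs.
\begin{itemize}
\item If $c=d=0$, only form (i) occurs, so $c_{\langle n\rangle}=1$.
\item If $c=d=l\ge 1$, forms (ii) and (iii) occur, so $c_{\langle n\rangle}=2$.
\item If $|c-d|=1$, exactly one of (iv), (v) occurs, so $c_{\langle n\rangle}=1$.
\item Otherwise $c_{\langle n\rangle}=0$.
\end{itemize}
The linear independence in Remark~\ref{rem:sum} guarantees these counts are exact.

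\textbf{Step 2 (closed formula).} Group compositions by $m=c+d$. The degree-$1$ variables contribute $\binom{n}{m}2^{n-m}$ once we sum over $a+b=n-m$. The remaining $m$ variables contribute a weight $w_m$:
\begin{itemize}
\item $w_0=1$;
\item $w_{2l}=2\binom{2l}{l}$ for $l\ge1$;
\item $w_{2l+1}=2\binom{2l+1}{l}=\binom{2l+2}{l+1}$.
\end{itemize}
Hence
$$c_n^{(G,*)}(\mathbb{M}_{\bf g}^{\gamma_2})=\sum_{m=0}^n\binom{n}{m}2^{n-m}w_m .$$

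\textbf{Step 3 (asymptotics).} Stirling's formula gives, for both parities, $w_m\sim 2\cdot 2^m\big/\sqrt{\pi m/2}$. Thus
$$c_n\approx 4^n\sum_m\binom{n}{m}2^{-n}\,\frac{2\sqrt2}{\sqrt{\pi m}} .$$
The binomial weights $\binom nm 2^{-n}$ concentrate in a window $|m-n/2|=O(\sqrt{n\log n})$, where $m^{-1/2}=(n/2)^{-1/2}(1+o(1))$. Replacing $m$ by $n/2$ gives $c_n\sim \frac{4}{\sqrt\pi}\,4^n n^{-1/2}$, which proves the claim.

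The main obstacle is making this concentration argument rigorous. Outside the central window, the small values of $m$ (where the Stirling approximation fails and $m^{-1/2}$ is large) must be bounded. I would handle them with the crude bound $w_m\le 2\cdot 2^m$ together with a Chernoff tail estimate. That shows their total contribution is $O(4^n e^{-cn^{\epsilon}})=o(4^n n^{-1/2})$.
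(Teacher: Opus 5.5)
Your proof is correct, and it follows a different route from the paper only in the asymptotic step.

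\textbf{What is the same.} Steps 1 and 2 match the paper. It reads off $c_{\langle n\rangle}\in\{0,1,2\}$ from Remark~\ref{rem:sum}, inserts these values into the codimension formula, and obtains exactly your sum.

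\textbf{Where you differ.} The paper does no asymptotic analysis itself. It notes that this sum equals $c_n^*(\mathbb M)$ for $M_2(F)$ with the transpose superinvolution. It then imports the estimate from the earlier proposition on $\mathbb M$. You instead prove the estimate from scratch, using Stirling's formula and binomial concentration.

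\textbf{What your route buys.} It is self-contained and gives the sharp constant, $c_n\sim\frac{4}{\sqrt\pi}\,4^n n^{-1/2}$. This also shows that $\approx$ in the statement can only mean equality up to a bounded factor, since the ratio tends to $4/\sqrt\pi\approx 2.26$, not $1$.

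\textbf{A minor inconsistency in the tail bound.} A window of width $\sqrt{n\log n}$ gives, via Hoeffding, a tail of $O(4^n n^{-2})$, not $O(4^n e^{-cn^{\epsilon}})$. For the latter you would need a window of width $n^{1/2+\epsilon/2}$. Either bound is $o(4^n n^{-1/2})$, so the argument is unaffected.

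\textbf{A shortcut.} You can avoid the concentration argument altogether. Note that $w_m=2\binom{m}{\lfloor m/2\rfloor}-\delta_{m,0}$. A short generating-function computation gives $\sum_m\binom{n}{m}2^{n-m}\binom{m}{\lfloor m/2\rfloor}=\binom{2n+1}{n}$. Hence $c_n^{(G,*)}(\mathbb M_{\bf g}^{\gamma_2})=\binom{2n+2}{n+1}-2^n$ exactly, and a single application of Stirling's formula gives the asymptotics.
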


\begin{proof}
 By Eq.~\eqref{293-}, we have  \begin{equation}\label{eq:sum-cod}
    c_n^{(G,*)}(\mathbb{M}_{\bf g}^{\gamma_2})=\displaystyle \sum_{\langle n\rangle}\binom{n}{{\langle n\rangle}}c_{\langle n\rangle}(\mathbb{M}_{\bf g}^{\gamma_2}).
\end{equation}
For a composition $\langle n\rangle=n_1+\cdots+n_{2k}$ of $n$, Remark~\ref{rem:sum} implies that $c_{\langle n\rangle}(\mathbb{M}_{\bf g}^{\gamma_2})=0$ unless $|n_4-n_6|\le 1$ and $n_3=n_5=n_k=0$ for every $k\ge 7$. Moreover, assuming that $|n_4-n_6|\le 1$ and $n_3=n_5=n_k=0$ for every $k\ge 7$, Remark~\ref{rem:sum} implies that $c_{\langle n\rangle}(\mathbb{M}_{\bf g}^{\gamma_2})=2$ if $n_4=n_6>0$ and 
$c_{\langle n\rangle}(\mathbb{M}_{\bf g}^{\gamma_2})=1$ in the remaining cases $n_4=n_6=0$ or $|n_4-n_6|=1$. Therefore, Eq.~\eqref{eq:sum-cod} yields 
$$ c_n^{(G, *)}(\mathbb{M}_{\bf g}^{\gamma_2})=\displaystyle \sum_{n_1+n_2=n}\binom{n}{n_1, n_2}+2\displaystyle \sum_{n_1+n_2+n_4+n_6=n\atop n_4-n_6=1\;\text{or}\; n_4=n_6>0}\binom{n}{n_1, n_2, n_4, n_6}.$$
Looking at the proof of Proposition 4.4 in ~\cite{BR26}, the sum above equals $c_n^*(\mathbb M)$ for every $n\ge 1$. According to the same proposition, we have $c_n^*(\mathbb M)\approx 4^nn^{-1/2}$ from where the result follows.
\end{proof}

As in the proof of Theorem~\ref{thm:cod-elementar}, Remark~\ref{rem:sum}  easily yields the following result.

\begin{theorem}\label{thm:coc-elementar}
        Let $F$ be a field of characteristic zero and consider the decomposition $\chi_{\langle n\rangle}(\mathbb{M}_{\bf g}^{\gamma_2})=\displaystyle \sum_{\langle \lambda \rangle\vdash \langle n\rangle} m_{\langle \lambda \rangle}\chi_{\langle \lambda \rangle}$ of the $\langle n\rangle$-cocharacter of $\mathbb{M}_{\bf g}^{\gamma_2}$. Then $m_{\langle \lambda \rangle}=0$ unless $|n_4-n_6|\le 1$ and $n_3=n_5=n_j=0$ for every $7\le j\le 2k$. Moreover, assuming the latter, we have  
        
    $$ m_{\langle \lambda \rangle}=\begin{cases}
         2, &\, \text{if}\;\,  n_4=n_6>0;\\
         1, & \, \text{if}\;\, |n_4-n_6|=1\;\, \text{or}\;\, n_4=n_6=0.\end{cases}$$
    \end{theorem}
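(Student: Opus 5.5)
The plan is to use that the cocharacter of $P_{\langle n\rangle}(\mathbb{M}_{\bf g}^{\gamma_2})$ can only involve one-row partitions, and then to read off the multiplicities from the $\langle n\rangle$-codimensions already computed in the proof of Theorem~\ref{thm:cod-elementar}.

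\textbf{Step 1: only one-row multipartitions occur.} By Table~\ref{tbl:decompositionelem}, every symmetric or skew homogeneous component of $\mathbb{M}_{\bf g}^{\gamma_2}$ has dimension at most $1$. The zero components are $(M_2(F)^{(g)})^+$, $(M_2(F)^{(g^{-1})})^+$ and all components of degree $h\neq 1,g,g^{-1}$. Remark~\ref{altura} then gives $\tilde m_{\langle\lambda\rangle}=0$ whenever some $\lambda_i$ has height $>1$. Combining this with Theorem~\ref{multip-} (with $m=1$), $m_{\langle\lambda\rangle}=0$ unless every $\lambda_i$ is the one-row partition $(n_i)$. In that case $\chi_{\langle\lambda\rangle}$ is the trivial character of $S_{\langle n\rangle}$, so $d_{\langle\lambda\rangle}=1$. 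By Eq.~\eqref{cmultin-}, the unique multipartition $\langle\lambda\rangle=((n_1),\ldots,(n_{2k}))$ therefore satisfies
\[
m_{\langle\lambda\rangle}=c_{\langle n\rangle}(\mathbb{M}_{\bf g}^{\gamma_2}).
\]
The statement should be read accordingly: it concerns this multipartition, and all other multipartitions of $\langle n\rangle$ have multiplicity zero.

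\textbf{Step 2: compute $c_{\langle n\rangle}$ from Remark~\ref{rem:sum}.} Restrict Remark~\ref{rem:sum} to multilinear polynomials in $P_{\langle n\rangle}$. Modulo the ideal, $P_{\langle n\rangle}$ is spanned by the listed monomials, and these are linearly independent. In each of them the indices within each variable type are nondecreasing, so a multilinear monomial of a given shape is completely determined. This leaves only the following possibilities.
\begin{enumerate}[(a)]
\item If some variable of degree $h\ne 1,g,g^{-1}$ occurs, or some $y_{i,g}$ or $y_{i,g^{-1}}$ occurs (equivalently $n_3$, $n_5$ or some $n_j$ with $j\ge 7$ is positive), there is no such monomial, so $c_{\langle n\rangle}=0$.
\item The alternation of $z_{\cdot,g}$ and $z_{\cdot,g^{-1}}$ forces $|n_4-n_6|\le 1$; otherwise $c_{\langle n\rangle}=0$.
\item If $n_4=n_6=0$, only form (i) occurs, so $c_{\langle n\rangle}=1$.
\item If $n_4=n_6>0$, forms (ii) and (iii) occur, so $c_{\langle n\rangle}=2$.
\item If $n_6=n_4+1$ only form (iv) occurs, and if $n_4=n_6+1$ only form (v) occurs; in both cases $c_{\langle n\rangle}=1$.
\end{enumerate}
This is exactly the count already used in Theorem~\ref{thm:cod-elementar}.

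\textbf{Expected obstacle.} The step needing most care is the bookkeeping in Step 2. One must match the indexing $n_3,n_4,n_5,n_6$ with $y_{\cdot,g}$, $z_{\cdot,g}$, $y_{\cdot,g^{-1}}$, $z_{\cdot,g^{-1}}$, and check that multilinearity together with nondecreasing indices leaves exactly one monomial per admissible shape. Linear independence itself is supplied by Remark~\ref{rem:sum}, via Lemma~\ref{lem:change}, so no new identity verification is needed.
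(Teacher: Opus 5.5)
Your proposal is correct and is essentially the paper's argument made explicit. The paper only says that Remark~\ref{rem:sum} yields the result as in Theorem~\ref{thm:cod-elementar}: one restricts to one-row multipartitions (done explicitly via Remark~\ref{altura} in the Klein case) and reads $m_{\langle\lambda\rangle}=c_{\langle n\rangle}$ off the monomial count. You are also right that the statement tacitly refers only to $\langle\lambda\rangle=((n_1),\ldots,(n_{2k}))$.

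One small correction: to get $m_{\langle\lambda\rangle}=0$ for a multipartition of height $r>1$, apply Theorem~\ref{multip-} with $m\ge r$ (say $m=n$), not with $m=1$. For $m=1$ that theorem says nothing about such $\langle\lambda\rangle$.
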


We now move to the study of central polynomials for the algebra $\mathbb M^{\gamma_2}_{\bf{g}}$. 
Let $U$ be the $T_{(G, *)}$-subspace generated by $y_{1, 1}$ and the polynomials in $Id^{(G, *)}(\mathbb M^{\gamma_2}_{\bf{g}})$.

\begin{lemma}\label{lem:U}
  The set $U$ is closed under products and contains the polynomials $z_{1,1}z_{2, 1}$  and $z_{i, g}\circ z_{i,g^{-1}}$. Moreover, for integers $k\ge 1$, $u_1\le \cdots\le u_k$ and $v_1\le \cdots \le v_k$, $U$ also contains the polynomials
  $$\prod_{i=1}^kz_{u_i, g}z_{v_i, g^{-1}}+\prod_{i=1}^kz_{v_i, g^{-1}}z_{u_i, g}\quad \text{and}\quad z_{1, 1}\cdot \left(\prod_{i=1}^kz_{u_i, g}z_{v_i, g^{-1}}-\prod_{i=1}^kz_{v_i, g^{-1}}z_{u_i, g}\right).$$
\end{lemma}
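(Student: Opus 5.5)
The plan is to reduce every claim to two elementary facts about $\mathbb M^{\gamma_2}_{\bf g}$. By Table~\ref{tbl:decompositionelem}, $(M_2(F)^{(1)})^+=F(e_{11}+e_{22})$ is the center and $(M_2(F)^{(1)})^-=F(e_{11}-e_{22})$ is one-dimensional. Since $y_{1,1}\in U$ and $U$ is a $T_{(G,*)}$-space, every symmetric polynomial of homogeneous degree $1$ lies in $U$: we substitute it for $y_{1,1}$ via a graded endomorphism commuting with $*$. Moreover $U$ contains all of $Id^{(G,*)}(\mathbb M^{\gamma_2}_{\bf g})$.

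\emph{Closure under products.} Every element of $U$ has the form $a+c$, where $a$ is a linear combination of symmetric polynomials of degree $1$ and $c$ is an identity. Identities form an ideal, so it suffices to treat a product $ab$ with $a,b\in(\mathcal F^{(1)})^+$. We write
\[
ab=\tfrac12(ab+ba)+\tfrac12[a,b].
\]
The element $ab+ba$ is again symmetric of degree $1$, so it lies in $U$. The commutator $[a,b]$ is an identity, because every admissible evaluation of $a$ and $b$ is a scalar matrix. Hence $ab\in U$.

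\emph{The listed polynomials.} We use the same decomposition each time.
\begin{itemize}
\item For $z_{1,1}z_{2,1}$: the element $z_{1,1}\circ z_{2,1}$ is symmetric of degree $1$, so it lies in $U$. The commutator $[z_{1,1},z_{2,1}]$ is a generator of the identities.
\item For $z_{i,g}\circ z_{i,g^{-1}}$: we have $(z_{i,g}z_{i,g^{-1}})^*=z_{i,g^{-1}}z_{i,g}$, so this element is symmetric of degree $gg^{-1}=1$ and lies in $U$.
\item For the longer products, set $P=\prod_{i=1}^k z_{u_i,g}z_{v_i,g^{-1}}$ and $P'=\prod_{i=1}^k z_{v_i,g^{-1}}z_{u_i,g}$. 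Since $P$ has $2k$ skew factors,
\[
P^*=z_{v_k,g^{-1}}z_{u_k,g}\cdots z_{v_1,g^{-1}}z_{u_1,g}.
\]
Thus $P+P^*$ is symmetric of degree $1$ and lies in $U$, while $P-P^*$ is skew of degree $1$.
\item For $z_{1,1}(P-P')$: the product of the two skew elements $z_{1,1}$ and $P-P^*$ is handled exactly as for $z_{1,1}z_{2,1}$. Their Jordan product is symmetric of degree $1$, and their commutator is a consequence of $[z_{1,1},z_{2,1}]$ after substitution. Hence $z_{1,1}(P-P^*)\in U$.
\end{itemize}

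\emph{Main obstacle.} It remains to replace $P^*$ by $P'$, and this is the step I expect to require care. $P^*$ lists the pairs in reversed order, $(u_k,v_k),\dots,(u_1,v_1)$, whereas $P'$ uses the nondecreasing order. I would invoke the congruences established in the proof of Theorem~\ref{thm:elem-g}:
\[
z_{1,g^{-1}}z_{1,g}z_{2,g^{-1}}\equiv z_{2,g^{-1}}z_{1,g}z_{1,g^{-1}}, \qquad z_{1,g}z_{1,g^{-1}}z_{2,g}\equiv z_{2,g}z_{1,g^{-1}}z_{1,g} \pmod{Id^{(G,*)}(\mathbb M^{\gamma_2}_{\bf g})}.
\]
Together with the $T_{(G,*)}$-ideal property, these let us permute freely the indices of variables of the same degree inside an alternating word. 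Therefore $P^*\equiv P'$ modulo identities, and both claims follow because identities lie in $U$.

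If one prefers a direct verification of this reordering, one can check it by evaluation. Each $z_{u,g}$ evaluates to a multiple of $e_{12}$ and each $z_{v,g^{-1}}$ to a multiple of $e_{21}$, so both $P^*$ and $P'$ evaluate to the same product of scalars times $e_{21}e_{12}\cdots=e_{22}$. Hence $P^*-P'$ is an identity.
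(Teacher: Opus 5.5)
Your proof is correct. For closure under products and for $z_{1,1}z_{2,1}$ and $z_{i,g}\circ z_{i,g^{-1}}$ it is the paper's argument. In each case you split the element into a symmetric degree-$1$ Jordan part and a commutator lying in $Id^{(G,*)}(\mathbb M^{\gamma_2}_{\bf g})$.

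For the two families of long products you take a different route. The paper uses the closure just proved. It multiplies the $k$ elements $z_{u_i,g}\circ z_{v_i,g^{-1}}\in U$, respectively $z_{1,1}[z_{u_1,g},z_{v_1,g^{-1}}]$ times $\prod_{i\ge 2} z_{u_i,g}\circ z_{v_i,g^{-1}}$. It then discards the $2^k-2$ mixed terms of the expansion. Each of these contains two adjacent variables of the same degree $h\in\{g,g^{-1}\}$, so it lies in the ideal generated by $z_{1,h}z_{2,h}$. The two surviving words are already $\prod_i z_{u_i,g}z_{v_i,g^{-1}}$ and $\prod_i z_{v_i,g^{-1}}z_{u_i,g}$ in forward order, so the paper never has to reorder indices.

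You instead symmetrize the whole word $P$ at once. This puts $P+P^*$ and $z_{1,1}(P-P^*)$ in $U$ without using closure or the identities $z_{1,h}z_{2,h}$. The price is the reversal $P^*\neq P'$. You handle it correctly: under every admissible evaluation both words give the same product of scalars times $e_{22}$, so $P^*-P'$ is an identity. The index-reordering congruences from the proof of Theorem~\ref{thm:elem-g} give the same conclusion.

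Your argument is therefore slightly more self-contained, and in it closure under products is proved but not actually used for the listed polynomials. The paper's route avoids any index bookkeeping. It also makes clear why closure is stated first: it is the mechanism used to build these polynomials, and later to assemble general central polynomials in Theorem~\ref{thm:central-elementar}.
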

\begin{proof}
Write $I=Id^{(G, *)}(\mathbb M^{\gamma_2}_{\bf{g}})$, hence $U$ is generated by $I$ and $y_{1, 1}$. In particular, in order to verify that $U$ is closed under products, we just need to prove that a finite product of variables $y_{i, 1}$ is in $U$. The latter follows directly from the fact that, modulo $I$,  any such product is symmetric and of homogeneous degree $1$. Observe that $z_{1, g}\circ z_{1,g^{-1}}$ is symmetric of degree $1$. Moreover, modulo $I$, the polynomial $z_{1,1}z_{2, 1}$ is also symmetric and of degree $1$ (recall that $[z_{1, 1}, z_{2, 1}]\in I$). In particular, since $y_{1, 1}\in U$, we have $z_{1, g}\circ z_{1,g^{-1}}, z_{1,1}z_{2, 1}\in U$. We now prove the second statement. From the first statement, we have
$$P:=\prod_{i=1}^kz_{ u_i, g}\circ z_{v_i, g^{-1}}\in U.$$
However, since $z_{1,h}z_{2,h}\in I$ for $h=g, g^{-1}$, we easily obtain
$$P\equiv \prod_{i=1}^kz_{u_i, g}z_{v_i, g^{-1}}+\prod_{i=1}^kz_{v_i, g^{-1}}z_{u_i, g}\pmod {I},$$
and so $U$ contains this last polynomial. For the second polynomial, observe that $[z_{1, g}, z_{1, g^{-1}}]$ is skew symmetric of degree $1$ and, 
as $z_{1, 1}z_{2, 1}\in U$, we conclude that $z_{1, 1}\cdot [z_{1, g}, z_{1, g^{-1}}]\in U$. As before, we obtain 
$$Q:=z_{1, 1}\cdot [z_{u_1, g}, z_{v_1, g^{-1}}]\prod_{i=2}^kz_{u_i, g}\circ z_{v_i, g^{-1}}\in U,$$
and $$Q\equiv z_{1, 1}\cdot \left(\prod_{i=1}^kz_{u_i, g}z_{v_i, g^{-1}}-\prod_{i=1}^kz_{v_i, g^{-1}}z_{u_i, g}\right)\pmod {I},$$
concluding the proof. 
\end{proof}

We are now ready to prove the following result. 

\begin{theorem}\label{thm:central-elementar}
    Let $F$ be an infinite field of characteristic $\ne 2$. Then $Id^{(G, *), z}(\mathbb M^{\gamma_2}_{\bf{g}})=U$.
\end{theorem}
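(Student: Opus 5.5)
The inclusion $U\subseteq Id^{(G,*),z}(\mathbb M^{\gamma_2}_{\bf g})$ is immediate. Every element of $Id^{(G,*)}(\mathbb M^{\gamma_2}_{\bf g})$ is central. Moreover, $(M_2(F)^{(1)})^+=F(e_{11}+e_{22})$ by Table~\ref{tbl:decompositionelem}, so $y_{1,1}$ is central. The set of central $(G,*)$-polynomials is a $T_{(G,*)}$-space, so it contains the $T_{(G,*)}$-space generated by these polynomials.

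For the reverse inclusion, I would mimic the proof of Theorem~\ref{thm:elem-g} and transfer the problem to Lemma~\ref{lem:change-central}. Let $f$ be central. Since $F$ is infinite and $Id^{(G,*),z}$ is a $T_{(G,*)}$-space, the multihomogeneous components of $f$ are again central, so I may assume $f$ is multihomogeneous of degree $n$. By Remark~\ref{rem:sum}, $f\equiv P\pmod{Id^{(G,*)}(\mathbb M^{\gamma_2}_{\bf g})}$, where $P$ is a linear combination of monomials of the forms (i)--(v), and $P$ is still central. Apply the change of variables from the proof of Theorem~\ref{thm:elem-g}: $y_{i,1}\to\bar y_{i,0}$, $z_{i,1}\to\bar z_{i,0}$, $z_{i,g}\to\bar z_{i,1}$, $z_{i,g^{-1}}\to\bar y_{i,1}$. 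This produces $Q$ satisfying the hypotheses of Lemma~\ref{lem:change}. The admissible evaluations of $P$ and $Q$ run over the same subspaces of $M_2(F)$ with the same values, so $Q$ is central for $\mathbb M$. Lemma~\ref{lem:change-central} then shows that $Q$, and hence $P$ after reversing the substitution, is a sum of terms of two kinds. The first kind is $\alpha\, y_{s_1,1}\cdots y_{s_j,1}z_{t_1,1}\cdots z_{t_{2\ell},1}$. The second kind is
$$\alpha\, y_{s_1,1}\cdots y_{s_j,1}z_{t_1,1}\cdots z_{t_k,1}\Bigl(\prod_{i=1}^l z_{u_i,g^{-1}}z_{v_i,g}+(-1)^k\prod_{i=1}^l z_{v_i,g}z_{u_i,g^{-1}}\Bigr).$$
The remaining task is to show that each such term lies in $U$, which is where Lemma~\ref{lem:U} enters.

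By Lemma~\ref{lem:U}, $U$ is closed under products and contains $y_{i,1}$ and $z_{a,1}z_{b,1}$. Hence $U$ contains any product of $y_{i,1}$'s times an even number of $z_{t,1}$'s, which covers the first kind of term and the pure $y$-parts of the second kind. For the second kind I split by the parity of $k$. If $k$ is even, the factor $z_{t_1,1}\cdots z_{t_k,1}$ lies in $U$ (or is absent), and the bracket is the symmetric sum $\prod z_{v_i,g}z_{u_i,g^{-1}}+\prod z_{u_i,g^{-1}}z_{v_i,g}$. This sum lies in $U$ by Lemma~\ref{lem:U}, after matching the roles of $u$ and $v$. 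If $k$ is odd, I write $z_{t_1,1}\cdots z_{t_k,1}=(z_{t_1,1}\cdots z_{t_{k-1},1})\,z_{t_k,1}$. Then $z_{t_k,1}$ times the antisymmetric bracket lies in $U$ by the second polynomial of Lemma~\ref{lem:U}, taken up to a substitution of variables and an overall sign, and the preceding factor lies in $U$. Closure under products then gives the term in $U$.

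I expect the main obstacle to be justifying the use of this closure under products. The factors must be combined in the exact order in which they appear in the monomials: $y$'s first, then the $z_{\cdot,1}$'s, then the alternating block. One must also check that Lemma~\ref{lem:U} applies with a general variable $z_{t_k,1}$ in place of $z_{1,1}$ and with the index conventions of Lemma~\ref{lem:change-central}. Both points follow from $U$ being a $T_{(G,*)}$-space. Finally, since $f-P$ is an identity of $\mathbb M^{\gamma_2}_{\bf g}$ and $Id^{(G,*)}(\mathbb M^{\gamma_2}_{\bf g})\subseteq U$, we get $f\in U$.
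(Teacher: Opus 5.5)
Your proposal is correct and follows the paper's proof essentially step by step. You reduce modulo the identities to the normal forms of Remark~\ref{rem:sum}, transfer to $\mathbb M$ by the same change of variables, apply Lemma~\ref{lem:change-central}, and conclude with Lemma~\ref{lem:U}. Your parity split on $k$ (using $z_{a,1}z_{b,1}\in U$, the two bracket polynomials of Lemma~\ref{lem:U} up to relabeling and sign, and closure under products) only spells out the final step that the paper states in one line.
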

\begin{proof}
We directly verify that every element of $U$ is central. Conversely, fix $P$ a central polynomial. Without loss of generality, we can assume that $P$ is a multihomogeneous polynomial of degree $n$. We observe that any polynomial $P'$ with $P\equiv P'\pmod {Id^{(G, *)}(\mathbb M^{\gamma_2}_{\bf{g}}})$ is also central. In particular, according to Remark~\ref{rem:sum}, we can assume that $P$ is a linear combination of monomials of the forms (i)-(v) as presented in Remark~\ref{rem:sum}.  As in the proof of Theorem~\ref{thm:elem-g}, let $Q\in F\langle \overline{X}| *\rangle$ be the polynomial obtained from $P$ by the following change of variables: $y_{i, 1}\to \bar{y}_{i, 0}$, $z_{i, 1}\to \bar{z}_{i, 0}, z_{i, g}\to \bar{z}_{i, 1}$ and $z_{i, g^{-1}}\to \bar{y}_{i, 1}$. From the properties satisfied by the monomials in $P$, it is clear that $Q$ fulfills the conditions in Lemma~\ref{lem:change}. 

Recall that, from hypothesis, $P$ is a central polynomial for $\mathbb M^{\gamma_2}_{\bf{g}}$. Following the same arguments in the proof of Theorem~\ref{thm:elem-g}, we conclude that $Q$ is a central polynomial for $\mathbb M$, hence $Q$ also fulfills the conditions in Lemma~\ref{lem:change-central}. Therefore, from Lemma~\ref{lem:change-central} and the correspondence between $P$ and $Q$, we conclude that $P$ is a sum of polynomials of the following forms: $\alpha\cdot y_{s_1, 1}\cdots y_{s_j, 1}z_{t_1, 1}\cdots z_{t_{2\ell}, 1}$ and

$$
\alpha\cdot y_{s_1, 1}\cdots y_{s_j, 1}z_{t_1, 1}\cdots z_{t_k, 1}\cdot
\left(
\prod_{i=1}^lz_{u_i, g}z_{v_i, g^{-1}}
+(-1)^k\prod_{i=1}^l z_{v_i, g^{-1}}z_{u_i, g}\right).
$$
From Lemma~\ref{lem:U}, polynomials of these forms belong to $U$. Thus $P\in U$, concluding the proof.   
\end{proof}

We end this section with the analogous results for the algebra $\mathbb M_{\bf{g}}^{\gamma_3}$. The correspondence previously established between $\mathbb M_{\bf{g}}^{\gamma_2}$ and $\mathbb M$ applies in the same way to the pair $(\mathbb M,\mathbb M_{\bf{g}}^{\gamma_3})$, with only a different identification of the variables. More precisely, in this case we consider

$$
y_{i,1}\to \bar{y}_{i,0},\qquad
z_{i,1}\to \bar{z}_{i,0},\qquad
y_{i,g}\to \bar{z}_{i,1},\qquad
y_{i,g^{-1}}\to \bar{y}_{i,1}.
$$

Under this change of variables, the corresponding admissible evaluations range over the same subspaces as in the previous case. Consequently, all the arguments used for $\mathbb M_{\bf{g}}^{\gamma_2}$ carry over verbatim after replacing the variables according to the above correspondence. Theorems~\ref{cor:id-elementary},~\ref{thm:cod-elementar},~\ref{thm:coc-elementar} and~\ref{thm:central-elementar} immediately yield the following result.

\begin{theorem}
Let $F$ be an infinite field of characteristic $\ne 2$. Then the following hold:

\begin{enumerate}[(a)]
    \item The $T_{(G, *)}$-ideal
    $Id^{(G,*)}(\mathbb{M}_{\bf g}^{\gamma_3})$ is generated by $$\{x_{1, h}, z_{1, g}, z_{1, g^{-1}}, y_{1, g}y_{2, g}, y_{1, g^{-1}}y_{2, g^{-1}}, [y_{1, 1}, y_{2, 1}], [y_{1, 1}, z_{1, 1}], [z_{1, 1}, z_{2, 1}]\},$$
where $x=y$ or $x=z$ and $h\in G$ with $h\ne 1, g, g^{-1}$.    
   
    \item $c_n^{(G, *)}(\mathbb{M}_{\bf g}^{\gamma_3})\approx 4^nn^{-1/2}$.

\item Assume that $F$ has characteristic zero and consider the decomposition $\chi_{\langle n\rangle}(\mathbb{M}_{\bf g}^{\gamma_3})=\displaystyle \sum_{\langle \lambda \rangle\vdash \langle n\rangle} m_{\langle \lambda \rangle}\chi_{\langle \lambda \rangle}$ of the $\langle n\rangle$-cocharacter of $\mathbb{M}_{\bf g}^{\gamma_3}$. Then $m_{\langle \lambda \rangle}=0$ unless $|n_3-n_5|\le 1$ and $n_4=n_6=n_j=0$ for every $7\le j\le 2k$. Moreover, assuming the latter, we have  
        
    $$ m_{\langle \lambda \rangle}=\begin{cases}
         2, &\, \text{if}\;\,  n_3=n_5>0;\\
         1, & \, \text{if}\;\, |n_3-n_5|=1\;\, \text{or}\;\, n_3=n_5=0.\end{cases}$$

          \item   $Id^{(G, *), z}(\mathbb M^{\gamma_3}_{\bf{g}})=\langle y_{1, 1}, Id^{(G, *)}(\mathbb M^{\gamma_3}_{\bf{g}})\rangle^{T_{(G, *)}}$.
    
\end{enumerate}

\end{theorem}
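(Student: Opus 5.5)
The plan is to transport each of the four results already proved for $\mathbb M_{\bf g}^{\gamma_2}$ to $\mathbb M_{\bf g}^{\gamma_3}$ via the variable correspondence $y_{i,1}\leftrightarrow \bar y_{i,0}$, $z_{i,1}\leftrightarrow \bar z_{i,0}$, $y_{i,g}\leftrightarrow \bar z_{i,1}$, $y_{i,g^{-1}}\leftrightarrow \bar y_{i,1}$. The justification rests on Table~\ref{tbl:decompositionelem}. Passing from $\gamma_2$ to $\gamma_3$ leaves the degree-$1$ components unchanged, and in degrees $g$ and $g^{-1}$ it exchanges the roles of symmetric and skew: $(M_2(F)^{(g)})^+=Fe_{12}$ for $\gamma_3$ is exactly $(M_2(F)^{(g)})^-$ for $\gamma_2$, and similarly for $g^{-1}$. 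So the composite dictionary $y_{i,g}\leftrightarrow z_{i,g}$, $y_{i,g^{-1}}\leftrightarrow z_{i,g^{-1}}$, identity on the degree-$1$ variables, matches admissible evaluations of $\mathbb M_{\bf g}^{\gamma_3}$ with those of $\mathbb M_{\bf g}^{\gamma_2}$ with identical values in $M_2(F)$. Variables of degree $h\ne 1,g,g^{-1}$ evaluate to $0$ in both algebras and are sent to each other.

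\textbf{Identities, part (a).} I would repeat the proof of Theorem~\ref{thm:elem-g} verbatim with $y$ and $z$ swapped in degrees $g^{\pm1}$. Let $J$ be the $T_{(G,*)}$-ideal generated by the listed polynomials, and check $J\subseteq Id^{(G,*)}(\mathbb M_{\bf g}^{\gamma_3})$ directly.

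For the reverse inclusion I need the $\gamma_3$-analogues of the redundant generators (v)--(vii), namely $[y_{1,1},y_{1,h}]$, $z_{1,1}\circ y_{1,h}$ and $[y_{1,1},z_{2,h}]$ for $h=g,g^{-1}$. Each of these follows from $z_{1,h}\in J$ and from the three commutator generators, mirroring Theorem~\ref{cor:id-elementary}. Note that the $\gamma_2$ generator $z_{1,1}\circ z_{1,h}$ becomes $z_{1,1}\circ y_{1,h}$, and it is still an identity because $e_{11}-e_{22}$ anticommutes with $e_{12}$ and $e_{21}$.

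The reduction then brings any multihomogeneous identity to a combination $P$ of monomials of the forms in Remark~\ref{rem:sum}, with $z_{\cdot,g^{\pm1}}$ replaced by $y_{\cdot,g^{\pm1}}$. This includes the reordering step: the computation with $z_{1,g^{-1}}\circ[z_{1,g},z_{2,g^{-1}}]$ is reproduced using $y$'s and the generator $z_{1,1}\circ y_{1,h}$, since $[y_{1,g},y_{2,g^{-1}}]$ is still skew of degree $1$.

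Applying the new dictionary sends $P$ to $Q\in Id^*(\mathbb M)$ satisfying the hypotheses of Lemma~\ref{lem:change}. Hence $Q=0$, so $P=0$.

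\textbf{Codimensions and cocharacter, parts (b) and (c).} These come from the $\gamma_3$ version of Remark~\ref{rem:sum}. Now the nonzero slots of $\langle n\rangle$ are $n_1,n_2$ (degree $1$) together with $n_3$ (symmetric of degree $g$) and $n_5$ (symmetric of degree $g^{-1}$), in place of $n_4,n_6$. The counting is then identical to that in Theorems~\ref{thm:cod-elementar} and~\ref{thm:coc-elementar}: we get $c_{\langle n\rangle}=2$ if $n_3=n_5>0$, and $c_{\langle n\rangle}=1$ if $|n_3-n_5|=1$ or $n_3=n_5=0$. This gives the same sum as before, hence $\approx 4^n n^{-1/2}$, and the same multiplicities.

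\textbf{Central polynomials, part (d).} I would redo Lemma~\ref{lem:U} with $U'=\langle y_{1,1}, Id^{(G,*)}(\mathbb M_{\bf g}^{\gamma_3})\rangle^{T_{(G,*)}}$. The key facts carry over: $y_{1,g}\circ y_{1,g^{-1}}$ is symmetric of degree $1$, and $[y_{1,g},y_{1,g^{-1}}]$ is skew of degree $1$ because $(ab-ba)^*=b^*a^*-a^*b^*$. The proof of Theorem~\ref{thm:central-elementar} then goes through with Lemma~\ref{lem:change-central}.

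\textbf{Main obstacle.} The only real check is sign bookkeeping. The $\gamma_2$ arguments used the relation $(z_{u}z_{v})^*=z_vz_u$; here $(y_uy_v)^*=y_vy_u$ holds as well, so the signs agree. Moreover, the dictionary to $\mathbb M$ is evaluation-preserving, so Lemmas~\ref{lem:change} and~\ref{lem:change-central} apply without modification.
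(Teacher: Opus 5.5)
Your proposal is correct and takes essentially the same approach as the paper. The paper notes that the dictionary $y_{i,1}\to\bar y_{i,0}$, $z_{i,1}\to\bar z_{i,0}$, $y_{i,g}\to\bar z_{i,1}$, $y_{i,g^{-1}}\to\bar y_{i,1}$ makes admissible evaluations range over the same subspaces as for $\mathbb M$, so the $\gamma_2$ arguments carry over verbatim; your explicit checks fill in exactly what the paper leaves implicit, namely that $[y_{1,1},y_{1,h}]$ and $z_{1,1}\circ y_{1,h}$ are skew of degree $h$ and hence consequences of $z_{1,h}$, that $[y_{1,g},y_{2,g^{-1}}]$ is skew of degree $1$ for the reordering step, and that $(y_uy_v)^*=y_vy_u$ keeps the signs in the central-polynomial lemma.
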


\section{$M_2(F)$ with $(-1)$-graded involution}

In this section, we deal with the algebra $M_2(F)$ when it is equipped with the $(-1)$-grading and endowed with the involutions $\gamma_i, 1\leq i\leq 4$, as in Theorem \ref{-1}. We write the Klein group $K=\{1, a, b, ab\}$. In Table~\ref{decomposition-1} we present the decomposition of $M_2(F)$ into symmetric and skew homogeneous components, when it is equipped with $(-1)$-grading and endowed with involution $\gamma_i$.

\begin{table}[h!]\label{decomposition-1}
\centering
\begin{tabular}{|c|c|c|c|c|}
\hline
{}& $\gamma_1$& $\gamma_2$& $\gamma_3$& $\gamma_4$\\ \hline
 $(M_2(F)^{(1)})^+$& $F(e_{11}+e_{22})$& $F(e_{11}+e_{22})$& $F(e_{11}+e_{22})$& $F(e_{11}+e_{22})$\\ \hline
 $(M_2(F)^{(1)})^-$& \{0\}& \{0\}& \{0\}& \{0\}\\ \hline
 $(M_2(F)^{(a)})^+$& $F(e_{12}+e_{21})$& \{0\}& $F(e_{12}+e_{21})$& \{0\}\\ \hline
 $(M_2(F)^{(a)})^-$& \{0\}& $F(e_{12}+e_{21})$& \{0\}& $F(e_{12}+e_{21})$ \\ \hline
 $(M_2(F)^{(b)})^+$& $F(e_{11}-e_{22})$& \{0\}& \{0\}& $F(e_{11}-e_{22})$\\ \hline
 $(M_2(F)^{(b)})^-$& \{0\}& $F(e_{11}-e_{22})$& $F(e_{11}-e_{22})$& \{0\}\\ \hline
 $(M_2(F)^{(ab)})^+$& \{0\}& \{0\}& $F(e_{12}-e_{21})$& $F(e_{12}-e_{21})$ \\ \hline
 $(M_2(F)^{(ab)})^-$& $F(e_{12}-e_{21})$& $F(e_{12}-e_{21})$& \{0\}& \{0\}\\ \hline
\end{tabular}\vspace{0.3cm}\caption{$M_2(F)$ with $(-1)$-grading and its graded involutions}
\end{table}

Let $\mathbb M_{-1}^{\gamma_i}$ denote the algebra $M_2(F)$ endowed with involution $\gamma_i, 1\leq i\leq 4,$ and equipped with the $(-1)$-grading.
   
\begin{remark}\label{rem:id-equiv-klein}
 Following the notation of Lemma~\ref{lem:id-equiv}, we observe that the algebras $A_1=\mathbb M^{\gamma_1}_{-1}$ and $A_2=\mathbb M^{\gamma_2}_{-1}$ satisfy all the conditions in Lemma~\ref{lem:id-equiv}. Indeed, both are endowed with $K$-graded involutions and, for every $g\in K$, we have $\{(A_1^{(g)})^{\pm}\}=\{(A_2^{(g)})^{\pm}\}$. Moreover, if $\varepsilon_g\in \Z_2$ is as in Lemma~\ref{lem:id-equiv}, we have $\varepsilon_1=\varepsilon_{ab}=0$ and $\varepsilon_a=\varepsilon_b=1$. In particular, the map $g\mapsto \varepsilon_g$ is a group homomorphism. The same can be verified for the cases $A_2=\mathbb M^{\gamma_i}_{-1}$ and $i=3, 4$. In particular, from Lemma~\ref{lem:id-equiv} and Corollary~\ref{cor:equal}, it suffices to study the algebra $\mathbb M^{\gamma_1}_{-1}$.
\end{remark}


\subsection{$(K,*)$-identities and the $\langle n \rangle$-cocharacter} In this subsection we shall present the generators of the $T_{(K,*)}$-ideal of identities of $M_2(F)$ when it is endowed with the $K$-graded involutions $\gamma_i, 1\leq i\leq 4.$ Moreover, we explicitly compute the sequence $c_{n}^{(G,*)}(M_2(F))$ in these cases and present the sequence of $\langle n\rangle$-cocharacters of $M_2(F)$ in each case. We begin with the following definition.

\begin{definition}\label{def:notation}
  For a partition $S\sqcup T\sqcup U\sqcup V=\{1,\ldots,n\}$, set
\[
(YZ)_{S,T,U,V}
:=
\prod_{s\in S} y_{s,1}\,
\prod_{t\in T}y_{t,a}\,
\prod_{u\in U} y_{u,b}\,
\prod_{v\in V}z_{v,ab}\in F\langle X|*\rangle ,
\]
where each product is taken with the indices in increasing order (with the convention that the product is one if the corresponding set is empty).
\end{definition}

\begin{theorem}\label{thm:id-klein}
    Let $F$ be a field of characteristic zero. Then the $T_{(K,*)}$-ideal $I=Id^{(K,*)}(\mathbb M_{-1}^{\gamma_1})$ is generated by the following polynomials:

    \begin{enumerate}
    \item  $y_{1, ab}, z_{1, 1}, z_{1, a}$ and $ z_{1, b}$;
			\item $[y_{1, 1}, x]$ with $x\in \{y_{2, 1}, y_{2, a}, y_{2, b}, z_{2, ab}\}$;
 \item $y_{1, a}\circ y_{1, b}$;
 \item $y_{1, g}\circ z_{1, ab}$ with $g=a, b$. 
		\end{enumerate}
\end{theorem}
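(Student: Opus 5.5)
The plan is to prove the two inclusions separately. The inclusion $\langle\,\cdot\,\rangle_{T_{(K,*)}}\subseteq I$ is a direct check on Table~\ref{decomposition-1}. For the reverse inclusion, I will show that, modulo the $T_{(K,*)}$-ideal $J$ generated by (1)--(4), every multilinear (or multihomogeneous) polynomial is a scalar multiple of a single normal monomial $(YZ)_{S,T,U,V}$ of Definition~\ref{def:notation}. I will then show that no nonzero multiple of such a monomial is an identity.

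\textbf{Step 1 (generators are identities).} For $\gamma_1$ put $I_2=e_{11}+e_{22}$, $A=e_{12}+e_{21}$, $B=e_{11}-e_{22}$ and $C=e_{12}-e_{21}$. The admissible values are as follows: $y_{\cdot,1}\in FI_2$, $y_{\cdot,a}\in FA$, $y_{\cdot,b}\in FB$ and $z_{\cdot,ab}\in FC$, while all other components vanish. This gives (1). Scalar matrices are central, which gives (2). The matrices $A,B,C$ pairwise anticommute, which gives (3) and (4).

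\textbf{Step 2 (reduction modulo $J$).} Let $f$ be multilinear of degree $n$. By (1) we may assume that only the variables $y_{s,1},y_{t,a},y_{u,b},z_{v,ab}$ occur. By (2) the variables $y_{s,1}$ can be moved to the front.

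The key observation is that commutators and anticommutators of equal-degree variables reduce to consequences of the generators. For instance, $[y_{1,a},y_{2,a}]$ is skew of degree $1$, so it is obtained from $z_{1,1}$ by a $(K,*)$-substitution and lies in $J$. Similarly, $[y_{1,b},y_{2,b}]\in J$ and $[z_{1,ab},z_{2,ab}]\in J$, since the latter is also skew of degree $1$. Hence variables of the same degree commute modulo $J$.

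Relations (3) and (4) let variables of distinct degrees among $a,b,ab$ be swapped at the cost of a sign. Therefore every monomial is congruent to $\pm(YZ)_{S,T,U,V}$, where $S,T,U,V$ are the index sets of the variables of degree $1,a,b,ab$ respectively. Since these sets are determined by the multilinear space $P_{\langle n\rangle}$, we get $f\equiv \alpha\,(YZ)_{S,T,U,V}\pmod J$ for a single scalar $\alpha$.

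\textbf{Step 3 (non-vanishing).} Evaluate the variables as $y_{s,1}\mapsto \alpha_s I_2$, $y_{t,a}\mapsto\beta_tA$, $y_{u,b}\mapsto\gamma_uB$ and $z_{v,ab}\mapsto\delta_vC$, with all scalars equal to $1$. Since $A^2=B^2=I_2$, $C^2=-I_2$ and $AB=-C$, every product of these matrices is $\pm$ one of the invertible matrices $I_2,A,B,C$. So $(YZ)_{S,T,U,V}$ evaluates to a nonzero matrix. Thus if $f\in I$, then $\alpha=0$, and hence $f\in J$. As $\mathrm{char}\,F=0$, multilinear polynomials suffice to generate $I$, which concludes $I=J$.

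The step I expect to require the most care is Step 2. There one must make sure that all equal-degree commutations genuinely follow from (1) by substituting suitable skew elements of degree $1$. One must also track the signs when reordering, so that each multilinear component really collapses to a single normal monomial. Once that is done, the evaluation in Step 3 is immediate.
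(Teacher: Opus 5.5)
Your proof is correct, and it takes a genuinely simpler route than the paper's.

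\textbf{What the paper does.} The paper performs the same reduction to the normal monomials $(YZ)_{S,T,U,V}$ of Definition~\ref{def:notation}. It uses (1)--(4), plus the fact that a commutator of two variables of equal type is skew of degree $1$ and hence a consequence of $z_{1,1}$. After that, however, it keeps $P$ as an arbitrary multilinear polynomial in which several normal monomials with different $(S,T,U,V)$ may occur. It then:
\begin{itemize}
\item splits $P$ into the pieces $P_{m,\pm}$ according to the parities of $|T|,|U|,|V|$ and $\lfloor |V|/2\rfloor$;
\item evaluates at generic scalar multiples of $e_{11}+e_{22}$, $e_{12}+e_{21}$, $e_{11}-e_{22}$ and $e_{12}-e_{21}$;
\item solves a $4\times 4$ linear system for the commutative images $Q_m$;
\item separates the surviving monomials using Lemmas~\ref{lem:vanish} and~\ref{lem:com}.
\end{itemize}

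\textbf{What you do instead.} You first isolate a single component with a fixed type assignment. Such a component is congruent to one monomial $\alpha(YZ)_{S,T,U,V}$. A single evaluation, with all scalars equal to $1$, then forces $\alpha=0$, because $A,B,C$ are invertible and generate $\{\pm I_2,\pm A,\pm B,\pm C\}$. This avoids Lemma~\ref{lem:com} and all the parity bookkeeping.

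\textbf{One point to make explicit.} You leave the component reduction implicit by appealing to ``the multilinear space $P_{\langle n\rangle}$''. You should state it: a multilinear element of $I$ splits into components in which each index $i$ carries a fixed type, and each such component again lies in $I$. To isolate it, substitute $0$ for the variables of index $i$ of the unwanted types; this substitution is a graded endomorphism commuting with $*$.

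\textbf{What each route buys.} Your method transfers just as easily to the central polynomials of Theorem~\ref{thm:klein-central}. A normal monomial evaluates to a nonzero multiple of $A^{e_T}B^{e_U}C^{e_V}$, and since $ABC=I_2$ this is central if and only if $e_T=e_U=e_V$. It also extends to multihomogeneous polynomials over infinite fields of characteristic $\neq 2$: use multisets, since the multidegree again fixes $S,T,U,V$. So the paper's generic-scalar machinery mainly serves as a uniform template, reused in Theorem~\ref{thm:klein-central} and Remark~\ref{rem:homo-klein}. Your argument is shorter and more transparent for both purposes.
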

\begin{proof}
Let $J$ be the $T_{(K,*)}$-ideal generated by the above polynomials. It is direct to verify that $J\subseteq I$. Now let $f\in I$. Since $F$ has characteristic $0$, we can assume that $f$ is multilinear of degree $n\ge 1$. From the identities in (1), we see that $f\equiv P\pmod J$, where $P$ is a multilinear polynomial of degree $n$ without monomials with any of the variables $y_{1, ab}, z_{1, 1}, z_{1, a}$ and $ z_{1, b}$. Moreover, from identities (2), (3) and (4), we can also assume that any monomial appearing in $P$ has the form $\alpha \cdot M_1\cdot M_a\cdot M_b\cdot M_{ab}$, where $M_g$ only contains variables of the type $y_{i, g}$ if $g\in \{1, a, b\}$ and $M_{ab}$ only contains variables of the type $z_{i, ab}$. 

Since $g^2=1$ for every $g\in K$, the commutator $[w_1,w_2]$ is skew-symmetric and homogeneous of degree $1$ whenever $w_1$ and $w_2$ are variables of the same symmetry and homogeneous degree. Hence, $[w_1,w_2]$ is a consequence of the identity $z_{1,1}$. Therefore, we may assume that the variables in each $M_g$ are ordered according to their indices. With Definition~\ref{def:notation} in mind, it follows that every monomial in $P$ has the form $\alpha(YZ)_{S,T,U,V}$.

We now compute the image of each such monomial with an arbitrary admissible $(K, *)$-evaluation $\eta$: $y_{i, 1}=a_i(e_{11}+e_{22}), z_{i, 1}=0, y_{i, a}=b_i(e_{12}+e_{21}), z_{i, a}=0, y_{i, b}=c_i(e_{11}-e_{22}), z_{i, b}=0, y_{i, ab}=0, z_{i, ab}=d_i(e_{12}-e_{21})$. A direct computation yields
\begin{equation}\label{eq:evaluation}
    \eta((YZ)_{S,T,U,V})=\pi\cdot (-1)^{\left\lfloor\frac{|V|}{2}\right\rfloor}\cdot (e_{12}+e_{21})^{e_T}(e_{11}-e_{22})^{e_U}(e_{12}-e_{21})^{e_V},
\end{equation}
where $$\pi=\prod_{s\in S}a_s\prod_{t\in T}b_t\prod_{u\in U}c_u\prod_{v\in V}d_v,$$
and $e_T, e_U, e_V\in \{0, 1\}$ are the reductions $\pmod 2$ of the numbers $|T|, |U|$ and $|V|$, respectively.

In particular, it is clear that the parities of $|T|$, $|U|$, $|V|$ and $\left\lfloor\frac{|V|}{2}\right\rfloor$ play a crucial role in the evaluations. For each $m\in\{0,\ldots,7\}$, write
\[
m=m_0\cdot 2^0+m_1\cdot 2^1+m_2\cdot  2^2,
\qquad m_0,m_1,m_2\in\{0,1\}.
\]
We then have
\[
P=P_0+P_1+\cdots+P_7,
\]
where $P_m$ consists of the monomials $\alpha(YZ)_{S,T,U,V}$ of $P$ satisfying
\[
|T|\equiv m_0\pmod 2,\qquad
|U|\equiv m_1\pmod 2,\qquad
|V|\equiv m_2\pmod 2.
\]
We further write $P_{i}=P_{i, +}+P_{i, -}$, where $P_{i, +}$ and $P_{i, -}$ collect the monomials with $\left\lfloor\frac{|V|}{2}\right\rfloor$ being even and odd, respectively.

We want to prove that each $P_{m}$ vanishes. The idea is to take an arbitrary admissible evaluation and employ Lemma~\ref{lem:com}. 

From hypothesis, $P$ is an identity. Take $y_{i, 1}=a_i(e_{11}+e_{22}), z_{i, 1}=0, y_{i, a}=b_i(e_{12}+e_{21}), z_{i, a}=0, y_{i, b}=c_i(e_{11}-e_{22}), z_{i, b}=0, y_{i, ab}=0, z_{i, ab}=d_i(e_{12}-e_{21})$, where the $a_i, b_i, c_i, d_i$'s are arbitrary elements of $F$. If $Q_{i}$ denotes the image of $(P_{i, +}-P_{i, -})^C$ at the elements $a_i, b_i, c_i, d_i$, \eqref{eq:evaluation} yields the following:
\begin{align*}0&=(e_{11}+e_{22})Q_0+(e_{12}+e_{21})Q_1+(e_{11}-e_{22})Q_2+(e_{21}-e_{12})Q_3\\{}&+(e_{12}-e_{21})Q_4+(e_{22}-e_{11})Q_5+(e_{12}+e_{21})Q_6+(e_{11}+e_{22})Q_7.\end{align*}
Since the set $\{e_{11}, e_{12}, e_{21}, e_{22}\}$ is linearly independent in the vector space $M_2(F)$, the last equality yields the following linear system:
$$\begin{cases}
    Q_0+Q_2-Q_5+Q_7=0\\Q_1-Q_3+Q_4+Q_6=0\\ Q_1+Q_3-Q_4+Q_6=0\\ Q_0-Q_2+Q_5+Q_7=0. 
\end{cases}$$
We solve the system and obtain  $$Q_0+Q_7=Q_1+Q_6=Q_2-Q_5=Q_3-Q_4=0.$$ 
From Lemma~\ref{lem:vanish}, we conclude that 
\begin{equation}\label{eq:vanish}
    ((P_{i, +}-P_{i, -})+\varepsilon_i\cdot (P_{7-i, +}-P_{7-i, -}))^C=0,\, 0\le i\le 3,
\end{equation}
where $\varepsilon_0=\varepsilon_1=1\in F$  and $\varepsilon_2=\varepsilon_3=-1\in F$. From construction, it is clear that the polynomials $(P_{i, +}-P_{i, -})\pm (P_{j, +}-P_{j, -})$ do not contain two distinct monomials $M, N$ with $M^C=N^C$ if $i\ne j$. Therefore, \eqref{eq:vanish}
combined with Lemma~\ref{lem:com} implies that 
$$(P_{i, +}-P_{i, -})+\varepsilon_i\cdot (P_{7-i, +}-P_{7-i, -})=0.$$ 
However, it follows by the construction of the $P_{i, \pm}$'s that no monomial cancellation can occur in the last equality. In particular, we obtain $P_{j, \pm}=0$ for every $0\le j\le 7$. Thus $P_m=0$ for every $0\le m\le 7$ and so $P=0$. Since $f\equiv P\pmod J$, it follows that $f\in J$. In conclusion, $I\subseteq J$ and this completes the proof.
\end{proof}

As follows, we provide a minimal set of generators for $Id^{(K,*)}(\mathbb M_{-1}^{\gamma_1})$.

\begin{theorem}\label{cor:klein}
    Let $F$ be a field of characteristic zero. Then 
    $$Id^{(K,*)}(\mathbb M_{-1}^{\gamma_1})=\langle y_{1, ab}, z_{1, 1}, z_{1, a}, z_{1, b}\rangle_{T_{(K,*)}}.$$  
\end{theorem}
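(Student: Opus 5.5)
Let $J=\langle y_{1,ab}, z_{1,1}, z_{1,a}, z_{1,b}\rangle_{T_{(K,*)}}$. Since the listed polynomials appear in item (1) of Theorem~\ref{thm:id-klein}, we have $J\subseteq Id^{(K,*)}(\mathbb M_{-1}^{\gamma_1})$. By Theorem~\ref{thm:id-klein}, the reverse inclusion reduces to showing that the generators in items (2), (3) and (4) lie in $J$. The tool is the observation already used in the proof of Theorem~\ref{thm:id-klein}: every element of $K$ has order at most $2$, so for any homogeneous polynomials $w_1,w_2$ of degrees $g,h$ and symmetry types $\epsilon_1,\epsilon_2\in\{\pm1\}$, both $w_1\circ w_2$ and $[w_1,w_2]$ are homogeneous of degree $gh$. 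Moreover, one of them is symmetric and the other skew, with $(w_1w_2)^*=\epsilon_1\epsilon_2 w_2w_1$. Whenever the relevant polynomial is skew of degree $1$, $a$ or $b$, or symmetric of degree $ab$, it is obtained from $z_{1,1},z_{1,a},z_{1,b}$ or $y_{1,ab}$ by a substitution that preserves grading and commutes with the involution. Hence it lies in $J$.

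The items can now be checked one at a time.
\begin{itemize}
\item Item (2): $[y_{1,1},y_{2,1}]$ is skew of degree $1$; $[y_{1,1},y_{2,a}]$ is skew of degree $a$; $[y_{1,1},y_{2,b}]$ is skew of degree $b$. In each case the symmetric and skew factors give $[w_1,w_2]^*=-[w_1,w_2]$, since both factors are symmetric. For $[y_{1,1},z_{2,ab}]$, one factor is symmetric and one skew, so the commutator is symmetric of degree $ab$ and is a consequence of $y_{1,ab}$.
\item Item (3): $y_{1,a}\circ y_{1,b}$ is symmetric of degree $ab$, so it follows from $y_{1,ab}$.
\item Item (4): $y_{1,a}\circ z_{1,ab}$ is skew of degree $b$, and $y_{1,b}\circ z_{1,ab}$ is skew of degree $a$. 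These are consequences of $z_{1,b}$ and $z_{1,a}$, respectively.
\end{itemize}

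Each verification is a single symmetry computation: $(uv)^*=v^*u^*$, together with the grading multiplication in $K$. The only place needing care is the sign bookkeeping, namely checking that the commutator or Jordan product lands exactly in a component that is zero in Table~\ref{decomposition-1} for $\gamma_1$. Those components are $(M^{(1)})^-$, $(M^{(a)})^-$, $(M^{(b)})^-$ and $(M^{(ab)})^+$.

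The substitution step must be justified. The map sending $z_{1,g}\mapsto w$, where $w$ is a skew homogeneous polynomial of degree $g$, and fixing the other variables, extends to a $(K,*)$-endomorphism of $\mathcal F$. This holds because $z_{1,g}^*=-z_{1,g}$ and $w^*=-w$, and the analogous statement holds for $y_{1,ab}$. Combining the three items with Theorem~\ref{thm:id-klein} gives $Id^{(K,*)}(\mathbb M_{-1}^{\gamma_1})\subseteq J$, and therefore equality.
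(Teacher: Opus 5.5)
Your proposal is correct and follows the paper's proof: both reduce, via Theorem~\ref{thm:id-klein}, to checking that each generator in items (2)--(4) is skew of degree $1$, $a$ or $b$, or symmetric of degree $ab$, and is therefore a consequence of $z_{1,1}$, $z_{1,a}$, $z_{1,b}$ or $y_{1,ab}$. Your explicit justification of the substitution as a $(K,*)$-endomorphism is a useful detail that the paper leaves implicit; also, only the commutativity of $K$, not its exponent $2$, is needed for the products to have degree $gh$.
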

\begin{proof}
    Observe that, for every $g\in K$, the element $[y_{1, 1}, y_{2, g}]$ is skew symmetric of homogeneous degree $g$. In particular, this element is a consequence of $z_{1, g}$ if $g\ne ab$. Moreover, $y_{1, a}\circ y_{1, b}$ and $[y_{1, 1}, z_{1, ab}]$ are symmetric of homogeneous degree $ab$, hence both are consequences of $y_{1, ab}$. Finally, for $g=a, b$, the element $y_{1, g}\circ z_{1, ab}$ is skew symmetric of degree $a$ or $b$, hence a consequence of $z_{1, a}$ or $z_{1, b}$. These observations combined with Theorem~\ref{thm:id-klein} complete the proof.
\end{proof}

Using Lemma~\ref{lem:id-equiv} and Remark~\ref{rem:id-equiv-klein}, 
we see that Theorem~\ref{cor:klein} yields the following result.
\begin{theorem}\label{thm:gamma_i}
    Let $F$ be a field of characteristic zero. Then the following holds:
\begin{enumerate}[(a)]
    \item $Id^{(K,*)}(\mathbb M_{-1}^{\gamma_2})=\langle y_{1, a}, y_{1, b}, y_{1, ab}, z_{1, 1}\rangle_{T_{(K,*)}}$;
    \item $Id^{(K,*)}(\mathbb M_{-1}^{\gamma_3})=\langle y_{1, b}, z_{1, 1}, z_{1, a}, z_{1, ab}\rangle_{T_{(K,*)}}$;
    \item $Id^{(K,*)}(\mathbb M_{-1}^{\gamma_4})=\langle y_{1, a}, z_{1, 1}, z_{1, b}, z_{1, ab}\rangle_{T_{(K,*)}}$.
\end{enumerate} 
\end{theorem}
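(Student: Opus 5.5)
The plan is to transport Theorem~\ref{cor:klein} along the bijection $\varphi$ of Lemma~\ref{lem:id-equiv}(b). By Remark~\ref{rem:id-equiv-klein}, the pairs $A_1=\mathbb M_{-1}^{\gamma_1}$ and $A_2=\mathbb M_{-1}^{\gamma_i}$, $i=2,3,4$, satisfy the hypotheses of that lemma, and the map $g\mapsto\varepsilon_g$ is a homomorphism. So for each $i$ I will first read off $\varepsilon_g$ from Table~\ref{decomposition-1}. Then I apply $\varphi$ to the four generators $y_{1,ab}, z_{1,1}, z_{1,a}, z_{1,b}$, and Lemma~\ref{lem:id-equiv}(b) gives the generators of $Id^{(K,*)}(\mathbb M_{-1}^{\gamma_i})$ directly.

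Concretely, comparing the symmetric parts in Table~\ref{decomposition-1} with those of $\gamma_1$ (symmetric parts in degrees $1,a,b$; skew part in degree $ab$) gives the following.

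For $\gamma_2$, the symmetric part sits only in degree $1$. Hence $\varepsilon_1=\varepsilon_{ab}=0$ and $\varepsilon_a=\varepsilon_b=1$. Then $\varphi$ sends
\[
y_{1,ab}\mapsto y_{1,ab},\quad z_{1,1}\mapsto z_{1,1},\quad z_{1,a}\mapsto y_{1,a},\quad z_{1,b}\mapsto y_{1,b},
\]
which is the list in (a).

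For $\gamma_3$, the symmetric parts are in degrees $1,a,ab$. So $\varepsilon_1=\varepsilon_a=0$ and $\varepsilon_b=\varepsilon_{ab}=1$. The generators become $z_{1,ab}, z_{1,1}, z_{1,a}, y_{1,b}$, giving (b).

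For $\gamma_4$, the symmetric parts are in degrees $1,b,ab$. So $\varepsilon_1=\varepsilon_b=0$ and $\varepsilon_a=\varepsilon_{ab}=1$. The generators become $z_{1,ab}, z_{1,1}, y_{1,a}, z_{1,b}$, giving (c).

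In each case I will also confirm that $\varepsilon$ is a homomorphism $K\to\mathbb Z_2$. This is automatic once $\varepsilon_1=0$ and $\varepsilon_{ab}=\varepsilon_a+\varepsilon_b$ hold, and all three cases satisfy these conditions.

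There is no substantial obstacle here. The only care needed is the bookkeeping: the correct direction of $\varphi$, from the $\gamma_1$ free algebra to the $\gamma_i$ free algebra, and reading the symmetric/skew swaps correctly from the table. The homomorphism condition, which is needed for item (b) of the lemma, has just been checked above.
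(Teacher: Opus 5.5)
Your proposal is correct and matches the paper's argument: transport the generators of Theorem~\ref{cor:klein} along the map $\varphi$ of Lemma~\ref{lem:id-equiv}(b), with the hypotheses supplied by Remark~\ref{rem:id-equiv-klein}. Your values of $\varepsilon_g$ and the resulting generator lists for $\gamma_2,\gamma_3,\gamma_4$ all check out against the table, and your homomorphism check fills in the ``same can be verified'' step that the paper leaves to the reader.
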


\begin{remark}\label{rem:homo-klein}
In the proof of Theorem~\ref{thm:id-klein} we assumed the field $F$ has characteristic zero in order to work only with multilinear polynomials. However, the proof is essentially the same if we assume that $f\in I$ is not multilinear. In fact, in this case, we just need to extend the definition of $(YZ)_{S, T, U, V}$ by allowing $S, T, U$ and $V$ to be {\em multisets} of $\{1, \ldots, n\}$. We then use the same decomposition $P=P_0+\cdots+P_7$; the resolution of the system of equations on the $Q_i$'s is still valid if $F$ has characteristic $\ne 2$, and Lemma~\ref{lem:com} remains applicable in this new setting. Therefore, Theorems~\ref{thm:id-klein}, ~\ref{cor:klein} and~\ref{thm:gamma_i} hold over any infinite field of characteristic different from $2$.
\end{remark}

Theorem~\ref{thm:id-klein} readily yields the $n$-th \gi-codimension of $\mathbb M_{-1}^{\gamma_1}$.

\begin{corollary}\label{cor:cod}
For each $n\ge 1$ and each $i\in \{1, 2, 3, 4\}$, $c_n^{(K,*)}(\mathbb M_{-1}^{\gamma_i})=4^n$.
\end{corollary}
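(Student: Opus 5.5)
By Corollary~\ref{cor:equal} together with Remark~\ref{rem:id-equiv-klein}, all four algebras $\mathbb M_{-1}^{\gamma_i}$ have the same $(K,*)$-codimensions. It therefore suffices to treat $i=1$. The plan is to compute each $\langle n\rangle$-codimension of $\mathbb M_{-1}^{\gamma_1}$ and then sum them using Eq.~\eqref{293-}.

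We order the $2k=8$ variable types as $y_{\cdot,1},z_{\cdot,1},y_{\cdot,a},z_{\cdot,a},y_{\cdot,b},z_{\cdot,b},y_{\cdot,ab},z_{\cdot,ab}$. By Theorem~\ref{cor:klein}, the variables $z_{\cdot,1},z_{\cdot,a},z_{\cdot,b},y_{\cdot,ab}$ are identities. Hence $c_{\langle n\rangle}(\mathbb M_{-1}^{\gamma_1})=0$ unless $n_2=n_4=n_6=n_7=0$. The surviving compositions are $n=n_1+n_3+n_5+n_8$, with sizes $|S|,|T|,|U|,|V|$ in the notation of Definition~\ref{def:notation}.

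For such a composition, the reduction in the proof of Theorem~\ref{thm:id-klein} shows that every multilinear polynomial in $P_{\langle n\rangle}$ is congruent modulo $I$ to a scalar multiple of the single monomial $(YZ)_{S,T,U,V}$, where $S,T,U,V$ are the fixed index blocks of this composition. This monomial is not an identity: formula~\eqref{eq:evaluation} with all scalars equal to $1$ gives $\pm$ a product of invertible matrices, which is nonzero. Hence $c_{\langle n\rangle}(\mathbb M_{-1}^{\gamma_1})=1$ for each of these compositions.

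Substituting into Eq.~\eqref{293-},
\[
c_n^{(K,*)}(\mathbb M_{-1}^{\gamma_1})=\sum_{n_1+n_3+n_5+n_8=n}\binom{n}{n_1,n_3,n_5,n_8}=4^n
\]
by the multinomial theorem.

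The only step needing care is the spanning claim: every monomial of type $\langle n\rangle$ reduces modulo $I$ to $\pm(YZ)_{S,T,U,V}$. This follows from the identities listed in Theorem~\ref{thm:id-klein}, used exactly as in its proof. $y_{\cdot,1}$ is central. The $y_{\cdot,a}$, $y_{\cdot,b}$ and $z_{\cdot,ab}$ variables pairwise anticommute, by items (3) and (4). Two variables of the same type commute modulo $z_{1,1}$. Together these let us reorder any monomial into the canonical form up to a sign.

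Alternatively, one can check directly that the monomials of a given type span a one-dimensional space modulo identities. Evaluating on the basis elements $e_{11}+e_{22}$, $e_{12}+e_{21}$, $e_{11}-e_{22}$, $e_{12}-e_{21}$, these elements pairwise commute or anticommute and each squares to $\pm I_2$. Hence any two monomials with the same multiset of variables evaluate to the same matrix up to a fixed sign, so their difference (or sum) is an identity.
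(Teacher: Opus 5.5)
Your proposal is correct and follows the paper's proof essentially verbatim. You reduce to $\gamma_1$ via Corollary~\ref{cor:equal} and Remark~\ref{rem:id-equiv-klein}, keep only the compositions with $n_2=n_4=n_6=n_7=0$, show each such $P_{\langle n\rangle}(\mathbb M_{-1}^{\gamma_1})$ is spanned by the single monomial $(YZ)_{S,T,U,V}$, and sum the multinomial coefficients to get $4^n$. Your explicit check that this monomial is not an identity (evaluating at all scalars equal to $1$ gives a product of invertible matrices) is a useful detail that the paper leaves implicit in the proof of Theorem~\ref{thm:id-klein}.
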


\begin{proof}
From Remark~\ref{rem:id-equiv-klein} and Corollary~\ref{cor:equal}, it suffices to consider the case $i=1$. We order the elements of $K$ as $K=\{g_1=1, g_2=a, g_3=b, g_4=ab\}$ and, for each composition $\langle n\rangle=(n_1, \ldots, n_8)$ of $n\ge 1$, $P_{\langle n\rangle}(\mathbb M_{-1}^{\gamma_1})$ denotes the corresponding quotient space of multilinear $(K, *)$-polynomials as in Section 3.
Eq.~\eqref{293-} readily yields \begin{equation}\label{codim}
    c_n^{(K, *)}(\mathbb M_{-1}^{\gamma_1})=\displaystyle \sum_{\langle n\rangle}\binom{n}{{\langle n\rangle}}c_{\langle n\rangle}(\mathbb M_{-1}^{\gamma_1}).
\end{equation}
From Theorem~\ref{cor:klein}, we know that $P_{\langle n\rangle}(\mathbb M_{-1}^{\gamma_1})$ is the null space unless $n_2=n_4=n_6=n_7=0$. Moreover, if the latter holds, we follow the proof of Theorem~\ref{thm:id-klein} and conclude that the quotient space $P_{\langle n\rangle}(\mathbb M_{-1}^{\gamma_1})$ is generated by the monomial
 $$y_{1, 1}\cdots y_{n_1, 1}\cdot y_{1, a}\cdots y_{n_3, a}\cdot y_{1, b}\cdots y_{n_5, b}\cdot z_{1, ab}\cdots z_{n_8, ab}.$$
So each such composition contributes with $1$ dimension. Therefore, Eq.~\eqref{codim} gives the equality 
$$c_n^{(K, *)}(\mathbb M_{-1}^{\gamma_1})=\sum_{n_1+n_3+n_5+n_8=n}\binom{n}{n_1, n_3, n_5, n_8}=4^n.$$
\end{proof}

    Next, we will deal with the decomposition (\ref{cocharacter}) of $\chi_{\langle n\rangle}(\mathbb M_{-1}^{\gamma_1})$. 
    We observe that, by Remark~\ref{altura}, for every composition $\langle n \rangle=(n_1,\ldots ,n_8)$ of $n$ and every multipartition $\langle \lambda\rangle\vdash \langle n \rangle$, the corresponding Young multitableaux $T_{\langle \lambda\rangle}$ satisfies the following property: all the tableaux in $T_{\langle \lambda\rangle}$ have at most 1 row. 
    From the proof of Corollary \ref{cor:cod}, we obtain the following. If $n_2=n_4=n_6=n_7=0$ then, modulo $Id^{(K,*)}(\mathbb M_{-1}^{\gamma_1})$, there exists a unique highest weight vector that is not a $\ast$-identity, and hence $m_{\langle \lambda \rangle}=1$. In all other cases, $m_{\langle \lambda \rangle}=0$. This yields the following theorem.

    \begin{theorem}\label{thm:coc}
        Let $F$ be a field of characteristic zero and consider the decomposition $\chi_{\langle n\rangle}(\mathbb M_{-1}^{\gamma_1})=\displaystyle \sum_{\langle \lambda \rangle\vdash \langle n\rangle} m_{\langle \lambda \rangle}\chi_{\langle \lambda \rangle}$ of the $\langle n\rangle$-cocharacter of $\mathbb M_{-1}^{\gamma_1}$. Then, we have  $$ m_{\langle \lambda \rangle}=\begin{cases}
         1, & \, \text{if}\;\, n_2=n_4=n_6=n_7=0,\\
         0, & \, \text{otherwise}.
        \end{cases}$$
    \end{theorem}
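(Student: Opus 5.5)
The plan is to combine Theorem~\ref{multip-} with Theorem~\ref{multiplicity-}, so that the multiplicities become counts of linearly independent highest weight vectors modulo $I=Id^{(K,*)}(\mathbb M_{-1}^{\gamma_1})$. We order $K=\{1,a,b,ab\}$ as in Corollary~\ref{cor:cod}. With this ordering, $n_1,n_3,n_5,n_8$ count the variables $y_{\cdot,1},y_{\cdot,a},y_{\cdot,b},z_{\cdot,ab}$, and $n_2,n_4,n_6,n_7$ count the variables $z_{\cdot,1},z_{\cdot,a},z_{\cdot,b},y_{\cdot,ab}$.

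First, if some $n_j>0$ with $j\in\{2,4,6,7\}$, then every element of $P_{\langle n\rangle}$ contains one of the variables $z_{1,1},z_{1,a},z_{1,b},y_{1,ab}$. By Theorem~\ref{cor:klein} it therefore lies in $I$, so $P_{\langle n\rangle}(\mathbb M_{-1}^{\gamma_1})=0$ and all the multiplicities vanish.

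Now assume $n_2=n_4=n_6=n_7=0$. Each of the components $(M_2(F)^{(g)})^{+}$ for $g=1,a,b$ and $(M_2(F)^{(ab)})^{-}$ is one-dimensional (Table~\ref{decomposition-1}). Remark~\ref{altura} then forces $m_{\langle\lambda\rangle}=0$ whenever some $\lambda_i$ has more than one row. The only multipartition that can contribute is therefore $\langle\lambda\rangle=((n_1),\emptyset,(n_3),\emptyset,(n_5),\emptyset,\emptyset,(n_8))$, with all empty partitions in the vanishing positions. I would interpret the statement as asserting $m_{\langle\lambda\rangle}=1$ exactly for this one-row multipartition, and $0$ for all others.

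To establish this multiplicity, I would compare dimensions. The proof of Corollary~\ref{cor:cod} shows that $c_{\langle n\rangle}(\mathbb M_{-1}^{\gamma_1})=1$, with $P_{\langle n\rangle}(\mathbb M_{-1}^{\gamma_1})$ spanned by the monomial $(YZ)_{S,T,U,V}$. By Eq.~\eqref{cmultin-} we have $1=\sum m_{\langle\lambda\rangle}d_{\langle\lambda\rangle}$, and only the one-row multipartition can occur, with $d_{\langle\lambda\rangle}=1$. Hence its multiplicity equals $1$. Alternatively, I could check directly that the highest weight vector, which is essentially $y_{1,1}^{n_1}y_{1,a}^{n_3}y_{1,b}^{n_5}z_{1,ab}^{n_8}$ up to reordering modulo $I$, is not an identity: evaluation via Eq.~\eqref{eq:evaluation} gives $\pm\pi$ times a nonzero matrix.

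The only delicate point is matching the index convention, that is, which $n_i$ correspond to the vanishing components, with the table. Once that is fixed, the argument is a routine dimension count.
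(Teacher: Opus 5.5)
Your proposal is correct and follows essentially the paper's argument. Remark~\ref{altura} restricts to one-row multitableaux, and the one-dimensionality of $P_{\langle n\rangle}(\mathbb M_{-1}^{\gamma_1})$ from the proof of Corollary~\ref{cor:cod} then forces multiplicity $1$; you phrase this last step via $c_{\langle n\rangle}=\sum m_{\langle\lambda\rangle}d_{\langle\lambda\rangle}$ (Eq.~\eqref{cmultin-}) instead of the paper's count of non-identity highest weight vectors, which is an equivalent routine step. Your reading that the value $1$ refers only to the multipartition $((n_1),\emptyset,(n_3),\emptyset,(n_5),\emptyset,\emptyset,(n_8))$ matches the paper's intended meaning.
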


    \begin{remark}
       Although Corollary~\ref{cor:equal} does not address the $\langle n\rangle$-cocharacters for $(G, *)$-algebras with the same skew and symmetric homogeneous components, the same correspondence of variables induces a one-to-one correspondence between their multiplicities, after the corresponding components of the compositions of $n$ are interchanged. For instance, the symmetric and skew homogeneous components of $\mathbb M_{-1}^{\gamma_2}$ and $\mathbb M_{-1}^{\gamma_1}$ are interchanged only in the homogeneous degrees $a$ and $b$. Thus, the multiplicities for $\mathbb M_{-1}^{\gamma_2}$ are obtained directly from those of $\mathbb M_{-1}^{\gamma_1}$ by exchanging the corresponding components. In particular, for $\mathbb M_{-1}^{\gamma_2}$, we have
$$ m_{\langle \lambda \rangle}=\begin{cases}
         1, & \, \text{if}\;\, n_2=n_3=n_5=n_7=0,\\
         0, & \, \text{otherwise}.\end{cases}$$
         A similar result holds for $\mathbb M_{-1}^{\gamma_i}$ with $i=3, 4$.
    \end{remark}

\subsection{Central Polynomials} In this subsection we shall present a set of generators for $Id^{(K,*),z}(\mathbb M_{-1}^{\gamma_i})$ with $i=1, 2, 3, 4$. Thanks to Lemma~\ref{lem:id-equiv}, Corollary~\ref{cor:equal} and Remark~\ref{rem:id-equiv-klein}, we only need to deal with the case $i=1$. Let $W$ be the $T_{(K,*)}$-subspace generated by $y_{1, 1}$ and the polynomials in $I=Id^{(K, *)}(\mathbb M_{-1}^{\gamma_1})$.

\begin{lemma}\label{lem:central-klein}
 The set $W$ is closed under products and contains the polynomials $y_{1, a}y_{1, b}z_{1, ab}, y_{1, a}y_{2, a}, y_{1, b}y_{2, b}$ and $z_{1, ab}z_{2, ab}$. In particular, using the notation of Definition~\ref{def:notation}, for every $\alpha\in F$ the set $W$ contains the monomials 
$$\alpha\cdot (YZ)_{S, T, U, V},$$
for which $|T|, |U|$ and $|V|$ have the same parity.
\end{lemma}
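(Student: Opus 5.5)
The plan is to mirror the proof of Lemma~\ref{lem:U}, using the grading by $K$ and the key observation that, modulo $I$, every polynomial which is symmetric and homogeneous of degree $1$ lies in $W$.

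\textbf{Step 1: reduction to symmetric degree-$1$ elements.} Let $f$ be homogeneous of degree $1$ with $f^*\equiv f \pmod I$. Then $f$ is the image of $y_{1,1}$ under the graded $*$-endomorphism sending $y_{1,1}\mapsto f$. Hence $f\in W$. Likewise, anything skew of degree $1$ is a consequence of $z_{1,1}$, so it lies in $I\subseteq W$.

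\textbf{Step 2: closure under products.} Since $W=I+\langle y_{1,1}\rangle^{T_{(K,*)}}$ and $I$ is an ideal, it suffices to show that a product of two elements of the $T$-space generated by $y_{1,1}$ stays in $W$. By linearity this reduces to products $f_1f_2$ with each $f_i$ symmetric of degree $1$. Then $(f_1f_2)^*=f_2f_1$, and $[f_1,f_2]$ is skew of degree $1$, hence in $I$. Therefore $f_1f_2\equiv f_2f_1=(f_1f_2)^*$ modulo $I$, so $f_1f_2$ is symmetric of degree $1$ and lies in $W$ by Step 1.

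\textbf{Step 3: the four listed polynomials.} Each is homogeneous of degree $1$:
\begin{itemize}
\item $y_{1,a}y_{1,b}z_{1,ab}$ has degree $a\cdot b\cdot ab=1$;
\item $y_{1,a}y_{2,a}$, $y_{1,b}y_{2,b}$ and $z_{1,ab}z_{2,ab}$ have degree $g^2=1$.
\end{itemize}
For the squares, $(y_{1,g}y_{2,g})^*=y_{2,g}y_{1,g}$, and the commutator $[y_{1,g},y_{2,g}]$ is skew of degree $1$, hence lies in $I$. So $y_{1,g}y_{2,g}$ is symmetric modulo $I$. The same argument gives $(z_{1,ab}z_{2,ab})^*=z_{2,ab}z_{1,ab}\equiv z_{1,ab}z_{2,ab}$.

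For the triple product, $(y_{1,a}y_{1,b}z_{1,ab})^*=-z_{1,ab}y_{1,b}y_{1,a}$. Using the generators $y_{1,a}\circ y_{1,b}$ and $y_{1,g}\circ z_{1,ab}$ of $I$ from Theorem~\ref{thm:id-klein}, we anticommute the factors. Moving $z_{1,ab}$ past two $y$'s contributes $(-1)^2$, and swapping $y_{1,b},y_{1,a}$ contributes $-1$. The net effect is $-z\,y_b\,y_a\equiv y_a\,y_b\,z$. So this element is symmetric modulo $I$, and Step 1 applies.

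\textbf{Step 4: the monomials $(YZ)_{S,T,U,V}$.} Now take $(YZ)_{S,T,U,V}$ with $|T|,|U|,|V|$ all of the same parity. The factors $y_{s,1}$ are in $W$. We then use the relations in $I$ (commutation with $y_{\cdot,1}$ and anticommutation among the $a,b,ab$ variables) to regroup the monomial, up to sign modulo $I$, as a product of:
\begin{itemize}
\item pairs $y_{t,a}y_{t',a}$, $y_{u,b}y_{u',b}$ and $z_{v,ab}z_{v',ab}$;
\item if the common parity is odd, one additional triple $y_{t,a}y_{u,b}z_{v,ab}$.
\end{itemize}
Every factor lies in $W$ by Step 3, and $W$ is closed under products by Step 2. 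Hence $\alpha(YZ)_{S,T,U,V}\in W$.

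The main obstacle is the sign bookkeeping. Both the regrouping in Step 4 and the symmetry check for the triple involve anticommutation relations among the degree $a,b,ab$ variables, so the signs must be tracked carefully. Since only membership in $W$ up to a scalar matters, the sign is harmless once we know the regrouped product is equal to $\pm$ the original monomial modulo $I$.
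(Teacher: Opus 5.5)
Your proof is correct and follows the same route as the paper. You show membership in $W$ by showing an element is symmetric of degree $1$ modulo $I$, you prove closure under products, and you factor $(YZ)_{S,T,U,V}$ modulo $I$ into pairs plus, in the odd case, one triple; your sign checks just make explicit what the paper leaves implicit. One small fix in Step 1: since $f$ is only symmetric modulo $I$, the endomorphism sending $y_{1,1}\mapsto f$ is not a $*$-endomorphism; send $y_{1,1}$ to $\tfrac12(f+f^*)$ instead, and note that $f-\tfrac12(f+f^*)=\tfrac12(f-f^*)\in I$.
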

\begin{proof}
Recall that $W$ is the $T_{(K,*)}$-space generated by $I$ and $y_{1, 1}$. Therefore, $W$ is closed under products if any finite product of variables $y_{i, 1}$ belongs to $W$. The latter is true since any such product is again symmetric and of homogeneous degree 1, modulo $I$. Now observe that, modulo $I$, the polynomials $y_{1, a}y_{2, a},y_{1, b}y_{2, b}$ and $z_{1, ab}z_{2, ab}$ are symmetric and of homogeneous degree $1$, hence they belong to $W$. The same can be verified for $y_{1, a}y_{1, b}z_{1, ab}$. This completes the proof of the first statement. For the second statement observe that, modulo $I$, the monomials $\alpha\cdot (YZ)_{S, T, U, V}$ with the above restrictions are simply products of polynomials of the form $y_{i, 1}, y_{i, a}y_{j, b}z_{k, ab}, y_{i, a}y_{j, a}, y_{i, b}y_{j, b}$ and $z_{i, ab}z_{j, ab}$. The result then follows from the fact that $W$ contains $I$ and it is closed under products.
\end{proof}

We are ready to prove the following result.

\begin{theorem}\label{thm:klein-central}
      Let $F$ be a field of characteristic zero. Then  $Id^{(K,*),z}(\mathbb M_{-1}^{\gamma_1})=W$.
\end{theorem}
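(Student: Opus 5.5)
The inclusion $W\subseteq Id^{(K,*),z}(\mathbb M_{-1}^{\gamma_1})$ is immediate: $y_{1,1}$ is evaluated in $F(e_{11}+e_{22})$, which is central by Remark~\ref{rem:center}, and $W$ is the $T_{(K,*)}$-space generated by $y_{1,1}$ and $I$. For the converse, fix a central polynomial $P$, which we may assume multilinear of degree $n$ since $\mathrm{char}\,F=0$. Modulo $I\subseteq W$ it stays central. By the reduction in the proof of Theorem~\ref{thm:id-klein}, we may assume
$$P=\sum \alpha_{S,T,U,V}(YZ)_{S,T,U,V}.$$
By Lemma~\ref{lem:central-klein}, every monomial with $|T|\equiv|U|\equiv|V|\pmod 2$ already lies in $W$. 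We subtract these, so that only monomials whose parity vector $(e_T,e_U,e_V)$ is not constant remain. In the notation of that proof, these are exactly the pieces $P_m$ with $m\neq 0,7$.

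Next we evaluate at the generic admissible substitution used in the proof of Theorem~\ref{thm:id-klein} and apply \eqref{eq:evaluation}. With $Q_m$ the evaluation of $(P_{m,+}-P_{m,-})^C$, the image of $P$ is
$$(e_{12}+e_{21})Q_1+(e_{11}-e_{22})Q_2+(e_{21}-e_{12})Q_3+(e_{12}-e_{21})Q_4+(e_{22}-e_{11})Q_5+(e_{12}+e_{21})Q_6.$$
Centrality means this is a scalar matrix. The $e_{12}$ and $e_{21}$ coefficients must vanish, and the $e_{11}$ and $e_{22}$ coefficients must coincide. This gives the linear system
$$Q_1-Q_3+Q_4+Q_6=0,\qquad Q_1+Q_3-Q_4+Q_6=0,\qquad Q_2-Q_5=-(Q_2-Q_5).$$
Since $\mathrm{char}\,F\neq 2$, we get $Q_1+Q_6=0$, $Q_3-Q_4=0$ and $Q_2-Q_5=0$.

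From here we argue exactly as in the proof of Theorem~\ref{thm:id-klein}. Lemma~\ref{lem:vanish} turns each relation into a vanishing commutative polynomial, such as $((P_{1,+}-P_{1,-})+(P_{6,+}-P_{6,-}))^C=0$. The parity classes make distinct monomials have distinct commutative images, so Lemma~\ref{lem:com} applies. The absence of cancellation between the $\pm$ parts and between different $m$ then forces $P_m=0$ for $m\in\{1,\dots,6\}$. Hence $P$ is a sum of monomials already in $W$, so $P\in W$.

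The main obstacle is the last step: confirming that Lemma~\ref{lem:com} applies to the combined polynomials. A multilinear monomial $(YZ)_{S,T,U,V}$ is determined by $(S,T,U,V)$, so distinct ones have distinct commutative images. The signs $(-1)^{\lfloor |V|/2\rfloor}$ only separate $P_{m,+}$ from $P_{m,-}$ and cause no cancellation, since those pieces have disjoint monomial supports. So the argument should carry over verbatim.
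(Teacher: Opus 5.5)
Your proposal is correct and follows the paper's proof essentially step for step. You reduce modulo $I$ to combinations of the monomials $(YZ)_{S,T,U,V}$, discard $P_0$ and $P_7$ using Lemma~\ref{lem:central-klein}, use the generic evaluation and centrality to get $Q_1+Q_6=Q_3-Q_4=Q_2-Q_5=0$, and then conclude $P_1=\cdots=P_6=0$ via Lemmas~\ref{lem:vanish} and~\ref{lem:com}, exactly as in the proof of Theorem~\ref{thm:id-klein}.
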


\begin{proof}
We directly verify that any element of $W$ is central. Let $P$ be a central polynomial. Since $F$ has characteristic zero, we can suppose that $P$ is multilinear of degree $n$. It is clear that any polynomial $P'$ with $P\equiv P'\pmod I$ is also central. In particular, we can suppose that $P=P_0+\cdots+P_7$, where the monomials appearing in each $P_i$ are restricted as in the proof of Theorem~\ref{thm:id-klein}. We further write $P_{i}=P_{i, +}+P_{i, -}$ as in the proof of 
Theorem~\ref{thm:id-klein}. From Lemma~\ref{lem:central-klein}, we clearly have $P_0, P_7\in W$. Hence we can assume, without loss of generality, that $P_0=P_7=0$. In particular, it suffices to prove that $P$ vanishes.

Take $y_{i, 1}=a_i(e_{11}+e_{22}), z_{i, 1}=0, y_{i, a}=b_i(e_{12}+e_{21}), z_{i, a}=0, y_{i, b}=c_i(e_{11}-e_{22}), z_{i, b}=0, y_{i, ab}=0, z_{i, ab}=d_i(e_{12}-e_{21})$, where the $a_i, b_i, c_i, d_i$'s are arbitrary elements of $F$. If $R_i$ denotes the image of $(P_{i, +}- P_{i, -})^C$ at the elements $a_i, b_i, c_i, d_i$, as in the proof of Theorem~\ref{thm:id-klein}, we obtain the following:
\begin{align*}R&=(e_{12}+e_{21})R_1+(e_{11}-e_{22})R_2+(e_{21}-e_{12})R_3\\{}&+(e_{12}-e_{21})R_4+(e_{22}-e_{11})R_5+(e_{12}+e_{21})R_6.\end{align*}
From hypothesis, $R$ is a central element of $M_2(F)$ and so Remark~\ref{rem:center} implies that $R=\beta\cdot (e_{11}+e_{22})$ for some $\beta\in F$. The latter yields the following system of equations:
$$\begin{cases}
R_2-R_5=-R_2+R_5\\
R_1-R_3+R_4+R_6=0\\
R_1+R_3-R_4+R_6=0.
\end{cases}$$
Since $F$ has characteristic zero, we obtain $R_2-R_5= R_1+R_6=R_3-R_4=0$. As in the proof of Theorem~\ref{thm:id-klein}, the latter implies that $P_i=0$ for every $1\le i\le 6$. Thus $P=0$, concluding the proof.
\end{proof}

As in the study of polynomial identities for $Id^{(K,*)}(\mathbb M_{-1}^{\gamma_i})$, Lemma~\ref{lem:id-equiv} and Theorem~\ref{thm:klein-central} imply the following result. 
\begin{theorem}\label{thm:central-gamma_i}
    Let $F$ be a field of characteristic zero. Then $$Id^{(K,*),z}(\mathbb M_{-1}^{\gamma_i})=\langle y_{1, 1}, Id^{(K,*)}(\mathbb M_{-1}^{\gamma_i})\rangle^{T_{(K, *)}}, i=1, 2, 3, 4.$$
\end{theorem}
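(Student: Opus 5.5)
For $i=1$ there is nothing new to do. Theorem~\ref{thm:klein-central} gives $Id^{(K,*),z}(\mathbb M_{-1}^{\gamma_1})=W$, and $W$ is by definition the $T_{(K,*)}$-subspace generated by $y_{1,1}$ and $Id^{(K,*)}(\mathbb M_{-1}^{\gamma_1})$. For $i=2,3,4$ I will transport this description through Lemma~\ref{lem:id-equiv}, taking $A_1=\mathbb M_{-1}^{\gamma_1}$ and $A_2=\mathbb M_{-1}^{\gamma_i}$.

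By Remark~\ref{rem:id-equiv-klein}, the hypotheses of Lemma~\ref{lem:id-equiv} hold for each such pair, and the map $g\mapsto\varepsilon_g$ is a group homomorphism. Items (b) and (d) are therefore available. Item (d) applied to
\[
Id^{(K,*),z}(\mathbb M_{-1}^{\gamma_1})=\langle y_{1,1},\, Id^{(K,*)}(\mathbb M_{-1}^{\gamma_1})\rangle^{T_{(K,*)}}
\]
gives
\[
Id^{(K,\star),z}(\mathbb M_{-1}^{\gamma_i})=\langle \varphi(y_{1,1}),\, \varphi(Id^{(K,*)}(\mathbb M_{-1}^{\gamma_1}))\rangle^{T_{(K,\star)}}.
\]
Here we need the generating set of item (d) to be allowed to be infinite, or else replace it by the finite generators of Theorem~\ref{cor:klein} together with $y_{1,1}$. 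The argument of (d), namely conjugation of $(K,*)$-endomorphisms by $\varphi$, works verbatim for any generating set.

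Two identifications then finish the proof.

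1. From Table~\ref{decomposition-1}, the symmetric component of degree $1$ is $F(e_{11}+e_{22})$ for every $\gamma_i$. Hence $\varepsilon_1=0$ and $\varphi(y_{1,1})=\bar y_{1,1}$.
2. By item (a), $\varphi(Id^{(K,*)}(\mathbb M_{-1}^{\gamma_1}))=Id^{(K,\star)}(\mathbb M_{-1}^{\gamma_i})$.

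Substituting both into the display above gives exactly the claimed equality.

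The only point needing care is the justification of item (d), whose proof the paper only sketches. I would check it directly. If $\alpha$ is a $(K,*)$-endomorphism, then $\beta=\varphi\alpha\varphi^{-1}$ is a $(K,\star)$-endomorphism, by the computation in (b) using $\varphi(f^*)=(-1)^{\varepsilon_g}\varphi(f)^\star$. Since $\varphi$ is linear and bijective, it carries the span of all endomorphic images of the generators onto the span of all endomorphic images of their $\varphi$-images. So $\varphi$ maps $T_{(K,*)}$-subspaces generated by a set $S$ to $T_{(K,\star)}$-subspaces generated by $\varphi(S)$. Combined with item (c), this yields the result.
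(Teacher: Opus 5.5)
Your proposal is correct and is essentially the paper's argument. Theorem~\ref{thm:klein-central} handles $\gamma_1$, and Lemma~\ref{lem:id-equiv} together with Remark~\ref{rem:id-equiv-klein} transports it to $\gamma_2,\gamma_3,\gamma_4$; since $\varepsilon_1=0$, $y_{1,1}$ is fixed. Your check that the conjugation argument of item (d) works for arbitrary generating sets makes explicit what the paper's one-line proof leaves implicit.

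One caution: drop the parenthetical alternative of using only the finite generators of Theorem~\ref{cor:klein} together with $y_{1,1}$ as $T_{(K,*)}$-space generators. That $T_{(K,*)}$-space is strictly smaller than $W$; for instance, it meets $(\mathcal{F}^{(a)})^+$ trivially and so misses the identity $[y_{2,a},z_{1,1}]$.
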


\begin{remark}
    Following Remark~\ref{rem:homo-klein} and the proof of Theorem~\ref{thm:klein-central}, we also conclude that Theorem~\ref{thm:klein-central} (hence Theorem~\ref{thm:central-gamma_i})  holds for any infinite field of characteristic $\ne 2$. 
\end{remark}

As a consequence of Theorem~\ref{thm:klein-central}, we obtain exact formulas for the central and proper central codimensions.

\begin{proposition}
For every $n\ge 1$ and every $i\in \{1, 2, 3, 4\}$, we have $c_{n}^{(K,*),\delta}( \mathbb M_{-1}^{\gamma_i})=4^{n-1}$ and $c_{n}^{(K,*),z}( \mathbb M_{-1}^{\gamma_i})=3\cdot 4^{n-1}$.     
\end{proposition}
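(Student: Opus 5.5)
By Corollary~\ref{cor:equal} and Remark~\ref{rem:id-equiv-klein}, it suffices to treat $i=1$. Since $c_n^{(K,*)}=c_n^{(K,*),z}+c_n^{(K,*),\delta}$ and Corollary~\ref{cor:cod} gives $c_n^{(K,*)}(\mathbb M_{-1}^{\gamma_1})=4^n$, it is enough to show $c_n^{(K,*),z}(\mathbb M_{-1}^{\gamma_1})=3\cdot 4^{n-1}$. We compute the central codimension composition by composition, using Remark~\ref{rmk:codimcentral}.

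Fix a composition $\langle n\rangle=(n_1,\ldots,n_8)$. If any of $n_2,n_4,n_6,n_7$ is nonzero, then $P_{\langle n\rangle}\subseteq Id^{(K,*)}$, so $c^z_{\langle n\rangle}=c^\delta_{\langle n\rangle}=0$. Otherwise, the proof of Corollary~\ref{cor:cod} shows that $P_{\langle n\rangle}$ modulo identities is spanned by the single monomial $(YZ)_{S,T,U,V}$ with $|S|=n_1$, $|T|=n_3$, $|U|=n_5$, $|V|=n_8$. This monomial is nonzero modulo $I$, and its evaluation in \eqref{eq:evaluation} is a nonzero scalar times $(e_{12}+e_{21})^{e_T}(e_{11}-e_{22})^{e_U}(e_{12}-e_{21})^{e_V}$. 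Computing this product for each parity vector $(e_T,e_U,e_V)$, it is a multiple of the identity exactly when $e_T=e_U=e_V$. This matches Lemma~\ref{lem:central-klein} and Theorem~\ref{thm:klein-central}, where $P_0,P_7\in W$ while $P_1,\dots,P_6$ must vanish. Hence $c^\delta_{\langle n\rangle}=1$ if $n_3,n_5,n_8$ all have the same parity, and $c^z_{\langle n\rangle}=1$ otherwise.

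Thus
$$c_n^{(K,*),\delta}=\sum_{n_1+n_3+n_5+n_8=n,\; n_3\equiv n_5\equiv n_8 \ (2)}\binom{n}{n_1,n_3,n_5,n_8}.$$
To evaluate it, we use the character filter: this is the number of words of length $n$ over the alphabet $\{1,a,b,ab\}$ whose counts of $a$, $b$, $ab$ have equal parities. With $F(x,y,w)=(1+x+y+w)^n$, the condition $n_3\equiv n_5$, $n_5\equiv n_8$ is detected by
$$\tfrac14\sum_{\epsilon,\delta\in\{\pm1\}}F(\epsilon,\epsilon\delta,\delta).$$
The four terms are $4^n$, $(1-1-1+1)^n=0$, $(1+1-1-1)^n=0$ and $(1-1+1-1)^n=0$, so the sum equals $4^{n-1}$ for $n\ge1$. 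Then $c^z_n=4^n-4^{n-1}=3\cdot4^{n-1}$.

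The main obstacle is the per-composition centrality claim, namely checking that a single surviving monomial is central precisely in the equal-parity cases. This is exactly the content of the system solved in the proof of Theorem~\ref{thm:klein-central}, so it reduces to a short check of the eight matrix products in \eqref{eq:evaluation}. The only other subtle point is $n\ge1$: it ensures the three nontrivial character sums vanish, since $0^n=0$.
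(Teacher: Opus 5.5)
Your proof is correct and takes essentially the same route as the paper. You reduce to $\gamma_1$ and show that $P^{\delta}_{\langle n\rangle}$ is one-dimensional exactly when $n_2=n_4=n_6=n_7=0$ and $n_3,n_5,n_8$ have equal parity; your direct check of the eight products $(e_{12}+e_{21})^{e_T}(e_{11}-e_{22})^{e_U}(e_{12}-e_{21})^{e_V}$ makes explicit what the paper cites from the proof of Theorem~\ref{thm:klein-central}, and your four-term filter $\frac14\sum_{\epsilon,\delta}F(\epsilon,\epsilon\delta,\delta)$ is just the paper's $\frac18\sum_{e_2,e_3,e_4}(1+e_2e_3e_4)(1+e_2+e_3+e_4)^n$ with the zero-weight terms ($e_2e_3e_4=-1$) dropped.
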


\begin{proof}
From Corollary~\ref{cor:equal} and Remark~\ref{rem:id-equiv-klein}, it suffices to consider the case $i=1$. Observe that 
$c_{n}^{(G,*),z}( \mathbb M_{-1}^{\gamma_1})=c_{n}^{(G,*)}( \mathbb M_{-1}^{\gamma_1})-c_{n}^{(G,*),\delta}( \mathbb M_{-1}^{\gamma_1})$. In particular, from Corollary~\ref{cor:cod}, it suffices to prove that $c_{n}^{(G,*),\delta}( \mathbb M_{-1}^{\gamma_1})=4^{n-1}$. Let $\langle n\rangle =n_1+\cdots+n_8$ be a composition of $n$. Following the proof of Theorem~\ref{thm:klein-central}, we observe that $P_{\langle n \rangle}^{\delta}(\mathbb M_{-1}^{\gamma_1})$ is the null space unless $n_2=n_4=n_6=n_7=0$ and $n_3, n_5, n_8$ have the same parity. Moreover, in this case, we see that the corresponding space is generated by the polynomial 
$$y_{1, 1}\cdots y_{n_1, 1}\cdot y_{1, a}\cdots y_{n_3, a}\cdot y_{1, b}\cdots y_{n_5, b}\cdot z_{1, ab}\cdots z_{n_8, ab}.$$
In conclusion, $c_{\langle n \rangle}^{\delta}( \mathbb M_{-1}^{\gamma_1})=1$ if $n_2=n_4=n_6=n_7=0$ and $n_3, n_5, n_8$ have the same parity and, in every other case, this quantity equals $0$. Thus, using Remark~\ref{rmk:codimcentral}, we obtain
$$c_{n}^{(G,*),\delta}( \mathbb M_{-1}^{\gamma_1})=\sum_{n_1+n_3+n_5+n_8=n\atop n_3\equiv n_5\equiv n_8\pmod 2}\binom{n}{n_1, n_3, n_5, n_8}.$$
We shall obtain a closed formula for this last sum. We clearly have the following equality in the commutative ring $\mathbb R[X_1, X_2, X_3, X_4]$: 
$$(X_1+X_2+X_3+X_4)^n=\sum_{a+b+c+d=n}\binom{n}{a, b, c, d}X_1^aX_2^bX_3^cX_4^d.$$
Moreover, for integers $b, c, d\ge 0$, we have 
$$\sum_{e_2, e_3, e_4\in \{\pm 1\}}(1+e_2e_3e_4)e_2^be_3^ce_4^d=\begin{cases}
8,&\, \text{if}\;\; b\equiv c\equiv d\pmod 2,\\
0,&\, \text{otherwise.}
\end{cases}$$
Combining these observations, we easily obtain
\begin{align*}
    R(X_1, X_2, X_3, X_4)&:=\sum_{e_2, e_3, e_4\in \{\pm 1\}} (1+e_2e_3e_4)(X_1+e_2X_2+e_3X_3+e_4X_4)^n\\{}&=8\sum_{a+b+c+d=n\atop b\equiv c\equiv d\pmod 2}\binom{n}{a, b, c, d}X_1^aX_2^bX_3^cX_4^d.
\end{align*}
Evaluating the last identity at $X_1=X_2=X_3=X_4=1\in \mathbb R$ we get
$$c_{n}^{(G,*),\delta}( \mathbb M_{-1}^{\gamma_1})=\frac{1}{8}R(1,1, 1, 1)=\frac{1}{8}\sum_{e_2, e_3, e_4\in \{\pm 1\}} (1+e_2e_3e_4)(1+e_2+e_3+e_4)^n=4^{n-1}.$$

\end{proof}

\subsection*{Declaration of competing interest}

The authors declare that they have no known competing financial interests or personal relationships that could have appeared to influence the work reported in this paper.


\end{document}